\documentclass[11pt]{article}

\usepackage[utf8]{inputenc}

\usepackage[usenames,dvipsnames]{xcolor}
\usepackage[margin=1.2in]{geometry}
\usepackage{graphicx}
\usepackage{amsmath, amsthm, pxfonts}
\usepackage{appendix}
\usepackage{caption}
\usepackage{subcaption, subfig}


\numberwithin{equation}{section}
\newtheorem{lem}{Lemma}[section]
\newtheorem{theo}{Theorem}[section]
\newtheorem{prop}{Proposition}[section]

\newtheorem{dfn}{Definition}[section]
\newtheorem{asmp}{Assumption}[section]
\newtheorem{cond}{Condition}[section]
\newtheorem{exam}{Example}[section]

\usepackage{amsmath, bbm, accents}
\usepackage{amsfonts}
\newcommand{\bbR}{\mathbb{R}}

\newcommand{\bbE}{\mathbb{E}}
\newcommand{\bbF}{\mathbb{F}}

\newcommand{\eps}{Z}

\newcommand{\voly}{\sigma^Y}

\newcommand{\calF}{\mathcal{F}}

\newcommand{\bbP}{\mathbb{P}}
\newcommand{\bbQ}{\mathbb{Q}}
\newcommand{\E}[2][]{\mathbb{E}_{#1}\left[#2\right]}

\newcommand{\crochet}[1]{\left\langle{#1}\right\rangle}
\newcommand{\accro}[1]{\left\{{#1}\right\}}
\newcommand{\paren}[1]{\left( {#1}\right)}

\newcommand{\abs}[1]{\left| {#1} \right|}

\newcommand{\ul}{\overline{l}}
\newcommand{\dl}{\underline{l}}

\title{Optimal discretization of hedging strategies\\
with directional views}
\author{Jiatu Cai$^{1}$, Masaaki Fukasawa$^{2}$, Mathieu Rosenbaum$^{3}$ and Peter Tankov$^{1,4}$
\\[0.4cm]
{\normalsize
$^1$ Laboratoire de Probabilit\'es et Mod\`eles Al\'eatoires,} \\ {\normalsize
    Universit\'e Paris Diderot (Paris 7)} \\
$~~$\\
{\normalsize
    $^2$ Department of Mathematics,} \\ {\normalsize
    Osaka University} \\ 
$~~$\\
{\normalsize $^3$ Laboratoire de Probabilit\'es et Mod\`eles Al\'eatoires,} \\ {\normalsize
    Universit\'e Pierre et Marie Curie (Paris 6)}\\    
$~~$\\
{\normalsize $^4$ International Laboratory of Quantitative Finance,} \\ {\normalsize
     National Research University Higher School of Economics, Moscow}
}

\date{\small{\textsf{\today}}}

\begin{document}
\maketitle
\begin{abstract}
\noindent We consider the hedging error of a derivative due to discrete trading in the presence of a drift in the dynamics of the underlying asset. We suppose that the trader wishes to find rebalancing times for the hedging portfolio which enable him to keep the discretization error small while taking advantage of market trends. 
Assuming that the portfolio is readjusted at high frequency,
we introduce an asymptotic framework in order to derive optimal discretization
strategies. More precisely, we formulate the optimization problem in terms of an
asymptotic expectation-error criterion.
In this setting, the optimal rebalancing times are given by the
hitting times of two barriers whose values can be obtained by solving a linear-quadratic optimal control problem. In specific
contexts such as in the Black-Scholes model, explicit expressions for the optimal rebalancing times can be derived.
\\

\noindent \textbf{Key words:}\ {Discretization of hedging strategies, delta hedging, hitting times, asymptotic optimality, expectation-error criterion, semi-martingales, limit theorems, linear-quadratic optimal control.}

\end{abstract}
\section{Introduction}

In order to manage the risks inherent to the derivatives they buy and
sell, practitioners use continuous time stochastic models to compute
their prices and hedging portfolios. In the simplest cases, notably in
that of the so-called delta hedging strategy, the hedging portfolio
obtained from the model is a time varying self financed combination of cash
and the underlying of the option. We denote the price at time $t$ of the underlying
asset by $Y_t$ and assume it to be a one-dimensional
semi-martingale. Hence, in such situations, the outputs
of the model are the price of the option together with the number of underlying assets to hold in the hedging portfolio at any time $t$, denoted by $X_t$ (the weight in cash is then deduced from the self financing property). Therefore, assuming zero interest rates, the theoretical value of the model based hedging portfolio at the maturity of the option $T$ is given by
$$\int_0^T X_tdY_t.$$

\noindent Typically, the process $X_t$ derived from the model is a continuously varying semi-martingale, requiring continuous trading to be implemented in practice. This is of course physically impossible and would be anyway irrelevant because of the costs induced by microstructure effects. Hence practitioners do not use the strategy $X_t$, but rather a discretized version of it. This means the hedging portfolio is only rebalanced at some discrete times and thus is held constant between two rebalancing times. Let us denote by $(\tau^n_j)_{j\geq 0}$ an increasing sequence of rebalancing times over $[0,T]$ (the meaning of the parameter $n$ will be explained below). With respect to the target portfolio obtained from the model, the hedging error due to discrete trading $Z_T^n$ is therefore given by
\begin{equation*}
Z^n_T = \sum_{j=0}^{+\infty}X_{\tau^n_j}(Y_{\tau^n_{j+1}\wedge T} - Y_{\tau^n_{j}\wedge T})- \int_0^TX_tdY_t.
\end{equation*}
Thus, some important questions in practice are:
\begin{itemize}
\item What is the order of magnitude of $Z^n_T$ in the case of classical discretization strategies ? 
\item For a given criterion, how to optimize the rebalancing times ?
\end{itemize}

\noindent The most classical rebalancing scheme is that of equidistant trading dates of the form 
$$\tau_j^n=jT/n,~j=0,\ldots,n,$$
where $n$ represents the total number of trades on the period $[0,T]$. In this setting, the
first question has been addressed in details. There are two popular approaches to quantify the hedging error $Z^n_T$, both of them being asymptotic, assuming the rebalancing frequency $n/T$ tends to infinity (that is $n$ tends to infinity since $T$ is fixed). A first possibility is to use the $L^2$ norm, where one typically looks for asymptotic bounds of the form
\begin{equation*}
\bbE[(Z^n_T)^2]\leq cn^{-{\theta}},\quad n\to \infty.
\end{equation*}
Many authors have explored various aspects of this problem in this deterministic rebalancing dates framework. For European call and put options
in the Black-Scholes model, it is shown in \cite{bertsimas2000time} and \cite{zhang1999couverture} that the $L^2$ error has a convergence
rate $\theta = {1}$. For other options, the convergence rate
depends on the regularity of the payoff. For example, it is
shown in \cite{gobet2001discrete} that for binary options, the convergence rate is $\theta=1/2$. However, in
this context, the convergence rate $\theta = 1$ can be achieved by choosing a suitable non equidistant deterministic rebalancing grid defined by 
$$
\tau^n_j = T - T\paren{1 - j/n}^{1/\beta},
$$
with $\beta \in (0, 1]$ being the fractional smoothness in the Malliavin
sense of the option payoff, see \cite{geiss2002quantitative}. An
asymptotic lower bound for the $L^2$ error is given in
\cite{fukasawa2011asymptotically, fukasawa2012efficient} for a
general class of rebalancing schemes. \\ 

\noindent The second way to assess the hedging error is through
the weak convergence of the sequence of the suitably rescaled random variables $Z^n_T$. When $X$ and $Y$ are It$\hat{\text{o}}$ processes, the case of equidistant rebalancing dates has been investigated in this approach in \cite{bertsimas2000time,hayashi2005evaluating,rootzen1980limit}, where the following convergence in law is proved:
\begin{equation}
\sqrt{n}Z^n_T\xrightarrow{\mathcal{L}}{} \sqrt{\frac{{T}}{2}}\int_0^T\sigma^X_t\sigma^Y_tdB_t, \label{hm.eq}
\end{equation}
where $\sigma^X$ and $\sigma^Y$ are the volatilities of $X$ and $Y$ and $B$ is a Brownian motion independent of the other quantities.
The case where $X$ and $Y$ are processes with jumps is treated in \cite{tankov2009asymptotic}.\\

\noindent This asymptotic approach has also been recently used in the
context where the rebalancing times are random stopping times. Some
specific hitting times based schemes derived from a microstructure
model are investigated in \cite{robert2010microstructural}. In
\cite{fukasawa2011discretization}, the author works with quite general
sampling schemes based on stopping times. More precisely, for a given parameter $n$ driving the asymptotic, one considers an increasing sequence of stopping times
$$0=\tau_0^n\leq\tau_1^n\leq\ldots\leq\tau_j^n\leq\ldots$$ 
so that almost surely,
$\underset{j\rightarrow\infty}{\text{lim}}\tau_j^n=T$ (meaning in fact that the stopping times are all equal to $T$ for large enough $j$)
and $$ \sup_j (\tau^n_{j+1}-\tau^n_j)$$
tends to $0$ in a suitable sense as $n$ goes to infinity.
Under some regularity conditions on the (random) rebalancing times, a general limit theorem for the hedging error is obtained in \cite{fukasawa2011discretization}. It is shown that after suitable renormalization (specified in the next sections), the hedging error converges in law to a random variable of the form
\begin{equation}\label{eqn: stable limit fukasawa}
\frac{1}{3}\int_0^T s_tdY_t + \frac{1}{\sqrt{6}}\int_0^T \big(a_t^2 - \frac{2}{3}s_t^2\big)^{1/2}\sigma^Y_tdB_t.
\end{equation}
Here $B$ is a Brownian motion independent of all the other quantities
and the processes $s$ and $a$ can be interpreted as the
asymptotic local conditional skewness and kurtosis of the increments of the
process $X$ between two consecutive discretization dates (see next sections for details).\\

\noindent One can remark a crucial difference between the deterministic discretization schemes
associated to \eqref{hm.eq} and the random stopping times case leading to \eqref{eqn: stable limit fukasawa}. For deterministic
dates, the discretization error asymptotically behaves as a stochastic integral with respect to Brownian motion. Therefore, it is (essentially) centered.
In the case of random discretization dates, one may obtain a ``biased'' asymptotic hedging error because of the presence of the term
$$\int_0^T s_tdY_t.$$
Hence, if $s$ does not vanish and $Y$ has non zero drift, the asymptotic hedging error is no longer centered.\\

\noindent From a practitioner viewpoint, this is quite an interesting
property. Indeed, it shows that in the presence of market trends,
the trader may actually be compensated for the extra risk arising from
discrete trading, provided that the rebalancing dates are chosen in an
appropriate way.
Of course one may say this is not the
option trader's job to try to get a positive expected return with the
hedging strategy. However, knowing that there is anyhow a hedging
error, it seems reasonable to optimize it to the trader's benefit.\\

\noindent Hence we place ourselves in the asymptotic high frequency regime where $n$ is large and therefore
$$\underset{j}{\text{sup}}(\tau_{j+1}^n-\tau_j^n)$$
is small, meaning that the hedging error should be small. In this setting we address the second question raised above, that is finding the optimal times to rebalance the portfolio. To do so, we simply use an asymptotic expectation-error type criterion. More precisely, we wish to maximize the expectation of the hedging error under a constraint on its $L^2$ norm. This is quite in the spirit of \cite{sepp2013you}, where the author aims at finding an optimal hedging frequency to optimize the Sharpe ratio. Remark that in our context, the $L^2$ norm is more meaningful than the variance since the primary goal of the trader is to make the hedging error small. Our asymptotic approach goes as follows. First, we approximate the law of the renormalized hedging error by that in Equation \eqref{eqn: stable limit fukasawa}. Then we find the processes $a_t^*$ and $s_t^*$ which correspond to optimality in terms of our expectation-error criterion for the family of laws given by \eqref{eqn: stable limit fukasawa}. Finally, we show that we can indeed build a discretization rule which leads to the optimal $a_t^*$ and $s_t^*$ in the limiting distribution of the hedging error.\\

\noindent Note that using an asymptotic framework to design optimal discretizations of hedging strategies has been a quite popular approach in the recent years. Such method (although in a slightly different context) is in particular used in \cite{fukasawa2011asymptotically,fukasawa2011discretization,gobet2012almost} in the continuous setting whereas the case with jumps is investigated in \cite{rosenbaum2011asymptotically}. All these works aim at minimizing some form of transaction costs (typically the number of trades) under some constraint on the $L^2$ norm of the hedging error. Here we also put a constraint on the $L^2$ norm of the hedging error. However, instead of minimizing transaction costs, we maximize the expectation of the hedging error. Thus our viewpoint is that of a trader giving himself a lower bound on the quality of his hedge (the $L^2$ norm of the hedging error), but allowing himself to try to take advantage of market drifts provided the constraint is satisfied.\\

\noindent In practice, our work should probably only be considered as a benchmark. Indeed, we somehow make the highly unreasonable assumption that practitioners observe the drift. This is of course not realistic at all since any kind of statistical estimation of the drift is irrelevant in this high frequency setting. However, some practitioners still have views on the market and our work gives them a way to incorporate their beliefs in their hedging strategies.\\

\noindent The paper is organized as follows. In Section \ref{sec : framework},
we investigate the set of admissible discretization rules, that is those leading to a limiting law of the form \eqref{eqn: stable limit fukasawa}. In particular, we extend the examples provided in \cite{fukasawa2011discretization} by showing that the discretization rules based on hitting times of stochastic barriers are admissible. In Section \ref{sec: optimality}, we 
consider a first criterion for optimizing the trading times: the modified Sharpe ratio. It enables us to carry out very simple computations. However, the relevance of the modified Sharpe ratio being
in fact quite arguable, a more suitable approach in which we consider an expectation-error type criterion is investigated in Section \ref{sec: optimality2}. Using tools from linear-quadratic optimal control theory, explicit developments are provided in the Black-Scholes model in Section \ref{sec: bs}. Finally, the longest proofs are relegated to an appendix.

\section{Assumptions and admissible strategies}\label{sec : framework}
In this section we detail our assumptions on the processes $X$ and $Y$ together with the admissibility conditions
for the sampling schemes.
\subsection{Assumptions on the dynamics and admissibility conditions}
Let $(\Omega, \calF, \bbF, \bbP)$ be a filtered probability space. We
write $Y$ for the underlying asset. 
Let $T > 0$ stand for the maturity of the derivative to be hedged.
We assume that the benchmark
hedging strategy deduced from a theoretical model simply consists in
holding a certain number of units of the underlying asset, denoted by
$X$, and some cash in a self financed way, under zero interest rates.
Throughout the paper, we assume both $Y$ and $X$ are It$\hat{\text{o}}$ processes of the form
\begin{equation} \label{eq:itorep}
dY_t = b^Y_tdt + \sigma^Y_t dW^Y_t,~~dX_t = b^X_tdt + \sigma^X_t dW^X_t
\end{equation}
on $[0,T]$, 
where $W^X$ and $W^Y$  are  $\mathbb F$-Brownian motions which may be
arbitrarily correlated, and
 the coefficients of $X$ and $Y$ satisfy the following technical assumptions.
\begin{asmp}\label{assump_model} 
${}$
\begin{itemize}
\item The processes $b^Y$, $b^X$, $\sigma^Y$  and  $\sigma^X$ are
      adapted and  continuous  on $[0,T]$ almost surely.
\item The volatility process $\sigma^Y$ of $Y$  is positive  on $[0,T]$ almost
      surely.
\item The volatility process $\sigma^X$ of $X$  is positive  on $[0,T)$ almost
      surely.
\item The instantaneous Sharpe ratio $\rho = b^Y/\sigma^Y$ satisfies
\begin{equation*}
\mathbb{E}\big[\int_0^T\rho_t^2dt\big]<+\infty.
\end{equation*}
\end{itemize}
\end{asmp}
\begin{exam}[The Black-Scholes model]\label{example : bs} \upshape
The case that $b^Y_t = b Y_t$ and $\sigma^Y_t = \sigma Y_t$ with
 constants $b$ and  $\sigma > 0$ corresponds to the Black-Scholes
 model. The instantaneous Sharp ratio $\rho = b/\sigma$ is a
 constant. To hedge a call option with payoff $(Y_T-K)_+$ and strike $K > 0$, 
the standard theory suggests to use the so-called Delta hedging strategy:
\begin{equation*}
X_t = \Phi(d_1(t,Y_t)), \ \ d_1(t,y) = \frac{\log(y/K) + \sigma^2(T-t)/2}{\sigma\sqrt{T-t}},
\end{equation*}
where $\Phi$ stands for the distribution function of a standard Gaussian random variable.
By It$\hat{\text{o}}$'s formula, we see that $X$ is an
 It$\hat{\text{o}}$ process of the form \eqref{eq:itorep} with  
$W^X = W^Y$ and
\begin{equation*}
\begin{split}
& b^X_t = \phi(d_1(t,Y_t)) \left\{
\frac{\partial d_1}{\partial t}(t,Y_t) + \frac{\sigma^2}{2}\frac{\partial^2
d_1}{\partial y^2} (t,Y_t) Y_t^2 + b \frac{\partial d_1}{\partial
 y}(t,Y_t) Y_t\right\}+\frac{\sigma^2}{2}\big(\frac{\partial
d_1}{\partial y} (t,Y_t)\big)^2\phi'(d_1(t,Y_t))Y_t^2, \\
& \sigma^X_t = \sigma \phi(d_1(t,Y_t)) 
\frac{\partial d_1}{\partial y}(t,Y_t) Y_t,
 \end{split}
\end{equation*}
with $\phi$ the density of a standard Gaussian random variable. Almost surely, $Y_T\neq K$ and therefore
both $b^X$ and $\sigma^X$ are continuous on $[0,T]$ and $b^X_T =
 \sigma^X_T = 0$. Furthermore $\sigma^X$ is positive on
 $[0,T)$. Hence
Assumption \ref{assump_model} is satisfied.
 \end{exam}
\noindent As explained in the introduction, in practice, the trader cannot realize the theoretical strategy $X_t$ which typically implies continuous trading. Hence the quantity 
$$\int_0^T X_sdY_s$$ only represents a benchmark terminal wealth and $X_t$ is a benchmark hedging strategy.
Thus, we consider that this strategy is discretized over the stopping times 
\begin{equation*}
0=\tau^n_0\leq \tau^n_1\leq \cdots\leq \tau^n_j\leq\cdots,
\end{equation*}
so that for given $n$, almost surely, $\tau_j^n$ attains $T$ for $j$ large enough. Such array of stopping times is called a discretization rule. Consequently, if we define the discretized process $X^n$ by
\begin{equation*}
X^n_t  = X_{\tau^n_j}, \quad t\in [\tau^n_j, \tau^n_{j+1}), 
\end{equation*}
the hedging error $Z^n_T$ with respect to the benchmark strategy writes
\begin{equation*}
Z^n_T = \int_0^T(X^n_{s-} - X_s) dY_s. 
\end{equation*}

\noindent We now define the admissibility conditions for our discretization rules which we comment in the next subsection. 

\begin{cond}[Admissibility conditions]\label{Condition I} A discretization rule $(\tau_j^n)$ is admissible if
there exist continuous $\mathbb F$-adapted processes $a$ and $s$ satisfying
\begin{equation}\label{asmp: integrability a s}
\E{\int_0^T\big(1+(\rho_t)^2\big)(a_t^2+s_t^2)(\sigma^Y_t)^2dt}< \infty,
\end{equation}
and a positive sequence $\varepsilon_n$ tending to zero such that:
\begin{itemize}
\item  The first two moments of the renormalized hedging error $\varepsilon_n^{-1}Z^n_T$ converge
to those of a random variable of the form
\begin{equation}\label{eqn: stable limit}
Z^*_{a,s}=\frac{1}{3}\int_0^T s_tdY_t + \frac{1}{\sqrt{6}}\int_0^T\paren{a_t^2 - \frac{2}{3}s_t^2}^{1/2}\sigma^Y_tdB_t,
\end{equation}
that is,
\begin{equation}\label{moment2}
\bbE[\varepsilon_n^{-1}Z^n_T]\rightarrow \bbE[Z^*_{a,s}], \ \ 
\bbE[(\varepsilon_n^{-1}Z^n_T)^2]\rightarrow \bbE[(Z^*_{a,s})^2],
\end{equation}
where $B$ is a Brownian motion independent of all the other quantities.
\item Almost surely, the processes $a_t$ and $s_t$ satisfy $a_t^2 \geq
      s_t^2$, for all $t \in [0,T]$.
\end{itemize}
\end{cond}

\subsection{Comments on the admissibility conditions}
\noindent Equation \ref{asmp: integrability a s} is simply a technical integrability condition. 
We now give the interpretation of the sequence $\varepsilon_n$. Recall that for fixed $n$, we deal
with an increasing sequence of stopping times $(\tau_j^n)$ over $[0,T]$. Typically, $\varepsilon_n^2$ will represent
the order of magnitude of the interarrival time $\tau_{j+1}^n-\tau_j^n$. For example, in the case of equidistant trading times
with frequency $n/T$, $\varepsilon_n$ can simply be taken equal to $n^{-1/2}$. In the case of the hitting times based scheme consisting in rebalancing the portfolio
each time the process $X$ has varied by $\nu_n$, where $\nu_n$ is a deterministic sequence tending to zero, one can choose $\varepsilon_n=\nu_n$ (since the order of magnitude of the time interval between two hitting times is $\nu_n^2$). 
\\

\noindent The specific form \eqref{eqn: stable limit} may
appear rather ad hoc at first sight. However, it is in fact quite
natural. Indeed, Proposition~\ref{proposition: stable conv} below, which is proved in Appendix and used to show the main result of the next subsection, indicates that as soon as the quadratic covariations $\varepsilon_n^{-2}\crochet{Z^n}$
and $\varepsilon_n^{-1}\crochet{Z^n, Y}$ have regular limits, the form \eqref{eqn: stable limit} appears for the weak limit of the  
renormalized hedging error. So the idea for this admissibility condition is that in our asymptotic approach, we want to work in regular cases where the renormalized hedging error can be approximated by a random variable of the form \eqref{eqn: stable limit}. However, our asymptotic optimality criterion will be based on the first two moments of the renormalized hedging error only. Therefore, we just require these first two moments to be asymptotically close to those of a random variable of the form \eqref{eqn: stable limit} (in particular we do not impose the convergence in law of the renormalized hedging error towards $Z^*_{a,s}$, although this is the underlying idea behind this admissibility condition). We now give Proposition \ref{proposition: stable conv}.

\begin{prop}\label{proposition: stable conv}
If there exist a sequence $\varepsilon_n \to 0$ and continuous processes $s$ and
 $a$ such that
\begin{align}
&\varepsilon_n^{-2}\crochet{Z^n}\rightarrow\frac{1}{6}\int_0^{\cdot}a_u^2(\sigma^Y_u)^2du,\label{asmp: limit 2} \\
&\varepsilon_n^{-1}\crochet{Z^n, Y}\rightarrow \frac{1}{3}\int_0^{\cdot}s_u(\sigma^Y_u)^2du,\label{asmp: limit 3}
\end{align} 
uniformly in probability on  $[0,T]$, then $\varepsilon_n^{-1}Z^n$ converges in law to 
\begin{equation}\label{condstabconv}
\frac{1}{3}\int_0^\cdot s_tdY_t + \frac{1}{\sqrt{6}}\int_0^\cdot\paren{a_t^2 - \frac{2}{3}s_t^2}^{1/2}\sigma^Y_tdB_t
\end{equation}
in $C[0,T]$. In particular the convergence in law of $\varepsilon_n^{-1}Z^n_T$  to $Z_{a,s}^\ast$  defined by \eqref{eqn: stable limit}
 holds.
If in addition, 
\begin{equation}
\label{Condition add}
\varepsilon_n^{-4/3}\sup_{j\geq 0}(\tau^n_{j+1}\wedge T_0  - \tau^n_j \wedge T_0)\to 0
\end{equation}
in probability, for all $T_0 \in [0,T)$, then almost surely $a_t^2 \geq s^2_t$ for
 all  $t \in [0,T]$. 
\end{prop}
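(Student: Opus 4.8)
\emph{The plan} is to prove the two assertions separately. For the functional convergence, split $Z^n$ into a drift and a martingale part: handle the drift by a uniform‑in‑probability argument and the martingale by a stable central limit theorem. For the sharp inequality $a^2\geq s^2$, upgrade the Cauchy--Schwarz constant on each rebalancing sub-interval from $1$ to the Gaussian value $2/3$, and then differentiate in time. For the first part, using $dY_t = b^Y_t\,dt + \sigma^Y_t\,dW^Y_t$, write $\varepsilon_n^{-1}Z^n_t = A^n_t + M^n_t$ with $A^n_t = \varepsilon_n^{-1}\int_0^t(X^n_{s-}-X_s)b^Y_s\,ds$ and $M^n_t = \varepsilon_n^{-1}\int_0^t(X^n_{s-}-X_s)\sigma^Y_s\,dW^Y_s$. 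Since $\sigma^Y$ is continuous and positive on $[0,T]$, hence bounded away from $0$ and $\infty$, one has $\int_0^\cdot\varepsilon_n^{-1}|X^n_{s-}-X_s|\,ds\leq T^{1/2}(C\varepsilon_n^{-2}\crochet{Z^n}_\cdot)^{1/2}$, which is bounded in probability by \eqref{asmp: limit 2}; approximating the continuous processes $b^Y/(\sigma^Y)^2$ and $\sigma^Y$ by step processes along a fine deterministic partition and applying \eqref{asmp: limit 3} on each mesh interval then gives $A^n\to\tfrac13\int_0^\cdot s_u b^Y_u\,du$ uniformly in probability.

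For $M^n$, note that $Y$, $Z^n$ and $M^n$ are continuous, so no Lindeberg-type condition on jumps is needed, and it suffices to identify the limiting brackets in order to apply the stable central limit theorem for continuous local martingales. One has $\crochet{M^n}_t = \varepsilon_n^{-2}\crochet{Z^n}_t\to\tfrac16\int_0^t a_u^2(\sigma^Y_u)^2\,du$ by \eqref{asmp: limit 2}; $\crochet{M^n,W^Y}_t = \varepsilon_n^{-1}\int_0^t(X^n_{s-}-X_s)\sigma^Y_s\,ds\to\tfrac13\int_0^t s_u\sigma^Y_u\,du$ by \eqref{asmp: limit 3} and the same step-process approximation; and $\crochet{M^n,N}=0$ for any bounded $\bbF$-martingale $N$ orthogonal to $W^Y$, since $M^n$ is a stochastic integral against $W^Y$. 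These are precisely the brackets of $\tfrac13\int_0^\cdot s_u\sigma^Y_u\,dW^Y_u + \tfrac1{\sqrt6}\int_0^\cdot(a_u^2-\tfrac23 s_u^2)^{1/2}\sigma^Y_u\,dB_u$, a process that is well defined because \eqref{asmp: limit 2}--\eqref{asmp: limit 3} already force $a_u^2\geq\tfrac23 s_u^2$: the pathwise Cauchy--Schwarz inequality applied to $d\crochet{Z^n,Y}_s = (X^n_{s-}-X_s)\,d\crochet{Y}_s$ and $d\crochet{Z^n}_s = (X^n_{s-}-X_s)^2\,d\crochet{Y}_s$ gives $(\varepsilon_n^{-1}(\crochet{Z^n,Y}_t-\crochet{Z^n,Y}_r))^2\leq(\varepsilon_n^{-2}(\crochet{Z^n}_t-\crochet{Z^n}_r))(\crochet{Y}_t-\crochet{Y}_r)$ for $r<t$, and letting $n\to\infty$, dividing by $(t-r)^2$ and letting $t\downarrow r$ yields $\tfrac19 s_r^2\leq\tfrac16 a_r^2$ for a.e.\ $r$, hence everywhere by continuity. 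Since $A^n$ converges uniformly in probability to an $\calF$-measurable limit, it combines with the stable convergence of $M^n$, and regrouping $b^Y_u\,du$ with $\sigma^Y_u\,dW^Y_u$ into $dY_u$ yields the convergence in law of $\varepsilon_n^{-1}Z^n$ to \eqref{condstabconv} in $C[0,T]$; evaluating at $t=T$ gives the convergence of $\varepsilon_n^{-1}Z^n_T$ to $Z^\ast_{a,s}$.

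For the sharp inequality, fix $T_0\in[0,T)$; I claim that for all $0\leq r<t\leq T_0$,
$$\big(\varepsilon_n^{-1}(\crochet{Z^n,Y}_t-\crochet{Z^n,Y}_r)\big)^2 \leq \tfrac23\,\varepsilon_n^{-2}(\crochet{Z^n}_t-\crochet{Z^n}_r)(\crochet{Y}_t-\crochet{Y}_r) + R^n_{r,t},$$
with $R^n_{r,t}\to0$ in probability. Granting this, letting $n\to\infty$ via \eqref{asmp: limit 2}--\eqref{asmp: limit 3}, dividing by $(t-r)^2$ and letting $t\downarrow r$ (Lebesgue differentiation, legitimate since $a,s,\sigma^Y$ are continuous) gives $\tfrac19 s_r^2(\sigma^Y_r)^4\leq\tfrac23\cdot\tfrac16 a_r^2(\sigma^Y_r)^4$, i.e.\ $s_r^2\leq a_r^2$, for a.e.\ $r\in[0,T_0]$, hence everywhere by continuity, and letting $T_0\uparrow T$ concludes. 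To prove the claim, one decomposes the increments over the rebalancing sub-intervals; on $[\tau^n_j,\tau^n_{j+1})$ set $D_j(u)=X_u-X_{\tau^n_j}$ (so $D_j(\tau^n_j)=0$) and compare $D_j(u)$ with $\sigma^X_{\tau^n_j}(W^X_u-W^X_{\tau^n_j})$, the discrepancy being of smaller order than $(\tau^n_{j+1}-\tau^n_j)^{1/2}$ uniformly on the sub-interval by continuity of $b^X$ and $\sigma^X$. For the Brownian pieces, an It\^o-formula identity on each sub-interval writes $\big(\int_{\tau^n_j}^{\tau^n_{j+1}}(W^X_u-W^X_{\tau^n_j})\,du\big)^2$ as $\tfrac23(\tau^n_{j+1}-\tau^n_j)\int_{\tau^n_j}^{\tau^n_{j+1}}(W^X_u-W^X_{\tau^n_j})^2\,du$ plus a remainder of order $(\tau^n_{j+1}-\tau^n_j)^3$ --- this is exactly why the constant drops to $2/3$, reflecting that a centred Gaussian path through the origin satisfies the Cauchy--Schwarz bound with constant $2/3$ in expectation. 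Summing over $j$, the leading terms reproduce $\tfrac23\varepsilon_n^{-2}(\crochet{Z^n}_t-\crochet{Z^n}_r)(\crochet{Y}_t-\crochet{Y}_r)$, while the accumulated remainders and approximation errors, multiplied by $\varepsilon_n^{-2}$, tend to zero; the exponent $4/3$ in Condition \eqref{Condition add} is calibrated precisely for this.

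\emph{Main obstacle.} The first two paragraphs are a fairly standard application of the stable martingale central limit theorem, and the weak bound $a^2\geq\tfrac23 s^2$ is immediate from Cauchy--Schwarz. The real work is the sharp inequality: extracting the constant $2/3$ on each rebalancing interval and showing that the errors accumulated over the $O(\varepsilon_n^{-2})$ sub-intervals are negligible once normalized --- this is where Condition \eqref{Condition add} is genuinely used, and where one must handle the fact that the rebalancing times need not be independent of the Brownian motion $W^X$ driving $X$.
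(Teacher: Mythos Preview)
Your approach to the first assertion (the functional convergence) is sound and close in spirit to the paper, although the paper does it slightly differently: rather than splitting off the drift $A^n$ and handling it separately, it localizes $\rho = b^Y/\sigma^Y$, performs a Girsanov change of measure to make $Z^n$ itself a local martingale, applies the Jacod--Shiryaev stable CLT under the new measure, and transfers back. Your direct route works too, and the weak bound $a^2\geq\tfrac23 s^2$ from pathwise Cauchy--Schwarz is correct.

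The second assertion, however, has a genuine gap. Your core claim --- that an It\^o identity gives
\[
\Big(\int_{\tau^n_j}^{\tau^n_{j+1}}(W^X_u-W^X_{\tau^n_j})\,du\Big)^2
=\tfrac23(\tau^n_{j+1}-\tau^n_j)\int_{\tau^n_j}^{\tau^n_{j+1}}(W^X_u-W^X_{\tau^n_j})^2\,du + O((\tau^n_{j+1}-\tau^n_j)^3)
\]
pathwise --- is simply false. Both sides are of order $(\tau^n_{j+1}-\tau^n_j)^3$, so a ``remainder of order $(\tau^n_{j+1}-\tau^n_j)^3$'' carries no information; the ratio of the two sides is a nondegenerate random variable, not a constant plus a lower-order term. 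The $2/3$ you invoke is an \emph{expectation} identity for deterministic times, not a pathwise one, and it does not survive stopping times depending on $W^X$. Moreover, the left side of your displayed inequality is the square of a \emph{sum} over $j$, so even if each interval contributed a clean $2/3$ term, the cross terms $I_jI_k$ would not be accounted for.

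The paper takes a completely different route. After localizing and changing measure so that $X$ is a bounded-variation-free martingale, it proves three lemmas showing that, along a suitable subsequence,
\[
\tfrac13\varepsilon_n^{-1}\!\sum_j\kappa_{\tau^n_j}\bbE\big[\Delta_{j,n}^3\big|\calF_{\tau^n_j}\big]\approx \varepsilon_n^{-1}\crochet{Z^n,Y},\quad
\tfrac16\varepsilon_n^{-2}\!\sum_j\kappa_{\tau^n_j}\bbE\big[\Delta_{j,n}^4\big|\calF_{\tau^n_j}\big]\approx \varepsilon_n^{-2}\crochet{Z^n},\quad
\sum_j\kappa_{\tau^n_j}\bbE\big[\Delta_{j,n}^2\big|\calF_{\tau^n_j}\big]\approx \crochet{Y},
\]
where $\Delta_{j,n}=X_{\tau^n_{j+1}}-X_{\tau^n_j}$ and $\kappa=(\sigma^Y/\sigma^X)^2$. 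The factors $1/3$ and $1/6$ come from It\^o's formula on $\Delta^3$ and $\Delta^4$, and this is where the sharpening from $2/3$ to $1$ actually happens. One then applies the \emph{Cauchy--Schwarz inequality for conditional moments}, $\bbE[\Delta^3|\calF]^2\leq\bbE[\Delta^2|\calF]\bbE[\Delta^4|\calF]$, followed by discrete Cauchy--Schwarz for the sums, passes to the limit, and differentiates. Condition~\eqref{Condition add} is used to control the Lenglart-type remainders in the three lemmas, not for a pathwise $2/3$ bound. The key idea you are missing is to work with conditional moments of the terminal increments $\Delta_{j,n}$ rather than with pathwise integrals over the sub-intervals.
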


\noindent We now consider the processes $a^2_t$ and $s_t$
appearing in the admissibility conditions. We place ourselves in the situation where Proposition \ref{proposition: stable conv} can be applied. In that case, an inspection of the proof of this lemma shows that the inequality $a_t^2 \geq s^2_t$ essentially follows from the elementary fact that
$\bbE[\Delta^2]\bbE[\Delta^4] \geq \bbE[\Delta^3]^2$ for a general random variable $\Delta$. Indeed, $a^2_t$ and $s_t$
are respectively related to the local third and fourth conditional moments of the increments of $X$. Proposition~\ref{prop: intuition a s} below, which is proved in Appendix and used to show the main result in the next subsection, somehow illustrates the connections between $a^2_t$ and $s_t$ and the conditional moments. Thus we give it now.
Let $\Delta_{j,n} = X_{\tau^n_{j+1}} - X_{\tau^n_j}$ be the increment of
$X$ between $\tau^n_j$ and $\tau^n_{j+1}$ and  $N^n_t$  be the number of rebalancing times until time $t$:
\begin{equation*}
N^n_t = \max\accro{j \geq 0 \vert \tau^n_j \leq t }.
\end{equation*}
The following proposition holds.

\begin{prop}\label{prop: intuition a s} 
Let $\varepsilon_n$ be a positive sequence tending to $0$ and 
$s$ and $a$ be continuous processes. Assume the following:
\begin{itemize}
\item The family of random variables 
\begin{equation} \label{sup diff}
\varepsilon_n^{-4} \sup_{t \in [0,T]} |X^n_t - X_t|^4
\end{equation}
is uniformly integrable.

\item The following uniform convergences in probability on $[0,T_0]$ hold for all $T_0 \in [0,T)$:
\begin{equation} \label{asmp: suff}
\begin{split}
\varepsilon_n^{-1}&\sum_{j=0}^{N^n_\cdot}\kappa_{\tau^n_j}\mathbb{E}\big[
\Delta_{j, n}^3
 \big| \mathcal{F}_{\tau^n_j}
\big]
\to -\int_0^\cdot s_u (\sigma^Y_u)^2{d}u,\\
\varepsilon_n^{-2}&\sum_{j=0}^{N^n_\cdot}\kappa_{\tau^n_j}
\mathbb{E}\big[
\Delta_{j, n}^4
 \big| \mathcal{F}_{\tau^n_j}
\big]
\to \int_0^\cdot a_u^2 (\sigma^Y_u)^2{d}u,
\end{split}
\end{equation}
where $\kappa_u=(\sigma^Y_u/\sigma^X_u)^2$. 
\end{itemize}
Then the convergences \eqref{asmp: limit 2}, 
\eqref{asmp: limit 3} and \eqref{Condition add} hold.
\end{prop}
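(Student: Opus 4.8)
\noindent The plan is to rewrite the two brackets appearing in \eqref{asmp: limit 2}--\eqref{asmp: limit 3} as sums over the sampling intervals and then to identify each interval contribution with a conditional moment of $\Delta_{j,n}$ via It\^o's formula. Since $Y$ is an It\^o process and $X^n$ is piecewise constant with $X^n_s=X_{\tau^n_j}$ on $[\tau^n_j,\tau^n_{j+1})$, we have $\crochet{Z^n}_t=\int_0^t(X^n_s-X_s)^2(\sigma^Y_s)^2\,ds$ and $\crochet{Z^n,Y}_t=\int_0^t(X^n_s-X_s)(\sigma^Y_s)^2\,ds$, so both reduce to sums of the form $\sum_j\int_{\tau^n_j\wedge t}^{\tau^n_{j+1}\wedge t}(X_s-X_{\tau^n_j})^k(\sigma^Y_s)^2\,ds$ with $k=2$ and $k=1$ respectively (up to sign). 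On each interval I would replace $(\sigma^Y_s)^2\,ds$ by $\kappa_{\tau^n_j}(\sigma^X_s)^2\,ds=\kappa_{\tau^n_j}\,d\crochet{X}_s$, the error being governed by the oscillation of the continuous process $\kappa=(\sigma^Y/\sigma^X)^2$ over an interval of length at most the mesh, and then apply It\^o's formula to $(X_s-X_{\tau^n_j})^3$ and $(X_s-X_{\tau^n_j})^4$. On the complete intervals this gives the pathwise identities $\int(X_s-X_{\tau^n_j})\,d\crochet{X}_s=\tfrac13\Delta_{j,n}^3-\int(X_s-X_{\tau^n_j})^2\,dX_s$ and $\int(X_s-X_{\tau^n_j})^2\,d\crochet{X}_s=\tfrac16\Delta_{j,n}^4-\tfrac23\int(X_s-X_{\tau^n_j})^3\,dX_s$, whence, modulo the oscillation remainders and a boundary term on the single incomplete interval, $\crochet{Z^n,Y}_t\approx-\tfrac13\sum_j\kappa_{\tau^n_j}\Delta_{j,n}^3$ and $\crochet{Z^n}_t\approx\tfrac16\sum_j\kappa_{\tau^n_j}\Delta_{j,n}^4$; the signs and the constants $\tfrac13,\tfrac16$ match exactly the normalizations in \eqref{asmp: limit 2}--\eqref{asmp: limit 3} and the right-hand sides of \eqref{asmp: suff}.

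\noindent I would then replace $\sum_j\kappa_{\tau^n_j}\Delta_{j,n}^4$ and $\sum_j\kappa_{\tau^n_j}\Delta_{j,n}^3$ by $\sum_j\kappa_{\tau^n_j}\bbE[\Delta_{j,n}^4\vert\calF_{\tau^n_j}]$ and $\sum_j\kappa_{\tau^n_j}\bbE[\Delta_{j,n}^3\vert\calF_{\tau^n_j}]$. The differences are discrete martingales whose predictable brackets are dominated by $\sum_j\kappa_{\tau^n_j}^2\bbE[\Delta_{j,n}^8\vert\calF_{\tau^n_j}]$ and $\sum_j\kappa_{\tau^n_j}^2\bbE[\Delta_{j,n}^6\vert\calF_{\tau^n_j}]$; bounding $\vert\Delta_{j,n}\vert\le 2\sup_{t\le T}\vert X^n_t-X_t\vert$, using the uniform integrability of $\varepsilon_n^{-4}\sup_{t\le T}\vert X^n_t-X_t\vert^4$ and the tightness of $\varepsilon_n^{-2}\sum_j\kappa_{\tau^n_j}\bbE[\Delta_{j,n}^4\vert\calF_{\tau^n_j}]$ supplied by \eqref{asmp: suff}, these brackets are $O_{\bbP}(\varepsilon_n^6)$ and $O_{\bbP}(\varepsilon_n^4)$, so by Lenglart's inequality the martingale differences are $O_{\bbP}(\varepsilon_n^3)$ and $O_{\bbP}(\varepsilon_n^2)$, hence negligible once divided by $\varepsilon_n^2$, resp.\ $\varepsilon_n$. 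The remaining errors --- the It\^o integrals $\int(X_s-X_{\tau^n_j})^3\,dX_s$ and $\int(X_s-X_{\tau^n_j})^2\,dX_s$ (split into a local martingale, controlled through its bracket by a power of $\sup_t\vert X^n_t-X_t\vert$ times $\int_0^{T_0}(\sigma^X)^2$, and a drift bounded by a power of $\sup_t\vert X^n_t-X_t\vert$ times $\int_0^{T_0}\vert b^X\vert$), the contribution of the single incomplete interval (controlled as a conditional moment by Doob's inequality), and the oscillation remainders (a power of $\sup_t\vert X^n_t-X_t\vert$ times the modulus of continuity of $\kappa$ evaluated at the mesh) --- are all negligible at the relevant normalization after a routine localization making $\int_0^T(\vert b^X\vert+\vert b^Y\vert)$, $\sup_{[0,T]}((\sigma^X)^2+(\sigma^Y)^2)$, $\inf_{[0,T_0]}\sigma^X$ and $\sup_{[0,T_0]}\kappa$ deterministically bounded, provided the mesh tends to $0$. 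Combined with \eqref{asmp: suff} this yields \eqref{asmp: limit 2}--\eqref{asmp: limit 3} uniformly in probability on every $[0,T_0]$, $T_0<T$; the extension to $[0,T]$ follows from $\crochet{Z^n}_T-\crochet{Z^n}_{T_0}\le\sup_t\vert X^n_t-X_t\vert^2\int_{T_0}^T(\sigma^Y)^2$, the analogous bound for $\crochet{Z^n,Y}$, and the continuity, hence boundedness, of $a$, $s$ and $\sigma^Y$ on $[0,T]$.

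\noindent It remains to establish $\varepsilon_n^{-4/3}\sup_j(\tau^n_{j+1}\wedge T_0-\tau^n_j\wedge T_0)\to 0$ in probability for each $T_0<T$, which in particular supplies the ``mesh $\to 0$'' used above, and this is the step I expect to be the main obstacle. On the event $\{\sup_{t\le T}\vert X^n_t-X_t\vert\le M\varepsilon_n\}$, which has probability at least $1-CM^{-4}$ by the uniform integrability hypothesis, $X$ oscillates by at most $2M\varepsilon_n$ on each $[\tau^n_j\wedge T_0,\tau^n_{j+1}\wedge T_0]$; hence if some such interval has length exceeding $\delta\varepsilon_n^{4/3}$, it contains a cell $[k\ell_n,(k+1)\ell_n]\subset[0,T_0]$ of a regular deterministic grid of mesh $\ell_n=\tfrac12\delta\varepsilon_n^{4/3}$, on which the oscillation of $X$ is still at most $2M\varepsilon_n$. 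After localizing $\inf_{[0,T_0]}\sigma^X\ge c_0>0$ and $\int_0^{T_0}\vert b^X\vert\le C$, one drops the drift (negligible since $\ell_n/\varepsilon_n\to0$) and applies the Dambis--Dubins--Schwarz representation of $\int\sigma^X\,dW^X$: the event above forces a Brownian motion to stay in an interval of width $O(M\varepsilon_n)$ for a time of order $c_0^2\ell_n$, an event of probability at most $q^{\,\lfloor c_0^2\ell_n/(3M\varepsilon_n)^2\rfloor}$ for some universal $q<1$ (cut the time interval into $\lfloor c_0^2\ell_n/(3M\varepsilon_n)^2\rfloor$ pieces and use independence of increments). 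Since $\ell_n\varepsilon_n^{-2}=\tfrac12\delta\varepsilon_n^{-2/3}\to\infty$ while there are only $O(\ell_n^{-1})$ cells, a union bound gives $\limsup_n\bbP\big(\sup_j(\tau^n_{j+1}\wedge T_0-\tau^n_j\wedge T_0)>\delta\varepsilon_n^{4/3}\big)\le CM^{-4}$, and letting $M\to\infty$ and removing the localization yields \eqref{Condition add}. The genuinely delicate point is precisely this last argument: converting the qualitative fact that $X$ barely moves on a long interval while having volatility bounded away from $0$ into a quantitative, exponentially small probability with exactly the exponent producing the rate $4/3$, and checking that the polynomially many cells in the union bound cannot overwhelm it.
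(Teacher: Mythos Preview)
Your treatment of \eqref{asmp: limit 2}--\eqref{asmp: limit 3} is essentially the paper's: It\^o's formula to connect the interval contributions of $\crochet{Z^n}$ and $\crochet{Z^n,Y}$ to $\Delta_{j,n}^3$, $\Delta_{j,n}^4$, a Lenglart step to pass between pathwise sums and sums of conditional expectations, and control of the oscillation of $\kappa$. The paper streamlines this slightly by first removing the drift of $X$ via Girsanov (so that $\bbE[\int(X_s-X_{\tau^n_j})^k\,dX_s\mid\calF_{\tau^n_j}]=0$ and your stochastic-integral error terms never appear) and by applying It\^o directly to the conditional moments rather than pathwise; but these are organizational choices, not different ideas. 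The paper also introduces the explicit stopping time $\gamma^n_K=\inf\{t:\varepsilon_n^{-1}|X_t-X^n_t|\ge K\}$ to turn the uniform integrability of \eqref{sup diff} into a deterministic pathwise bound, which is what makes your ``$|\Delta_{j,n}|\le\sup_t|X^n_t-X_t|$'' usable inside conditional expectations.

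The substantive divergence is in \eqref{Condition add}, which you identify as the main obstacle and attack with a small-ball argument: trap the random mesh inside a fine deterministic grid, use Dambis--Dubins--Schwarz to reduce to the probability that Brownian motion stays in an interval of width $O(\varepsilon_n)$ for time $O(\varepsilon_n^{4/3})$, get an exponentially small bound in $\varepsilon_n^{-2/3}$, and union-bound over polynomially many cells. This is correct, but the paper shows it is unnecessary. After localizing to $\gamma^n_K$ and using Girsanov to make $X$ a local martingale, Burkholder--Davis--Gundy and Doob give, for any integer $m$,
\[
\bbE\Big[\sum_{j\ge0}\big(\crochet{X}_{\tau^n_{j+1}\wedge\gamma^n_K}-\crochet{X}_{\tau^n_j\wedge\gamma^n_K}\big)^m\Big]
\le C\,\bbE\Big[\sum_{j\ge0}\sup_t|X_{\tau^n_{j+1}\wedge\gamma^n_K\wedge t}-X_{\tau^n_j\wedge\gamma^n_K\wedge t}|^{2m}\Big]
\le C(K\varepsilon_n)^{2m-2}\,\bbE[\crochet{X}_T],
\]
so $\varepsilon_n^{-p}\sup_j(\crochet{X}_{\tau^n_{j+1}}-\crochet{X}_{\tau^n_j})\to0$ in probability for \emph{every} $p<2$ (choose $m>2/(2-p)$ and bound the sup by the $m$-th root of the sum). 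Localizing $\sigma^X\ge 1/K$ on $[0,T_0]$ then yields $\tau^n_{j+1}\wedge T_0-\tau^n_j\wedge T_0\le K^2(\crochet{X}_{\tau^n_{j+1}}-\crochet{X}_{\tau^n_j})$, hence $\varepsilon_n^{-p}\sup_j(\tau^n_{j+1}\wedge T_0-\tau^n_j\wedge T_0)\to0$ for all $p<2$, in particular $p=4/3$. This two-line moment argument replaces your DDS/small-ball machinery entirely, and delivers a strictly stronger statement with essentially no work.
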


\noindent 
Proposition~\ref{prop: intuition a s} is useful to obtain the convergences \eqref{asmp: limit 2} and \eqref{asmp: limit 3} for a given discretization rule since it is usually easy to have approximate values of the conditional moments of the increments.
We actually apply this approach in the proof of the main result of the next subsection.

\subsection{Examples of admissible discretization rules}

We show in this section that the most common discretization rules are admissible. We start with hitting times based schemes. We have the following result.

\begin{prop}[Hitting times based discretization rule]\label{prop : hitting strats}
Let $\varepsilon_n$ be a positive sequence tending to zero and
$\dl$ and $\ul$ be two adapted processes which are positive and 
continuous on $[0,T]$ almost surely with
\begin{equation}\label{regbarrier} \E{\int_0^T\big(1+(\rho_t)^2\big)(\ul_t\vee\dl_t)^2(\sigma^Y_t)^2dt} < \infty.
\end{equation}
The discretization rule based on the hitting times of $\varepsilon_n \dl_{t}$ or $\varepsilon_n \ul_{t}$ by the process $X$:
\begin{equation}\label{def: hitting strat}
\tau^n_{j+1} = \inf\accro{t>\tau^n_j: X_t \notin (X_{\tau^n_j}- \varepsilon_n \dl_{t},X_{\tau^n_j}+\varepsilon_n \ul_{t} )} \wedge T
\end{equation}
is admissible. Moreover, we can take
\begin{equation}\label{prop: hitting strat}
s_t = \dl_t - \ul_t,\quad a_t^2 =(s_t)^2+\dl_t\ul_t
\end{equation}
and we also have the convergence \eqref{condstabconv} and therefore the convergence in law of $Z_T^n$ towards $Z^\ast_{a,s}$ defined by \eqref{eqn: stable limit}.
\end{prop}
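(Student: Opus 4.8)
The plan is to verify the admissibility conditions of Condition~\ref{Condition I} by applying Proposition~\ref{prop: intuition a s}, so that the task reduces to (i) controlling the increments $\Delta_{j,n}=X_{\tau^n_{j+1}}-X_{\tau^n_j}$ and the interarrival times, and (ii) identifying the limits of the rescaled sums of conditional third and fourth moments appearing in \eqref{asmp: suff}. First I would set $T_0\in[0,T)$ and localize: on $[0,T_0]$, Assumption~\ref{assump_model} guarantees that $\sigma^X$ is bounded away from $0$ and that $b^X,\sigma^X,b^Y,\sigma^Y,\dl,\ul$ are bounded, up to stopping at a sequence of stopping times exhausting $[0,T_0)$ and using a standard localization argument to remove them at the end. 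On such a localized interval, since $X$ moves essentially like $\int\sigma^X dW^X$ with nondegenerate volatility, the time $X$ needs to exit a band of width $\sim\varepsilon_n(\dl+\ul)$ is of order $\varepsilon_n^2$; more precisely, $\tau^n_{j+1}-\tau^n_j = O(\varepsilon_n^2)$ uniformly in $j$ with high probability, which yields \eqref{Condition add} (the $\varepsilon_n^{-4/3}$ scaling leaves plenty of room) and also the uniform integrability of $\varepsilon_n^{-4}\sup_t|X^n_t-X_t|^4$, because $\sup_t|X^n_t-X_t|$ is comparable to $\varepsilon_n(\dl\vee\ul)$ plus a negligible correction, and \eqref{regbarrier} provides the needed integrability.

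The heart of the argument is computing the conditional moments of $\Delta_{j,n}$. Conditionally on $\mathcal F_{\tau^n_j}$, over a time horizon of order $\varepsilon_n^2$, the process $X$ is well approximated by a Brownian motion with local volatility $\sigma^X_{\tau^n_j}$ (the drift $b^X$ contributes only at lower order, of size $\varepsilon_n^2$, which is negligible against the relevant scale), started at $X_{\tau^n_j}$ and stopped when it first leaves the interval $(X_{\tau^n_j}-\varepsilon_n\dl_{\tau^n_j},X_{\tau^n_j}+\varepsilon_n\ul_{\tau^n_j})$ — here one also argues that replacing the time-varying barriers $\dl_t,\ul_t$ by their frozen values $\dl_{\tau^n_j},\ul_{\tau^n_j}$ is legitimate by continuity. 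For a Brownian motion started at $0$ and stopped at the exit of $(-\dl,\ul)$, the exit value $\Delta$ takes the value $\ul$ with probability $\dl/(\dl+\ul)$ and $-\dl$ with probability $\ul/(\dl+\ul)$, giving $\bbE[\Delta]=0$, $\bbE[\Delta^2]=\dl\ul$, $\bbE[\Delta^3]=\dl\ul(\ul-\dl)$ and $\bbE[\Delta^4]=\dl\ul(\dl^2-\dl\ul+\ul^2)$. Therefore $\varepsilon_n^{-3}\bbE[\Delta_{j,n}^3\mid\mathcal F_{\tau^n_j}]\approx \dl_{\tau^n_j}\ul_{\tau^n_j}(\ul_{\tau^n_j}-\dl_{\tau^n_j})$ and $\varepsilon_n^{-4}\bbE[\Delta_{j,n}^4\mid\mathcal F_{\tau^n_j}]\approx \dl_{\tau^n_j}\ul_{\tau^n_j}((\ul_{\tau^n_j}-\dl_{\tau^n_j})^2+\dl_{\tau^n_j}\ul_{\tau^n_j})$. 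Multiplying by $\kappa_{\tau^n_j}=(\sigma^Y/\sigma^X)^2$ and by $\varepsilon_n^{-1}$, respectively $\varepsilon_n^{-2}$, the sums in \eqref{asmp: suff} become Riemann-type sums: since the number of rebalancing times in a small interval $[u,u+du]$ is $\approx (\sigma^X_u)^2 du/(\varepsilon_n^2\dl_u\ul_u)$ (the expected squared increment $\varepsilon_n^2\dl_u\ul_u$ must account for the local quadratic variation $(\sigma^X_u)^2du$ of $X$), the per-step contribution $\varepsilon_n^3\kappa_u \dl_u\ul_u(\ul_u-\dl_u)$ accumulates to $\int_0^\cdot \kappa_u(\sigma^X_u)^2(\ul_u-\dl_u)\,du=\int_0^\cdot(\sigma^Y_u)^2(\ul_u-\dl_u)\,du$, which equals $-\int_0^\cdot s_u(\sigma^Y_u)^2\,du$ with $s=\dl-\ul$; similarly the fourth-moment sum converges to $\int_0^\cdot(\sigma^Y_u)^2\dl_u\ul_u((\ul_u-\dl_u)^2+\dl_u\ul_u)\,du=\int_0^\cdot a_u^2(\sigma^Y_u)^2\,du$ with $a^2=s^2+\dl\ul$, exactly as claimed in \eqref{prop: hitting strat}.

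The main obstacle is making the above Brownian-approximation and Riemann-sum heuristics rigorous uniformly in $j$, and in particular establishing the uniform-in-probability convergence of the two sums in \eqref{asmp: suff} rather than just convergence of expectations. Concretely I would: bound the error between $\bbE[\Delta_{j,n}^k\mid\mathcal F_{\tau^n_j}]$ and its frozen-coefficient Brownian surrogate using the continuity moduli of $\sigma^X$, $\dl$, $\ul$ on the time scale $\varepsilon_n^2$ (together with Burkholder--Davis--Gundy and standard exit-time estimates for Itô processes); check that the accumulated errors, summed over $O(\varepsilon_n^{-2})$ steps, vanish; and handle the uniformity over $[0,\cdot]$ via a tightness/monotonicity argument since the limiting processes are continuous and increasing (for the fourth-moment sum) — the first sum is of bounded variation with a continuous limit, so pointwise convergence plus equicontinuity upgrades to uniform convergence. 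Once \eqref{asmp: limit 2}, \eqref{asmp: limit 3} and \eqref{Condition add} are secured via Proposition~\ref{prop: intuition a s}, Proposition~\ref{proposition: stable conv} immediately delivers both the convergence in law \eqref{condstabconv} (hence \eqref{eqn: stable limit}, hence \eqref{moment2} once one checks the uniform integrability needed to pass from weak convergence to convergence of the first two moments — this again uses \eqref{regbarrier}) and the inequality $a_t^2\ge s_t^2$, which here is transparent since $a^2-s^2=\dl\ul>0$. Finally, I would remove the localization, noting that \eqref{regbarrier} is precisely the integrability needed for \eqref{asmp: integrability a s} with the stated $a,s$, completing the verification of Condition~\ref{Condition I}.
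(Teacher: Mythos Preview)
Your proposal is essentially correct and follows the same route as the paper: localize, apply Proposition~\ref{prop: intuition a s} to reduce to the conditional moment sums \eqref{asmp: suff}, freeze the barriers at $\tau^n_j$, compute the exit distribution of a Brownian motion from $(-\varepsilon_n\dl,\varepsilon_n\ul)$, convert the resulting sums into integrals, and finally use \eqref{regbarrier} for the uniform integrability that upgrades weak convergence to convergence of the first two moments.

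A few small remarks. First, the paper removes the drift $b^X$ by a Girsanov change of measure rather than by arguing that it is ``lower order''; this is cleaner and avoids tracking drift corrections in the exit-moment estimates. Second, the delicate step you summarize as ``replacing the time-varying barriers by their frozen values is legitimate by continuity'' is precisely the paper's Approximation Lemma: it controls $\sum_j \bbE[|\tilde\tau^n_{j+1}-\tau^n_{j+1}|\,|\,\mathcal F_{\tau^n_j}]$ by splitting on whether the exit occurs before or after time $\tau^n_j+\varepsilon_n$ and using the modulus of continuity of $\dl,\ul$ on the latter event; your sketch would need exactly this. Third, for the Riemann-sum step the paper does not count hitting times directly but instead writes $\varepsilon_n^{-2}\kappa\,\bbE[\Delta^4]=a^2\kappa\,\bbE[\Delta^2]$ and $\varepsilon_n^{-1}\kappa\,\bbE[\Delta^3]=-s\,\kappa\,\bbE[\Delta^2]$, then invokes a lemma from \cite{fukasawa2011discretization} to pass $\sum_j\kappa_{\tau^n_j}\bbE[\Delta_{j,n}^2\mid\mathcal F_{\tau^n_j}]\to\int(\sigma^Y)^2du$; this is equivalent to your counting heuristic but sidesteps the need to estimate $N^n_t$ precisely. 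Finally, there are two arithmetic slips in your display: the per-step third-moment contribution is $\varepsilon_n^2\kappa\,\dl\ul(\ul-\dl)$ (not $\varepsilon_n^3$), and your stated fourth-moment limit carries an extra factor $\dl_u\ul_u$ that should have cancelled against the density of hitting times---your final identification $a^2=s^2+\dl\ul$ is nonetheless correct.
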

\noindent
It is interesting to note here that the limit $Z^\ast_{a,s}$ does not depend on the structure of $X$.\\

\noindent This result is particularly important since many traders monitor the values of the increments of their so-called delta (which corresponds to the process $X$) in order to decide when to rebalance their portfolio. Thus they are indeed using hitting times based strategies. This proposition notably extends the weak convergence results in \cite{fukasawa2011discretization} since it shows that not only constant barriers (between  $\tau^n_j$ and $\tau^n_{j+1}$) but also time varying stochastic barriers can be considered. This will be very useful in the next sections since our optimal discretization rules will correspond to hitting times of such barriers. Furthermore, in the proofs, the assumption that the time varying barriers satisfy \eqref{regbarrier} will enable us to deduce quite easily some relevant integrability properties for the hedging error (which would be harder to obtain with locally constant barriers).\\ 

\noindent Now remark that under the condition $a_t^2 > s_t^2$, we can always find some positive processes $\ul_t$ and $\dl_t$ such that \eqref{prop: hitting strat}
is satisfied. Indeed, it is easy to see that the real numbers $\ul_t$ and $-\dl_t$ can be taken as
the roots of the quadratic equation $x^2 + s_tx +s_t^2 - a_t^2 = 0$. Under
the condition $a_t^2 > s_t^2$, this equation admits two nonzero roots
with different signs. Therefore, another interesting property of hitting times based schemes is the following.

\begin{lem}\label{lem : hitting strat univ}
For any pair of limiting processes $s$ and $a$ satisfying \eqref{asmp: integrability a s} and $a_t^2>s_t^2$, we can always build a corresponding admissible discretization rule based on hitting times as in \eqref{def: hitting strat}-\eqref{prop: hitting strat}.
\end{lem}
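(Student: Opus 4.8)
The plan is to read off the barriers $\dl$ and $\ul$ explicitly from the algebraic relations \eqref{prop: hitting strat}, to check that they fulfil the hypotheses of Proposition~\ref{prop : hitting strats}, and then simply to invoke that proposition. Concretely, given continuous $\mathbb F$-adapted processes $s$ and $a$ with $a_t^2 > s_t^2$ for all $t\in[0,T]$ almost surely, I would set
\[
\ul_t = \tfrac12\big(-s_t + \sqrt{4a_t^2-3s_t^2}\big),\qquad \dl_t = \tfrac12\big(s_t + \sqrt{4a_t^2-3s_t^2}\big),
\]
which are, up to the sign convention, the two roots of $x^2 + s_t x + (s_t^2 - a_t^2)=0$. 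Since $a_t^2 > s_t^2$ forces the discriminant $4a_t^2-3s_t^2$ to be $>s_t^2\geq 0$, we get $\sqrt{4a_t^2-3s_t^2} > |s_t|$, so both $\ul_t$ and $\dl_t$ are strictly positive; and as $(s,a)\mapsto\sqrt{4a^2-3s^2}$ is continuous on the region $4a^2>3s^2$, the processes $\ul$ and $\dl$ inherit continuity and adaptedness from $s$ and $a$.

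Next I would verify the two remaining requirements. A direct computation gives $\dl_t-\ul_t = s_t$ and $\dl_t\ul_t = \tfrac14\big((4a_t^2-3s_t^2)-s_t^2\big) = a_t^2-s_t^2$, hence $(\dl_t-\ul_t)^2 + \dl_t\ul_t = a_t^2$, so the pair $(\ul,\dl)$ reproduces exactly the prescribed $(s,a)$ through \eqref{prop: hitting strat}. For the integrability condition \eqref{regbarrier}, note that $\ul_t+\dl_t = \sqrt{4a_t^2-3s_t^2}$, whence $(\ul_t\vee\dl_t)^2 \leq (\ul_t+\dl_t)^2 = 4a_t^2-3s_t^2 \leq 4(a_t^2+s_t^2)$; integrating against $(1+\rho_t^2)(\sigma^Y_t)^2$ and taking expectations, \eqref{regbarrier} follows from \eqref{asmp: integrability a s} with a factor $4$ to spare.

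With $\ul$ and $\dl$ now positive, continuous, adapted and satisfying \eqref{regbarrier}, Proposition~\ref{prop : hitting strats} applies verbatim: the hitting-time rule \eqref{def: hitting strat} built on $\varepsilon_n\dl_t$ and $\varepsilon_n\ul_t$ is admissible, its associated limiting processes are the ones prescribed by \eqref{prop: hitting strat}, which by the identities above are exactly $s$ and $a$, and one even obtains the functional convergence \eqref{condstabconv} (hence convergence in law of $Z^n_T$ to $Z^\ast_{a,s}$). I do not expect a genuine obstacle here; the only point requiring a little care is that the construction is well posed — strictly positive barriers, no degeneracy, preservation of continuity and adaptedness — precisely under the \emph{strict} inequality $a_t^2>s_t^2$, which is exactly why the lemma is stated with that strict inequality rather than with the $a_t^2\geq s_t^2$ appearing in Condition~\ref{Condition I}.
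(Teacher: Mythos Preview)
Your proposal is correct and follows essentially the same approach as the paper: the paragraph immediately preceding the lemma already observes that $\ul_t$ and $-\dl_t$ can be taken as the roots of $x^2+s_tx+s_t^2-a_t^2=0$, which under $a_t^2>s_t^2$ have opposite signs, and then invokes Proposition~\ref{prop : hitting strats}. Your write-up is in fact more complete than the paper's, since you explicitly verify the integrability condition \eqref{regbarrier} via the bound $(\ul_t\vee\dl_t)^2\leq 4(a_t^2+s_t^2)$, a point the paper leaves implicit.
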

\noindent Consequently, if one has some processes $a_t$ and $s_t$ as targets, Lemma \ref{lem : hitting strat univ} implies that a strategy giving rise
to these processes in the limiting distribution \eqref{eqn: stable limit} can be found. We will work in this framework in Section \ref{sec: optimality2}.
Remark that there are infinitely many strategies for which the hedging error converge in law to some $Z^*_{a,s}$ with the same $a$ and $s$ as limiting processes. The hitting time strategy is an efficient one among them, in the sense that it requires the least number of rebalancing in an asymptotic sense, see \cite{fukasawa2011discretization} for the detail.\\

\noindent Another classical discretization rule is given by equidistant trading times.
Here, the integrability property \eqref{moment2} in the admissibility conditions does not hold in full generality. Compared to the hitting times setting, this is because the deviations of
the benchmark strategy are not explicitly controlled by the barriers. Nevertheless, the following example describes a reasonable framework under which such discretization rule is admissible. 

\begin{prop}[Equidistant sampling discretization rule]\label{eqstrat}
Consider the hedging strategy of a European option with payoff $h(Y_T)$ and replace Assumption \ref{assump_model} by that the underlying $Y_t$ follows a diffusion process of the form
\begin{equation*}
dY_t = b(t, Y_t)Y_tdt+\sigma(t,Y_t)Y_t dW_t,
\end{equation*}
with $b$, $\sigma$ and $h$ some deterministic functions satisfying the regularity assumptions p.21-23 in \cite{zhang1999couverture} (allowing in particular for call and put in the Black-Scholes model). Define the delta hedging portfolio:
$$
X_t = \frac{\partial P}{\partial y} (t,Y_t),\text{ with }P(t,y) = \mathbb
E_{(t,y)}^{\mathbb{Q}}[h(Y_T)],
$$ where $\bbE^{\mathbb{Q}}$ denotes the expectation operator under
the risk neutral probability. Let $\varepsilon_n$ be a positive sequence tending to zero.
Then the equidistant trading times discretization rule:
\begin{equation*}
\tau^n_j =j\varepsilon_n^2, \quad j=0, \ldots, n, \ldots
\end{equation*}
is admissible (under the original measure). Moreover, we can take
\begin{equation*}
s_t=0, \quad a_t^2 = 3(\sigma^X_t)^2.
\end{equation*}
\end{prop}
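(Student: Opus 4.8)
\textbf{Proof plan for Proposition~\ref{eqstrat}.}
The plan is to verify the admissibility conditions by invoking Proposition~\ref{prop: intuition a s}: it suffices to check the uniform integrability of $\varepsilon_n^{-4}\sup_{t\in[0,T]}|X^n_t-X_t|^4$ together with the two conditional-moment convergences in \eqref{asmp: suff}, with $s_t=0$ and $a_t^2=3(\sigma^X_t)^2$. Since $s\equiv 0$, the first convergence in \eqref{asmp: suff} should follow from the fact that the conditional third moment $\E[\Delta_{j,n}^3\mid\calF_{\tau^n_j}]$ is of order $\varepsilon_n^3$ (one order smaller than what matters), which in turn rests on the smoothness of the coefficients $b$, $\sigma$ and the function $P$ away from maturity. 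For the fourth-moment convergence, I would use the diffusion representation $X_t=\partial_y P(t,Y_t)$ and Itô's formula to write, on $[\tau^n_j,\tau^n_{j+1}]$, $\Delta_{j,n}=\sigma^X_{\tau^n_j}(W_{\tau^n_{j+1}}-W_{\tau^n_j})+(\text{higher order})$, so that $\E[\Delta_{j,n}^4\mid\calF_{\tau^n_j}]=3(\sigma^X_{\tau^n_j})^4(\tau^n_{j+1}-\tau^n_j)^2+o(\varepsilon_n^4)$ uniformly; multiplying by $\kappa_{\tau^n_j}=(\sigma^Y/\sigma^X)^2$, summing, and using $\tau^n_{j+1}-\tau^n_j=\varepsilon_n^2$ together with a Riemann-sum argument yields $\varepsilon_n^{-2}\sum_j\kappa_{\tau^n_j}\E[\Delta_{j,n}^4\mid\calF_{\tau^n_j}]\to\int_0^\cdot 3(\sigma^X_u)^2(\sigma^Y_u)^2\,du$, which is exactly the claimed limit with $a_u^2=3(\sigma^X_u)^2$. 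The condition \eqref{Condition add} on the mesh is immediate here because the grid is deterministic with step $\varepsilon_n^2$, so $\varepsilon_n^{-4/3}\sup_j(\tau^n_{j+1}\wedge T_0-\tau^n_j\wedge T_0)\le\varepsilon_n^{-4/3}\varepsilon_n^2=\varepsilon_n^{2/3}\to 0$.

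The delicate point, and the reason the statement is phrased with the extra regularity hypotheses from \cite{zhang1999couverture} rather than just Assumption~\ref{assump_model}, is the uniform integrability of $\varepsilon_n^{-4}\sup_{t\in[0,T]}|X^n_t-X_t|^4$ up to and including maturity $T$. Away from $T$, $X_t=\partial_y P(t,Y_t)$ is a smooth Itô process and the increment over a time step of length $\varepsilon_n^2$ is controlled in $L^p$ by a constant times $\varepsilon_n^2$ (or $\varepsilon_n$ for the Brownian part), uniformly; the issue is the boundary layer near $T$, where for non-smooth payoffs (call, put, and more generally the payoffs allowed in \cite{zhang1999couverture}) the derivatives of $P$ blow up. The plan is to import the relevant estimates on $P$ and its spatial derivatives from \cite{zhang1999couverture} — typically bounds of the form $|\partial_y^k P(t,y)|\le C(T-t)^{-(k-1)/2}$ times Gaussian-type factors — and combine them with standard moment estimates for the diffusion $Y$ to bound $\E[\sup_{t\in[\tau^n_j,\tau^n_{j+1}]}|X_t-X_{\tau^n_j}|^4]$ by a summable-after-rescaling quantity; the $L^2$-rate-one results of \cite{zhang1999couverture,bertsimas2000time} for equidistant hedging in this setting are precisely the kind of input that makes $\varepsilon_n^{-2}\bbE[(Z^n_T)^2]$ bounded, and a slight strengthening to a fourth-moment / uniform-in-$t$ version gives the required uniform integrability. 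I expect this boundary analysis near $T$ to be the main obstacle, and I would isolate it as a lemma (proved in the appendix) controlling $\E[\varepsilon_n^{-4}\sup_{t\le T}|X^n_t-X_t|^4]$.

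Once uniform integrability is in hand, Proposition~\ref{prop: intuition a s} delivers \eqref{asmp: limit 2}, \eqref{asmp: limit 3} and \eqref{Condition add}, and then Proposition~\ref{proposition: stable conv} gives the convergence in law of $\varepsilon_n^{-1}Z^n$ to the process in \eqref{condstabconv} with $a_u^2=3(\sigma^X_u)^2$, $s_u=0$, together with $a_t^2\ge s_t^2$. It remains to upgrade the convergence in law of $\varepsilon_n^{-1}Z^n_T$ to the convergence of the first two moments required in \eqref{moment2}: weak convergence plus uniform integrability of $\{\varepsilon_n^{-1}Z^n_T\}$ and of $\{(\varepsilon_n^{-1}Z^n_T)^2\}$ yields convergence of $\bbE[\varepsilon_n^{-1}Z^n_T]$ and $\bbE[(\varepsilon_n^{-1}Z^n_T)^2]$. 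These uniform integrability properties follow from the same fourth-moment bound on $\sup_t|X^n_t-X_t|$ established for the first step, via the Burkholder–Davis–Gundy inequality applied to $Z^n_T=\int_0^T(X^n_{s-}-X_s)\,dY_s$ and the integrability of $\rho$ from Assumption~\ref{assump_model} (which is preserved under the diffusion hypothesis). Finally, one checks the integrability condition \eqref{asmp: integrability a s}: with $s=0$ and $a^2=3(\sigma^X)^2$ it reduces to $\bbE[\int_0^T(1+\rho_t^2)(\sigma^X_t)^2(\sigma^Y_t)^2\,dt]<\infty$, which again follows from the growth bounds on $\partial_y P$ near $T$ and standard moment estimates for $Y$. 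This completes the verification that the equidistant rule is admissible with the stated $a$ and $s$.
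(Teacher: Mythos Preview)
Your approach is correct in outline but takes a substantially longer route than the paper. The paper does not go through Propositions~\ref{proposition: stable conv} and~\ref{prop: intuition a s} at all: it obtains the convergence in law of $\varepsilon_n^{-1}Z^n_T$ to $Z^*_{a,s}$ (with $s=0$, $a^2=3(\sigma^X)^2$) directly by citing the limit theorem of Hayashi--Mykland \cite{hayashi2005evaluating} for equidistant sampling (after a localization so that the coefficients are bounded), and then obtains the second-moment convergence in \eqref{moment2} by citing Theorem~2.4.1 of \cite{zhang1999couverture}, which already gives the convergence of $\bbE[(\varepsilon_n^{-1}Z^n_T)^2]$ under the original measure; the first-moment convergence then follows from $L^2$-boundedness and weak convergence. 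The whole argument is two sentences.

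Your route via Proposition~\ref{prop: intuition a s} is more self-contained but hinges on the uniform integrability of $\varepsilon_n^{-4}\sup_{t\in[0,T]}|X^n_t-X_t|^4$, which you correctly flag as the main obstacle. Be aware, though, that this is genuinely stronger than anything in \cite{zhang1999couverture}: Zhang gives the rate-one $L^2$ bound on $Z^n_T$ (an integrated second-moment control), not a fourth-moment sup control, and the Gamma blow-up $\partial_{yy}P(t,y)\sim(T-t)^{-1/2}$ near $(T,K)$ makes the ``slight strengthening'' you mention a real piece of work --- essentially a new lemma, not a citation. It can be done (the singularity is tamed by the probability that $Y$ visits a shrinking neighbourhood of $K$), but the paper avoids this entirely by importing the weak limit and the $L^2$ convergence as black boxes. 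If you want to keep your approach, isolate the fourth-moment UI as a standalone lemma with its own proof; otherwise, the two citations give the result immediately.
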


\noindent The proof of Proposition \ref{eqstrat} follows easily from previous works. We can first obtain the convergence in law towards $Z^*_{a,s}$ using for example the results of \cite{hayashi2005evaluating}. Indeed, up to localization, we can assume that $\sigma^X$, $\sigma^Y$, $b^X$ and $b^Y$ are bounded. Then the integrability conditions in the mentioned reference are obviously satisfied and the convergence follows. 
For \eqref{moment2}, it suffices to use Theorem  2.4.1 in
\cite{zhang1999couverture} where the convergence of the $L^2$ norm of the
 normalized error under the original measure is provided.\\

\noindent Finally, note that the discretization rule based on equidistant trading times will not be of interest for us since the associated $s_t$ process vanishes  and so the expectation of the limiting variable is zero.

\section{Asymptotic optimality: a preliminary approach}\label{sec: optimality}

Our viewpoint is that the trader's priority is to get a small hedging error. However, once this error is suitably controlled, he may try to take
advantage of the directional views he has on the market. Hence,
adopting the asymptotic approximation under which the first two moments of the renormalized hedging error are given by
those of $Z^{*}_{a,s}$, we aim at maximizing $\bbE[Z^{*}_{a,s}]$ while keeping $\bbE[(Z^{*}_{a,s})^2]$ reasonably small. This very problem is treated in Section \ref{sec: optimality2}.\\

\noindent Here, as a first step, we consider the approximation for $\bbE[(Z^{*}_{a,s})^2]$ given by $\bbE[(Z^{*,c}_{a,s})^2]$, where $Z^{*,c}_{a,s}$ denotes the sum of the two integrals with respect to the Brownian motions $W^Y$ and $B$ in the definition of $Z^{*}_{a,s}$ in Equation \eqref{eqn: stable limit}, that is
\begin{equation*}
Z^{*,c}_{a,s}=\frac{1}{3}\int_0^T s_t\sigma_t^YdW^Y_t + \frac{1}{\sqrt{6}}\int_0^T\paren{a_t^2 - \frac{2}{3}s_t^2}^{1/2}\sigma^Y_tdB_t.
\end{equation*}
To do so, we place ourselves in this section under the additional admissibility condition that the renormalized hedging error weakly converges in the sense of \eqref{condstabconv} and we take $s_t$ and $a_t^2$ as the processes in the limit \eqref{condstabconv} (so $s_t$ and $a_t^2$ are uniquely defined). Replacing $\bbE[(Z^{*}_{a,s})^2]$ by $\bbE[(Z^{*,c}_{a,s})^2]$ is technically very convenient but in practice quite arguable since this approximation is meaningful only when the drift is small. However, our aim here is only to have a first rough idea about the form of the optimal discretization rules. Since we wish to get
the moment of order one large while that of order two remains controlled, we consider that we want to maximize the so-called modified Sharpe ratio $S$ defined by 
\begin{equation*}
S=S(a,s)=\frac{\bbE[Z^{*}_{a,s}]}{\sqrt{\bbE[(Z^{*,c}_{a,s})^2]}}. 
\end{equation*}
This ratio is said to be modified since we use $\bbE[(Z^{*,c}_{a,s})^2]$ instead of the variance of $Z^*_{a,s}$.\\ 

\noindent Hence we are looking for strategies which maximize $S$. To do so, 
we now introduce the notion of nearly efficient (modified) Sharpe ratio.
\begin{dfn}[Nearly efficient Sharpe ratio]
The value $S^*\in\mathbb{R}$ is said to be a nearly efficient Sharpe ratio if:
\begin{itemize}
\item For any admissible discretization rule with associated limiting processes $a$ and $s$, the associated modified Sharpe ratio $S(a,s)$ satisfies
$$S(a,s)\leq S^*.$$
\item For any $\eta>0$, there exists a discretization rule with associated limiting processes $a$ and $s$ such that
$$S(a,s)\geq S^*-\eta.$$
\end{itemize}
\end{dfn} 
\noindent We only consider nearly efficient ratios since our strategies will not enable us to attain exact efficiency (which would corresponds to $\eta=0$ in the previous definition). Of course the slight difference between efficient and nearly efficient ratios has no importance in practice.\\

\noindent In our setting, for any limiting variable $Z^{*}_{a,s}$, we have
$$S(a,s)=\frac{\bbE\Big[\frac{1}{3}\int_0^T s_tb^Y_tdt\Big]}{\Big(\bbE\Big[\frac{1}{9}\int_0^Ts_t^2(\sigma^Y_t)^2dt+\frac{1}{6}\int_0^T\big(a_t^2-\frac{2}{3}s_t^2\big)(\sigma^Y_t)^2dt\Big]\Big)^{1/2}}.$$
Now, the admissibility condition $a_t^2 \geq s_t^2$ implies
$$S(a,s)\leq\frac{\sqrt{6}}{3}\frac{\bbE\Big[\int_0^T s_tb^Y_tdt\Big]}{\Big(\bbE\Big[\int_0^Ts_t^2(\sigma^Y_t)^2dt\Big]\Big)^{1/2}}$$
and Cauchy-Schwarz inequality gives  
$$S(a,s)\leq \frac{\sqrt{6}}{3}\Big(\bbE\Big[\int_0^T\big(\frac{b^Y_t}{\sigma^Y_t}\big)^2dt\Big]\Big)^{1/2}.$$
This provides an upper bound for the modified Sharpe ratio. We now wish to find a discretization rule enabling to (almost) attain this upper bound.
To achieve this, our rule must be so that for the associated processes $a_t$ and $s_t$, the inequalities used above ($a_t^2 \geq s_t^2$ and Cauchy-Schwarz) become almost equalities. This means that $a_t$ should be close to $s_t$ and $s_t$ essentially proportional to $b_t^Y/(\sigma^Y_t)^2$. Furthermore, we want the product $s_tb^Y_t$ to be essentially positive in order to get a positive modified Sharpe ratio. If we look for this rule among the hitting times based schemes specified by two processes $(\dl_t,\ul_t)$, Lemma \ref{lem : hitting strat univ} implies that
\begin{itemize}
\item the difference $\dl_t-\ul_t$ should be essentially proportional to $b_t/(\sigma^Y_t)^2$,
\item the product $\dl_t\ul_t$ should be negligible compared to $(\dl_t-\ul_t)^2$,
\item the term $(\dl_t-\ul_t)b^Y_t$ should be essentially positive.
\end{itemize}  
From these remarks together with Proposition \ref{prop : hitting strats}, we easily deduce the following theorem.

\begin{theo}\label{theo_unbiased}
Suppose that for all $t\leq T$, $b^Y_t\neq 0$. Then the value
\begin{equation*}
\frac{\sqrt{6}}{3}\Big(\bbE\Big[\int_0^T\big(\frac{b^Y_t}{\sigma^Y_t}\big)^2dt\Big]\Big)^{1/2}
\end{equation*}
is a nearly efficient Sharpe ratio. It is approximately
attained by the discretization rule defined for $\lambda>0$ by
\begin{equation}\label{discrule1}
\tau^{n,\lambda}_{j+1}= \inf \Big\{t > \tau^{n,\lambda}_j; X_t - X_{\tau^{n,\lambda}_j} =  -\frac{b^Y_{t}}{(\voly_{t})^2}e^{\lambda}\varepsilon_n\text{ or }\frac{b^Y_{t}}{(\voly_{t})^2}e^{-\lambda}\varepsilon_n\Big\}, \quad \tau^n_0 = 0.
\end{equation}
Indeed, 
\begin{equation*}
\lim_{\lambda\to +\infty} S(\lambda) = \frac{\sqrt{6}}{3}\Big(\bbE\Big[\int_0^T\big(\frac{b^Y_t}{\sigma^Y_t}\big)^2dt\Big]\Big)^{1/2},
\end{equation*}
where $S(\lambda)$ denotes the modified Sharpe ratio obtained for the law of the variable $Z^*_{a,s}$ associated to the discretization
rule \eqref{discrule1} with parameter $\lambda$.
\end{theo}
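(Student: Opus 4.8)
The plan is to verify the two bullet points in the definition of a nearly efficient Sharpe ratio. The upper bound part is already done in the text: for any admissible rule the chain of inequalities $S(a,s) \le \frac{\sqrt 6}{3}\,(\mathbb E[\int_0^T (b^Y_t/\sigma^Y_t)^2\,dt])^{1/2}$ holds by using $a_t^2\ge s_t^2$ and Cauchy--Schwarz. So the real content is the second bullet: for every $\eta>0$ exhibit an admissible rule with $S \ge S^\ast-\eta$. We take the family \eqref{discrule1} indexed by $\lambda$ and show $\lim_{\lambda\to+\infty}S(\lambda)=S^\ast$, which suffices.

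First I would identify the limiting processes $a$ and $s$ attached to the rule \eqref{discrule1} via Proposition~\ref{prop : hitting strats}. The rule \eqref{discrule1} is exactly of the hitting-time form \eqref{def: hitting strat} with $\dl_t = \frac{b^Y_t}{(\sigma^Y_t)^2}e^{\lambda}$ and $\ul_t = \frac{b^Y_t}{(\sigma^Y_t)^2}e^{-\lambda}$ when $b^Y_t>0$ (and with the roles of the two barriers swapped, up to sign, when $b^Y_t<0$; since $b^Y_t$ is continuous and nonvanishing it keeps a constant sign on $[0,T]$, so one of the two cases applies globally). I need to check that these $\dl,\ul$ are positive, continuous and satisfy the integrability condition \eqref{regbarrier}: positivity and continuity are immediate from Assumption~\ref{assump_model} and $b^Y_t\ne0$; the integrability $\mathbb E[\int_0^T (1+\rho_t^2)(\ul_t\vee\dl_t)^2(\sigma^Y_t)^2\,dt]<\infty$ reduces, since $(\ul_t\vee\dl_t)^2(\sigma^Y_t)^2 = e^{2|\lambda|}(b^Y_t/\sigma^Y_t)^2 = e^{2|\lambda|}\rho_t^2$, to $\mathbb E[\int_0^T(1+\rho_t^2)\rho_t^2\,dt]<\infty$. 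This is where I would need a small additional hypothesis or a localization argument, since Assumption~\ref{assump_model} only gives $\mathbb E[\int_0^T\rho_t^2\,dt]<\infty$; presumably the intended reading is that $b^Y$ (hence $\rho$, given $\sigma^Y$ positive continuous) is bounded enough, or one argues by a stopping-time localization and a limiting argument. Granting \eqref{regbarrier}, Proposition~\ref{prop : hitting strats} gives admissibility and the explicit limiting processes $s_t = \dl_t - \ul_t = \frac{b^Y_t}{(\sigma^Y_t)^2}(e^{\lambda}-e^{-\lambda})$ and $a_t^2 = s_t^2 + \dl_t\ul_t = s_t^2 + \big(\frac{b^Y_t}{(\sigma^Y_t)^2}\big)^2$ (the cross term $\dl_t\ul_t$ being independent of $\lambda$).

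Next I would simply plug these into the formula for $S(a,s)$ stated in the text. Writing $c_t := b^Y_t/(\sigma^Y_t)^2$ and $\mu:=e^{\lambda}-e^{-\lambda}$, one gets $s_t b^Y_t = \mu\, c_t b^Y_t = \mu\,\rho_t^2\ge0$, so the numerator is $\frac{\mu}{3}\mathbb E[\int_0^T\rho_t^2\,dt]$. For the denominator: $\frac19 s_t^2(\sigma^Y_t)^2 = \frac{\mu^2}{9}\rho_t^2$ and $\frac16(a_t^2-\frac23 s_t^2)(\sigma^Y_t)^2 = \frac16(\frac13 s_t^2 + c_t^2)(\sigma^Y_t)^2 = \frac{\mu^2}{18}\rho_t^2 + \frac16 c_t^2(\sigma^Y_t)^2$. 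Hence $\mathbb E[(Z^{*,c}_{a,s})^2] = (\frac{\mu^2}{9}+\frac{\mu^2}{18})\mathbb E[\int_0^T\rho_t^2\,dt] + \frac16\mathbb E[\int_0^T c_t^2(\sigma^Y_t)^2\,dt] = \frac{\mu^2}{6}\mathbb E[\int_0^T\rho_t^2\,dt] + \frac16\mathbb E[\int_0^T\rho_t^2\,dt]$, using $c_t^2(\sigma^Y_t)^2 = \rho_t^2$. Therefore
\begin{equation*}
S(\lambda) = \frac{\tfrac{\mu}{3}\,\mathbb E[\int_0^T\rho_t^2\,dt]}{\big(\tfrac{\mu^2+1}{6}\,\mathbb E[\int_0^T\rho_t^2\,dt]\big)^{1/2}} = \frac{\sqrt6}{3}\,\frac{\mu}{\sqrt{\mu^2+1}}\,\Big(\mathbb E\Big[\int_0^T\rho_t^2\,dt\Big]\Big)^{1/2}.
\end{equation*}
As $\lambda\to+\infty$, $\mu\to+\infty$ and $\mu/\sqrt{\mu^2+1}\to1$, so $S(\lambda)\to\frac{\sqrt6}{3}(\mathbb E[\int_0^T\rho_t^2\,dt])^{1/2} = S^\ast$. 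Combined with the upper bound this proves $S^\ast$ is a nearly efficient Sharpe ratio and is approximately attained by \eqref{discrule1}, completing the proof.

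The main obstacle I anticipate is the integrability check \eqref{regbarrier} for the barriers $\varepsilon_n\dl_t,\varepsilon_n\ul_t$: it needs $\mathbb E[\int_0^T(1+\rho_t^2)\rho_t^2\,dt]<\infty$, which is strictly stronger than what Assumption~\ref{assump_model} provides, so either an implicit boundedness assumption on $b^Y$ is being used, or the result should be read up to a localizing sequence of stopping times with a passage to the limit in $\eta$. Everything else is a bookkeeping computation that collapses nicely because the $\lambda$-dependence enters only through the single scalar $\mu = e^\lambda-e^{-\lambda}$ while the troublesome cross term $\dl_t\ul_t=c_t^2$ is $\lambda$-free and contributes the ``$+1$'' under the square root that is washed out in the limit.
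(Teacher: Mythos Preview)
Your proposal is correct and follows exactly the approach the paper intends: the paper itself gives no formal proof beyond the sentence ``From these remarks together with Proposition~\ref{prop : hitting strats}, we easily deduce the following theorem,'' and you have filled in precisely those easy deductions, including the closed-form expression $S(\lambda)=\tfrac{\sqrt6}{3}\tfrac{\mu}{\sqrt{\mu^2+1}}(\bbE[\int_0^T\rho_t^2dt])^{1/2}$ which makes the limit transparent. Your observation about the integrability condition~\eqref{regbarrier} requiring $\bbE[\int_0^T(1+\rho_t^2)\rho_t^2\,dt]<\infty$ is a genuine gap that the paper also glosses over; it is not a flaw in your argument relative to the paper's.
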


\noindent This result provides simple and explicit strategies for optimizing the modified Sharpe ratio. It is also very easy to interpret. Indeed, we see that in order to take advantage of the drift, one needs to consider asymmetric barriers. The limitation is that we do not really control accurately the magnitude of the hedging error at maturity.\\

\noindent The asymptotic setting simply means that we require
$\lambda$ to be quite large while $e^{\lambda}\varepsilon_n$ is
small. When using such discretization rule in practice, it is reasonable to consider
that the trader fixes a maximal value for the asymmetry
between the barriers controlled by $\lambda$. This way he can choose
the parameter $\lambda$. Then $\varepsilon_n$ is set to match the bound on $\bbE[(Z^{*,c}_{a,s})^2]$ that the trader does not want to exceed. 



\section{Asymptotic expectation-error optimization}\label{sec: optimality2}

In this section, we now consider a natural expectation-error type criterion in order to optimize our discretization rules. To do so, we work in an asymptotic setting where we are looking for discretization rules which are optimal in the expectation-error sense for their associated limiting random variable
$Z^*_{a,s}$. Before giving our main result, we introduce some definitions inspired by classical portfolio theory. 
\begin{dfn}[Non dominated couple]\label{def2}
A couple $(m,v)\in(\mathbb{R}^+)^2$ is said to be non dominated if there exists no admissible discretization rule such that its associated limiting random variable $Z^*_{a,s}$ satisfies
$$\bbE[Z^*_{a,s}]\geq m,~~\bbE[(Z^*_{a,s})^2]<v.$$ The set of non dominated couples is called the non domination domain.
\end{dfn}
\begin{dfn}[Nearly efficient couple]\label{def3}
A couple $(m,v)\in(\mathbb{R}^+)^2$ is said to be nearly efficient if it is non dominated and for any $\eta>0$, there exists an admissible discretization rule such that its associated limiting random variable $Z^*_{a,s}$ satisfies 
$$\bbE[Z^*_{a,s}]=m,~~\bbE[(Z^*_{a,s})^2]\leq v+\eta.$$
It is efficient if we can take $\eta=0$.
\end{dfn}
\noindent We introduce the set $\mathcal{Z}_T$ of random variables of the form
\begin{equation}\label{solprob}
Z_{T,s}=\frac{1}{3}\int_0^T s_tdY_t + \frac{1}{3\sqrt{2}}\int_0^T s_t\sigma^Y_tdB_t, 
\end{equation}
where $B$ is a Brownian motion independent of $\calF$ and $s_t$ is an adapted continuous process such that 
\begin{equation}\label{regul}
\bbE\big[\int_0^T\big(1+(\rho_t)^2\big)s_t^2(\sigma^Y_t)^2dt\big]< \infty.
\end{equation} 
We also define the notions of non dominated and efficient couples with respect to $\mathcal{Z}_T$. The definitions are the same as Definition \ref{def2} and Definition \ref{def3} except that we replace
``admissible discretization rule'' by ``process $s$ satisfying \eqref{regul}'' and ``its associated limiting random variable $Z^*_{a,s}$" by $Z_{T,s}$.\\

\noindent We can now state our main result which enables us to compute efficient discretization rules. 

\begin{theo}\label{theo : main result}
The following results hold:
\begin{itemize}
\item The non domination domain coincides with the non domination domain with respect to $\mathcal{Z}_T$.
\item Let $(m^*,v^*)$ be an efficient couple with respect to $\mathcal{Z}_T$, with associated optimal process $s^*$. Then $(m^*,v^*)$ is a nearly efficient couple. More precisely, let $\delta>0$ and $(\dl^\delta_t, \ul^\delta_t)$ be defined by
\begin{equation*}
\dl^\delta_t - \ul^\delta_t = s^*_t, \quad (\dl^\delta_t)^2 -\dl^\delta_t\ul^\delta_t + (\ul^\delta_t)^2 = (s^*_t)^2 + \frac{6\delta}{(\sigma^Y_t)^2},
\end{equation*}
that is
\begin{equation}\label{eqn : opt barriers}
\dl^\delta_t =  \sqrt{\frac{(s^*_t)^2}{4} + \frac{6\delta}{(\sigma_t^Y)^2}} + \frac{s^*_t}{2},  \quad \ul^\delta_t = \sqrt{\frac{(s^*_t)^2}{4} + \frac{6\delta}{(\sigma_t^Y)^2}} - \frac{s^*_t}{2}.
\end{equation}
Then the hitting times based discretization rule specified through the barriers $(\dl^\delta_t, \ul^\delta_t)$ 
satisfies
\begin{equation*}
\bbE[Z^*_{a,s}] = m^*,\quad  \bbE[(Z^*_{a,s})^2] = v^* +\delta T.
\end{equation*}
\end{itemize} 
\end{theo}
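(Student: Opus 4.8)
The plan is to prove the two assertions in turn, reducing everything to the structural results already established. For the first bullet, I would show the two non-domination domains coincide by a double inclusion. One inclusion is essentially free: any $Z_{T,s}\in\mathcal Z_T$ is of the form $Z^*_{a,s}$ with the choice $a_t^2 = s_t^2$ (so that $a_t^2 - \tfrac23 s_t^2 = \tfrac13 s_t^2$, giving the coefficient $\tfrac{1}{\sqrt 6}\cdot\tfrac{1}{\sqrt 3} = \tfrac{1}{3\sqrt 2}$ in front of $dB_t$, matching \eqref{solprob}), and this pair $(a,s)$ satisfies $a_t^2 \geq s_t^2$ and the integrability condition \eqref{asmp: integrability a s}; by Lemma~\ref{lem : hitting strat univ}, or rather by a limiting version of it since here $a_t^2 = s_t^2$ is the boundary case, such a couple is approximable by admissible rules. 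For the reverse inclusion I would argue that for any admissible rule, the first two moments of $Z^*_{a,s}$ depend only on $\bbE[\int_0^T s_t b^Y_t dt]$ and on $\bbE[\int_0^T(\tfrac19 s_t^2 + \tfrac16(a_t^2 - \tfrac23 s_t^2))(\sigma^Y_t)^2 dt]$; since $a_t^2\geq s_t^2$, the second moment is minimized, for fixed $s$, by taking $a_t^2 = s_t^2$, which is exactly the value realized by $Z_{T,s}\in\mathcal Z_T$. Hence no admissible rule can dominate a couple that is non dominated within $\mathcal Z_T$, and conversely; the two domains coincide.

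For the second bullet, the key computation is to verify that the hitting-times rule built from the barriers $(\dl^\delta_t,\ul^\delta_t)$ in \eqref{eqn : opt barriers} produces, via Proposition~\ref{prop : hitting strats}, the limiting processes $s_t = \dl^\delta_t - \ul^\delta_t = s^*_t$ and $a_t^2 = (s^*_t)^2 + \dl^\delta_t\ul^\delta_t$. From the defining relations, $\dl^\delta_t\ul^\delta_t = \big(\sqrt{(s^*_t)^2/4 + 6\delta/(\sigma^Y_t)^2}\big)^2 - (s^*_t)^2/4 = 6\delta/(\sigma^Y_t)^2$, so $a_t^2 = (s^*_t)^2 + 6\delta/(\sigma^Y_t)^2$. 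I would then plug these into the moment formulas for $Z^*_{a,s}$: the first moment is $\tfrac13\bbE[\int_0^T s^*_t b^Y_t dt] = \bbE[Z_{T,s^*}] = m^*$, unchanged because the $dB_t$-coefficient is immaterial for the expectation; the second moment is $\bbE[\tfrac19\int_0^T (s^*_t)^2(\sigma^Y_t)^2 dt + \tfrac16\int_0^T(a_t^2 - \tfrac23(s^*_t)^2)(\sigma^Y_t)^2 dt]$, and substituting $a_t^2 - \tfrac23(s^*_t)^2 = \tfrac13(s^*_t)^2 + 6\delta/(\sigma^Y_t)^2$ gives $\bbE[\tfrac19\int_0^T(s^*_t)^2(\sigma^Y_t)^2 dt + \tfrac{1}{18}\int_0^T(s^*_t)^2(\sigma^Y_t)^2 dt + \delta T] = \bbE[(Z_{T,s^*})^2] + \delta T = v^* + \delta T$. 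One must also check admissibility of this rule: the barriers $\dl^\delta,\ul^\delta$ are positive (for $\delta>0$, since the square root strictly dominates $|s^*_t|/2$), continuous and adapted, and condition \eqref{regbarrier} follows from \eqref{regul} together with boundedness of $\delta/(\sigma^Y_t)^2$ on the relevant set — or, more carefully, from an integrability estimate that I would have to confirm reduces to \eqref{regul}.

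Finally, the "nearly efficient" conclusion of the statement follows by letting $\delta \downarrow 0$: the couple $(m^*, v^* + \delta T)$ is realized exactly by an admissible rule for every $\delta > 0$, which is precisely the defining property in Definition~\ref{def3}, once one also knows $(m^*,v^*)$ itself is non dominated — and that is exactly what the first bullet delivers, since $(m^*,v^*)$ is efficient, hence non dominated, with respect to $\mathcal Z_T$, hence non dominated outright.

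\textbf{Main obstacle.} The delicate point is not the algebra of the barriers — that is a short and clean computation — but the verification that the constructed hitting-times rule is genuinely \emph{admissible} in the sense of Condition~\ref{Condition I}, i.e. that Proposition~\ref{prop : hitting strats} applies and yields convergence of the first two moments, not merely convergence in law. This requires checking the integrability hypothesis \eqref{regbarrier} for the stochastic barriers $\dl^\delta_t, \ul^\delta_t$, whose dependence on $1/(\sigma^Y_t)^2$ could be problematic near points where $\sigma^Y$ is small; I expect the resolution is that $\ul^\delta_t \vee \dl^\delta_t \lesssim |s^*_t| + \sqrt{\delta}/\sigma^Y_t$, and then a careful estimate shows the $\sqrt\delta/\sigma^Y_t$ contribution is controlled — but pinning down exactly which moment bound closes the argument, and confirming it is implied by \eqref{regul} and Assumption~\ref{assump_model} (in particular the finiteness of $\bbE[\int_0^T \rho_t^2 dt]$), is where the real work lies, and is presumably handled in the appendix.
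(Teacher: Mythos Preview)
Your approach is essentially the same as the paper's: the first bullet is handled by the observation that $a_t^2\geq s_t^2$ makes $\mathcal Z_T$ the ``minimal-second-moment'' boundary for fixed $s$, together with approximation by admissible rules (the paper uses Lemma~\ref{lem : hitting strat univ} with $a_t^2=s_t^2+\eta$ rather than a direct limiting argument, but the logic is identical); the second bullet is exactly the barrier-product computation $\dl^\delta_t\ul^\delta_t=6\delta/(\sigma^Y_t)^2$ followed by substitution into the moment formulas.

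One small slip in your write-up: in the second-moment computation you write the contribution of $\tfrac13\int_0^T s^*_t\,dY_t$ as $\tfrac19\bbE\big[\int_0^T (s^*_t)^2(\sigma^Y_t)^2\,dt\big]$, which is its quadratic variation, not its square; the correct term is $\tfrac19\bbE\big[\big(\int_0^T s^*_t\,dY_t\big)^2\big]$, and these differ since $Y$ has a drift. This does not affect your conclusion, because $Z_{T,s^*}$ carries the \emph{same} $\tfrac13\int s^*\,dY$ piece, and the difference $\bbE[(Z^*_{a,s})^2]-\bbE[(Z_{T,s^*})^2]$ involves only the $dB$-contributions, which you compute correctly as $\delta T$. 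The paper writes the term in the correct form.

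As for your ``main obstacle'': the integrability \eqref{regbarrier} is in fact not delicate here. Since $(\ul^\delta_t\vee\dl^\delta_t)^2\leq C\big((s^*_t)^2+\delta/(\sigma^Y_t)^2\big)$, multiplying by $(\sigma^Y_t)^2$ gives $(\ul^\delta_t\vee\dl^\delta_t)^2(\sigma^Y_t)^2\leq C\big((s^*_t)^2(\sigma^Y_t)^2+\delta\big)$, so the potentially troublesome $1/(\sigma^Y_t)^2$ cancels; the remaining bound is controlled by \eqref{regul} and the Sharpe-ratio integrability in Assumption~\ref{assump_model}. The paper accordingly dispatches admissibility in one line (``clear from Proposition~\ref{prop : hitting strats}'').
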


\noindent We have therefore reduced the impulse control problem of finding the optimal rebalancing times to a classical expectation-error optimization with continuous dynamics. The solutions of this problem can be obtained by solving for $\mu>0$
\begin{equation*}
\inf_{(s_t)}\big\{-\bbE[Z_{T,s}]+\mu\bbE[(Z_{T,s})^2]\big\},
\end{equation*}
for which we can apply the theory of linear-quadratic optimal control, see for example \cite{lim2002mean,zhou2000continuous}. As shown in the next section, we can even obtain closed formulas in the case where the underlying has deterministic drift and volatility. Note that again, our barriers strategies enable us to attain only nearly efficient couples. Indeed, reaching efficient couples would lead to the use of degenerate barriers with $\delta=0$. This does not make sense in practice, however $\delta$ can of course be selected small.\\

\noindent In practice, once he has chosen the target nearly efficient couple he wants to reach, the trader needs to select $\delta$ and $\varepsilon_n$. Two ideas enabling to avoid microstructure effects seem natural and easy to implement:
\begin{itemize}
\item Fix a minimal time between two rebalancings $t_{min}$. After a rebalancing at a random time say $\tau$, wait $t_{min}$ and then apply the strategy with $\delta=0$ (that is rebalance immediately if at $t=\tau+t_{min}$, $X_t-X_{\tau}$ is not inside the interval $(-\varepsilon_n \dl^0_t ,\varepsilon_n \ul^{0}_t)$ and wait for the exit time otherwise). The parameter $\varepsilon_n$ can be chosen according to the average number of transactions the trader is willing to make.
\item Fix (roughly) a minimal distance for the closest barrier after a rebalancing. Then compute $\delta$ and $\varepsilon_n$ according to the general level of volatility $\sigma_t^Y$ so that they (approximately) lead to this bound and the average number of transactions the trader is willing to make.
\end{itemize}

\section{One explicit example : Black-Scholes model with time varying coefficients}\label{sec: bs}

In this section, we explain how our method can be applied in practice through the simple example of the Black-Scholes model with time varying coefficients. So we assume the underlying follows the dynamics
$$ dY_t = Y_t(b_t dt + \sigma_t dW_t),
$$
where $b_t$ and $\sigma_t$ are continuous deterministic functions. We also assume $b_t$ and $\sigma_t$ do not vanish. Using the theory of linear-quadratic optimal control, we give an explicit solution for the problem of designing optimal rebalancing times in this specific setting.

\subsection{Explicit formulas}

\noindent We aim at finding the efficient couples for the controlled random variables of the form 
$Z_{T,s}$ as in \eqref{solprob}. 
Following \cite{zhou2000continuous}, such problem is classically recast as follows: solving for any $\mu>0$ the optimization problem
\begin{equation*}
\inf_{(s_t, 0\leq t\leq T)} -\bbE[{Z_{T,s}}] + \mu \bbE[(Z_{T,s})^2]=\inf_{(s_t, 0\leq t\leq T)} \mu \bbE\big[\big(Z_{T,s}-\frac{1}{2\mu}\big)^2\big]-\frac{1}{4\mu}.
\end{equation*}
Let us define the family of processes of the form
$$  
d\check Z_t = s_tY_t(\tilde b_tdt + \tilde\sigma_t dW_t),~~\check Z_0 =0,
$$  
with $\tilde b_t=b_t/3$, $\tilde\sigma_t=\sigma_t/3$ and $s_t$ adapted continuous.
Using obvious computations, the independence between the process $B$ in Equation \eqref{solprob} and $\mathcal{F}$, and the fact that $s_t$ is $\mathcal{F}$-adapted, we get $\bbE[\check Z_T]=\bbE[Z_{T,s}]$ and
$$\mu\bbE\big[\big(\check Z_T-\frac{1}{2\mu}\big)^2\big]=\mu\bbE\big[\big(Z_{T,s}-\frac{1}{2\mu}\big)^2\big]-\frac{\mu}{18}\bbE\big[\int_0^T(s_t\sigma_t Y_t)^2dt\big].$$
Hence, we can equivalently solve
\begin{equation*}
\inf_{(s_t, 0\leq t\leq T)}\bbE\big[ \mu \tilde Z_T^2+\frac{\mu}{2}\int_0^T(s_t\tilde\sigma_t Y_t)^2dt\big],
\end{equation*}
with 
$$  
d\tilde Z_t = s_tY_t(\tilde b_tdt + \tilde\sigma_t dW_t),~~\tilde Z_0 =-\frac{1}{2\mu}.
$$  

\noindent Using the results of \cite{zhou2000continuous} which are summarized in Theorem \ref{theoxyz} in Appendix \ref{append: LQ}, the optimal control $s_t^*$ and optimally controlled process $\tilde Z^*_t$ satisfy
\begin{equation*}\label{s_star}
s^*_tY_t = -\frac{1}{\tilde b_t}\frac{\dot{P_t}}{P_t}\tilde Z^*_t,
\end{equation*}
where $P_t$ is the solution of the (ordinary) differential equation
$$\dot{P_t}=\rho_t^2\frac{P_t^2}{P_t+\mu},~~P_T=2\mu,$$
with $\rho_t = b_t/\sigma_t$. The solution of this equation is given by
$$P_t=\frac{\mu}{L\Big(\frac{1}{2}\text{exp}\big(\int_t^T\rho_s^2ds+\frac{1}{2}\big)\Big)},$$
with $L$ is the inverse function of $x\mapsto xe^x$. Moreover, the optimal process $\tilde Z^*$ satisfies 
$$\frac{d\tilde Z^*_t}{\tilde Z^*_t}= -\frac{\dot{P_t}}{P_t} (dt + \frac{1}{\rho_t}dW_t),~\tilde Z^*_0 = -\frac{1}{2\mu}.$$
Therefore, we obtain
$$\bbE[\tilde Z^*_T]= -\frac{1}{2\mu}\frac{P_0}{P_T}.$$
Using Theorem \ref{theoxyz}, we get
\begin{equation*}
\bbE\Big[(\tilde Z^*_T)^2 + \frac{1}{2}\int_0^T(s_t^* Y_t\tilde\sigma_t)^2 dt\Big] =\big(\frac{1}{2\mu}\big)^2\frac{P_0}{P_T}.
\end{equation*}
Consequently, we have that the optimal variable $Z_{T,s^*}$ satisfies
$$\bbE[Z_{T,s^*}]=\frac{1}{2\mu}\big(1-\frac{P_0}{P_T}\big)$$ and
$$\bbE\big[\big(Z_{T,s^*}-\frac{1}{2\mu}\big)^2\big]=\big(\frac{1}{2\mu}\big)^2\frac{P_0}{P_T}.$$ Hence
$$\bbE[(Z_{T,s^*})^2]=\big(\frac{1}{2\mu}\big)^2(1-\frac{P_0}{P_T}).$$
We have thus proved the following proposition.
\begin{prop}
In the Black-Scholes model with time varying coefficients, the efficient points are the couples of the form
$$(m,m^2\frac{P_T}{P_T-P_0}),$$ with $m>0$ (remark that the ratio
$\frac{P_T}{P_T-P_0}$ does not depend on $\mu$).
Furthermore, the associated process $s_t^*$ enabling to compute optimal rules according to Theorem \ref{theo : main result} is explicitly given by 
\begin{equation*}
\frac{1}{3}s^*_tY_t = -\frac{1}{b_t}\frac{\dot{P_t}}{P_t}\tilde Z^*_t,
\end{equation*}
with
$$\frac{d\tilde Z^*_t}{\tilde Z^*_t}= -\frac{1}{b_t}\frac{\dot{P_t}}{P_t} \frac{dY_t}{Y_t},~\tilde Z^*_0 = -\frac{1}{2\mu}.$$

\end{prop}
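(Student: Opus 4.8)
The plan is to reduce the search for the efficient frontier of $\mathcal{Z}_T$ to a one-parameter family of unconstrained linear-quadratic stochastic control problems, solve each of them in closed form through a scalar Riccati ODE, and then eliminate the multiplier. First I would invoke the first item of Theorem~\ref{theo : main result} to work entirely with the class $\mathcal{Z}_T$, so that by Definitions~\ref{def2}--\ref{def3} an efficient couple $(m^*,v^*)$ is characterized by $v^* = \inf\{\bbE[(Z_{T,s})^2] : \bbE[Z_{T,s}] = m^*\}$. By the standard mean--second-moment duality of \cite{zhou2000continuous}, this constrained problem is equivalent to solving $\inf_{(s_t)}\{-\bbE[Z_{T,s}]+\mu\bbE[(Z_{T,s})^2]\}$ for $\mu>0$ and then optimizing over $\mu$; completing the square recasts the inner problem as $\inf_{(s_t)}\mu\bbE[(Z_{T,s}-\tfrac{1}{2\mu})^2]-\tfrac{1}{4\mu}$.

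The next step is to remove the exogenous Brownian motion $B$ appearing in \eqref{solprob}. Replacing $Z_{T,s}$ by the process $\check Z$ driven only by $W$, and using that $B$ is independent of $\calF$ while $s$ is $\calF$-adapted, one gets $\bbE[\check Z_T]=\bbE[Z_{T,s}]$ and the two second moments differ only by the term $\tfrac{1}{18}\bbE[\int_0^T(s_t\sigma_t Y_t)^2dt]$, again quadratic in the control. After shifting the initial value to $\tilde Z_0=-\tfrac{1}{2\mu}$, this turns the inner problem into the genuine LQ problem $\inf_{(s_t)}\bbE[\mu\tilde Z_T^2+\tfrac{\mu}{2}\int_0^T(s_t\tilde\sigma_t Y_t)^2dt]$ with state dynamics $d\tilde Z_t=s_tY_t(\tilde b_tdt+\tilde\sigma_t dW_t)$, $\tilde b_t=b_t/3$, $\tilde\sigma_t=\sigma_t/3$.

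I would then apply the LQ result recalled as Theorem~\ref{theoxyz} in Appendix~\ref{append: LQ}. This produces the scalar Riccati equation $\dot P_t=\rho_t^2 P_t^2/(P_t+\mu)$ with $P_T=2\mu$; one checks it has a strictly positive, finite solution on all of $[0,T]$ (the closed form through the Lambert function $L$ makes this transparent, and also shows that $P_0/P_T$ is a $\mu$-free constant in $(0,1)$), and reads off the optimal feedback $s_t^*Y_t=-\tilde b_t^{-1}(\dot P_t/P_t)\tilde Z_t^*$ together with the explicit linear SDE for $\tilde Z^*$. From that SDE, elementary computations give $\bbE[\tilde Z_T^*]=-\tfrac{1}{2\mu}(P_0/P_T)$ and, via the value function of Theorem~\ref{theoxyz}, $\bbE[(\tilde Z_T^*)^2+\tfrac12\int_0^T(s_t^*Y_t\tilde\sigma_t)^2dt]=(2\mu)^{-2}(P_0/P_T)$. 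Undoing the shift and the $B$-correction yields $\bbE[Z_{T,s^*}]=\tfrac{1}{2\mu}(1-P_0/P_T)$ and $\bbE[(Z_{T,s^*})^2]=(2\mu)^{-2}(1-P_0/P_T)$. Along the way I would check that $s^*$ satisfies \eqref{regul}, so that $Z_{T,s^*}\in\mathcal{Z}_T$; this follows from continuity and non-vanishing of $b$ and $\sigma$ together with Assumption~\ref{assump_model}.

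It then only remains to eliminate $\mu$. Writing $m=\tfrac{1}{2\mu}(1-P_0/P_T)$ and $v=(2\mu)^{-2}(1-P_0/P_T)$, one gets $v=m^2/(1-P_0/P_T)=m^2 P_T/(P_T-P_0)$. Since $\mu\mapsto m$ is a continuous strictly decreasing bijection of $(0,\infty)$ onto $(0,\infty)$, every $m>0$ arises for exactly one $\mu$, so the efficient frontier of $\mathcal{Z}_T$ is precisely $\{(m,m^2 P_T/(P_T-P_0)):m>0\}$, each point being non-dominated because $v$ equals the Lagrangian lower bound and attained by $s^*$. The displayed formulas for $s_t^*$ and $\tilde Z^*$ then follow by substituting $\tilde b_t=b_t/3$ and rewriting $(dt+\rho_t^{-1}dW_t)=b_t^{-1}\,dY_t/Y_t$. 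The part requiring the most care is the duality step: one must argue both that the Lagrangian bound is tight (which is exactly what attainment by $s^*$ provides) and that the family $\{\mu>0\}$ sweeps the whole frontier, i.e. bijectivity of $\mu\mapsto m$, together with solvability of the Riccati ODE on the full interval $[0,T]$.
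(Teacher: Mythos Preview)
Your proposal is correct and follows essentially the same route as the paper: recast the constrained problem as the Lagrangian family $\inf_s\{-\bbE[Z_{T,s}]+\mu\bbE[(Z_{T,s})^2]\}$, eliminate the independent Brownian $B$ to obtain a genuine one-dimensional LQ problem for $\tilde Z$, apply Theorem~\ref{theoxyz} to obtain the scalar Riccati ODE and the feedback $s_t^*Y_t=-\tilde b_t^{-1}(\dot P_t/P_t)\tilde Z_t^*$, compute the resulting first two moments, and then eliminate $\mu$. The extra care you flag (global solvability of the Riccati ODE, verification of \eqref{regul} for $s^*$, and bijectivity of $\mu\mapsto m$ to ensure the whole frontier is swept) are points the paper leaves implicit, so your write-up is slightly more complete but not a different argument.
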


\noindent Note that in practice, $\tilde Z^*$ is not observable. However, it can of course be approximated by a process $\tilde Z^{(*)}$ thanks to historical data, using for example a scheme of the form
\begin{equation*}
\tilde Z^{(*)}_{t_{i+1}} = \tilde Z^{(*)}_{t_i}\Big(1 -\frac{1}{b_{t_i}}\frac{\dot{P}_{t_i}}{P_{t_i}}\frac{Y_{t_{i+1}}-Y_{t_i}}{Y_{t_i}}\Big), \quad \tilde Z^{(*)}_0 = -\frac{1}{2\mu},
\end{equation*}
where the $t_i$ are the observation times of market data.

\newpage
\appendixtitleon
\appendixtitletocon
\begin{appendices}
\section{Proofs}
In the following $C$ denotes a constant which may vary from line to line. Note that we use several localization procedures in the proofs. We often give them in details since some of them are slightly unusual, in particular because of the fact that $\sigma^X$ may vanish at maturity.

\subsection{Proof of Proposition~\ref{proposition: stable conv}}

\noindent We start by proving in a very standard way the stable convergence of $\varepsilon_n^{-1}Z^n$ in $C[0,T]$, which is stronger than the weak convergence. More precisely, we show that
for any bounded continuous function $f$ on $C[0,T]$ and bounded random variable $U$ defined on $(\Omega, \calF, \bbF, \bbP)$,
\begin{equation*}
\lim_{n \to \infty} \bbE[Uf(\varepsilon_n^{-1}Z^n_{\cdot})] =  \bbE[Uf(Z^\ast_{\cdot})],
\end{equation*}
where $Z^*$ is defined by
\begin{equation*}
 Z^\ast_t = 
\frac{1}{3}\int_0^t s_udY_u + \frac{1}{\sqrt{6}}\int_0^t\paren{a_u^2 - \frac{2}{3}s_u^2}^{1/2}\sigma^Y_udB_u,
\end{equation*}
on an extension of $(\Omega, \calF, \bbF, \bbP)$ on which $B$ is a Brownian motion independent of all the other quantities. 
For $K>0$, we set
\begin{equation*}
 \alpha^K = \inf\{t > 0; |\rho_t| \geq K\} \wedge T.
\end{equation*}
Since $\rho$ is continuous on $[0,T]$ almost surely, 
\begin{equation*}
 \lim_{K \to \infty}\bbP [\alpha^K < T] = 0.
\end{equation*}
Now remark that
\begin{equation*}
\begin{split}
&| \bbE[Uf(\varepsilon_n^{-1}Z^n_{\cdot})] - \bbE[Uf(Z^\ast_{\cdot})]| \\
&\leq |\bbE[Uf(\varepsilon_n^{-1}Z^n_{\cdot})] -
 \bbE[Uf(\varepsilon_n^{-1}Z^n_{\cdot \wedge \alpha^K})]| 
+ |\bbE[Uf(\varepsilon_n^{-1}Z^n_{\cdot \wedge \alpha^K})]
- \bbE[Uf(Z^\ast_{\cdot \wedge \alpha^K})]| 
\\ & \hspace*{1cm}
+  |\bbE[Uf(Z^\ast_{\cdot \wedge \alpha^K})]
- \bbE[Uf(Z^\ast_{\cdot})]| 
\\ &\leq 4 \|f\|_\infty \|U\|_\infty\bbP[\alpha^K < T]
+ |\bbE[Uf(\varepsilon_n^{-1}Z^n_{\cdot \wedge \alpha^K})]
- \bbE[Uf(Z^\ast_{\cdot \wedge \alpha^K})]|.
\end{split}
\end{equation*}
Consequently, it suffices to show that for any $K>0$,
\begin{equation*}
\lim_{n \to \infty}  |\bbE[Uf(\varepsilon_n^{-1}Z^n_{\cdot \wedge \alpha^K})
- \bbE[Uf(Z^\ast_{\cdot \wedge \alpha^K})]| = 0.
\end{equation*}
Let 
\begin{equation*}
 \mathcal{E} = \exp\big\{
-\int_0^{\alpha^K} \rho_t dW^Y_t - \frac{1}{2}\int_0^{\alpha^K} \rho_t^2 dt 
\big\}.
\end{equation*}
Since $\bbE[\mathcal{E}] = 1$, the measure $\bbQ$ defined by
\begin{equation*}
 \frac{d\bbQ}{d\bbP} = \mathcal{E}
\end{equation*}
is a probability measure under which $Z^n_{\cdot \wedge \alpha^K}$ is a local martingale.
Under $\bbQ$, the uniform convergences in probability \eqref{asmp: limit 2} and 
\eqref{asmp: limit 3} on $[0,T]$ remain true.
Therefore by Theorem~IX.7.3 of \cite{jacod2003limit}, we have the stable convergence of $\varepsilon_n^{-1}Z^n_{\cdot \wedge \alpha^K}$
to $Z^\ast_{\cdot \wedge \alpha^K}$ under $\bbQ$.
Note that $\tilde{U} = U/\mathcal{E}$ is a $\bbQ$-integrable positive random variable and moreover, for all $A > 0$,
\begin{equation*}
\begin{split}
&|\bbE[Uf(\varepsilon_n^{-1}Z^n_{\cdot \wedge \alpha^K})]
- \bbE[Uf(Z^\ast_{\cdot \wedge \alpha^K})]| \\
\leq 
&|\bbE^\bbQ[\tilde{U}f(\varepsilon_n^{-1}Z^n_{\cdot \wedge \alpha^K})]
- \bbE^\bbQ[\tilde{U}f(Z^\ast_{\cdot \wedge \alpha^K})]| 
\\\leq &
|\bbE^\bbQ[(\tilde{U} \wedge A)f(\varepsilon_n^{-1}Z^n_{\cdot \wedge \alpha^K})]
- \bbE^\bbQ[(\tilde{U} \wedge A)f(Z^\ast_{\cdot \wedge \alpha^K})]| 
+ 2 \|f\|_\infty \bbE^{\bbQ}[\tilde{U}\mathbbm{1}_{\{\tilde{U} \geq A\}}].
\end{split}
\end{equation*}
The second term tends to $0$ uniformly in $n$ as $A \to \infty$.
The first term converges to $0$ due to the stable convergence under $\bbQ$ since $(\tilde{U} \wedge A)$ is a bounded random variable.
\\

\noindent
Now, we prove $a^2 \geq s^2$ under the additional condition \eqref{Condition add}.
Since $a$ and $s$ are continuous, it suffices to show
$a_t^2 \geq s_t^2$ for all $t \in [0,T)$.
Fix $T_0 < T$ and let
\begin{equation} \label{alphahat}
 \hat{\alpha}^K = \inf\{u >0; |b^X_u|\vee \sigma^X_u \vee \sigma^Y_u \geq K \text{ or } \sigma^X_u \leq 1/K\} \wedge T_0
\end{equation}
for $K > 0$.
Since $\sigma^X$ is positive and continuous on $[0,T_0]$, we have
\begin{equation} \label{alphahat conv}
 \lim_{K\to \infty}\bbP[\hat{\alpha}^K < T_0] = 0.
\end{equation}
Therefore, it suffices to show
\begin{equation}\label{eq: local a s}
 a_{u \wedge \hat{\alpha}^K}^2 \geq s^2_{u \wedge \hat{\alpha}^K}
\end{equation}
for all $u \geq 0$ and $K > 0$. 
Fix $K$ and define the probability measure $\hat{\bbQ}$ by

\begin{equation*}
 \frac{d\hat{\bbQ}}{d \bbP} = \exp\Big\{- \int_0^{\hat{\alpha}^K} \frac{b^X_u}{\sigma^X_u}d W^X_u - \frac{1}{2}\int_0^{\hat{\alpha}^K}\big(\frac{b^X_u}{\sigma^X_u} \big)^2 d u \Big\}.
\end{equation*}
Under $\hat{\bbQ}$, $X_{\cdot \wedge \hat{\alpha}^K}$ is a martingale with bounded quadratic variation.
Since $\hat{\bbQ}$ is equivalent to $\bbP$, it suffices to show \eqref{eq: local a s} under $\hat{\bbQ}$. \\

\noindent
By \eqref{Condition add}, there exists a subsequence $\{n(k)\}$ such that
\begin{equation*}
 \hat{\bbQ} \Big[\varepsilon_{n(k)}^{-4/3}\sup_{ j\geq 0}(\tau^{n(k)}_{j+1} \wedge
 T_0 - \tau^{n(k)}_j \wedge T_0) > \frac{1}{k} \Big] < \frac{1}{k}.
\end{equation*}
Let
\begin{equation*}
T_k = \inf\big\{u > 0, \varepsilon_{n(k)}^{-4/3}\sup_{ j\geq 0}(\tau^{n(k)}_{j+1} \wedge
 u - \tau^{n(k)}_j \wedge
 u) > \frac{1}{k}\big\} \wedge \hat{\alpha}^K.
\end{equation*}
Then
\begin{equation*}
\lim_{k \to \infty}\hat{\bbQ}[T_k < \hat{\alpha}^K] = 0
\end{equation*}
and so,
\begin{equation} \label{loc conv}
 \begin{split}
  & \varepsilon_{n(k)}^{-1}
\crochet{Z^{n(k)},Y}_{t \wedge T_k} \to \frac{1}{3}\int_0^{t \wedge \hat{\alpha}^K}
s_u (\sigma^Y_u)^2 du,\\
  &  \varepsilon_{n(k)}^{-2}
\crochet{Z^{n(k)}}_{t \wedge T_k} \to \frac{1}{6}\int_0^{t \wedge \hat{\alpha}^K}
a_u^2 (\sigma^Y_u)^2 du,
 \end{split}
\end{equation}
in probability as $k \to \infty$ for all $ t\geq 0$.
Let
\begin{equation*}
 \hat{\tau}^k_j = \tau^{n(k)}_j  \wedge T_k
\end{equation*}
for $j \geq 0$.
We now give three technical lemmas.
\begin{lem}\label{lem1}
Let $\kappa_u=(\sigma^Y_u/\sigma^X_u)^2$. We have
\begin{equation*}
\frac{1}{3} \varepsilon_{n(k)}^{-1}\sum_{j=0}^{N^{n(k)}_{t \wedge T_k}}
\kappa_{\hat{\tau}^k_j \wedge t}
\bbE^{\hat{\bbQ}}\Big[
(X_{\hat{\tau}^k_{j+1} \wedge t} - 
X_{\hat{\tau}^k_j \wedge t}
)^3 \big| \mathcal{F}_{\hat{\tau}^k_j \wedge t}
\Big]
 -
\varepsilon_{n(k)}^{-1}\crochet{Z^{n(k)},Y}_{t \wedge T_k} \to 0,
\end{equation*}
in probability as $k\to \infty$ for all $t \geq 0$.
\end{lem}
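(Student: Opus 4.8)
The plan is to expand both terms over the rebalancing intervals $[\sigma_j,\tau_j]$, with $\sigma_j:=\hat\tau^k_j\wedge t$ and $\tau_j:=\hat\tau^k_{j+1}\wedge t$, and then to recognise the combination as $\varepsilon^{-1}$ (writing $\varepsilon:=\varepsilon_{n(k)}$) times a sum of martingale increments plus a negligible remainder. Put $B_j:=\int_{\sigma_j}^{\tau_j}(X_s-X_{\sigma_j})(\sigma^Y_s)^2\,ds$. Since on each interval the discretized process is frozen, $X^{n(k)}_{s-}-X_s=X_{\sigma_j}-X_s$, and $d\crochet Y_s=(\sigma^Y_s)^2\,ds$, one has $\crochet{Z^{n(k)},Y}_{t\wedge T_k}=-\sum_{j=0}^{N^{n(k)}_{t\wedge T_k}}B_j$. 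For the cubic term, recall that after the localisation at $\hat\alpha^K$ the process $X$ is, under $\hat\bbQ$, a square-integrable martingale with $d\crochet X_s=(\sigma^X_s)^2\,ds$ and with $b^X$, $\sigma^X$, $1/\sigma^X$ all bounded; applying It\^o's formula to $s\mapsto(X_s-X_{\sigma_j})^3$ on $[\sigma_j,\tau_j]$ and taking $\bbE^{\hat\bbQ}[\,\cdot\mid\mathcal{F}_{\sigma_j}]$ — the $dX$-integral being a genuine martingale on the random interval by the bounded-bracket bound, hence dropping out — gives
\[
\bbE^{\hat\bbQ}[(X_{\tau_j}-X_{\sigma_j})^3\mid\mathcal{F}_{\sigma_j}]=3\,\bbE^{\hat\bbQ}\Big[\int_{\sigma_j}^{\tau_j}(X_s-X_{\sigma_j})(\sigma^X_s)^2\,ds\mid\mathcal{F}_{\sigma_j}\Big].
\]
Multiplying by $\tfrac13\kappa_{\sigma_j}$ and using $\kappa_{\sigma_j}(\sigma^X_s)^2=(\sigma^Y_s)^2+(\kappa_{\sigma_j}-\kappa_s)(\sigma^X_s)^2$ (valid because $\kappa_s(\sigma^X_s)^2=(\sigma^Y_s)^2$), the $j$-th cubic term becomes $\bbE^{\hat\bbQ}[B_j\mid\mathcal{F}_{\sigma_j}]+R_j$, with $R_j:=\bbE^{\hat\bbQ}\big[\int_{\sigma_j}^{\tau_j}(X_s-X_{\sigma_j})(\kappa_{\sigma_j}-\kappa_s)(\sigma^X_s)^2\,ds\mid\mathcal{F}_{\sigma_j}\big]$ the error from freezing $\kappa$.

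Collecting the two contributions (the realised bracket $\sum_j B_j$ cancelling against the compensating part of the conditional cubes), the quantity in the statement equals $\varepsilon^{-1}\sum_j D_j+\varepsilon^{-1}\sum_j R_j$ with $D_j:=\bbE^{\hat\bbQ}[B_j\mid\mathcal{F}_{\sigma_j}]-B_j$. First I would treat the martingale part: each $D_j$ is $\mathcal{F}_{\sigma_{j+1}}$-measurable with $\bbE^{\hat\bbQ}[D_j\mid\mathcal{F}_{\sigma_j}]=0$, and $N^{n(k)}_{t\wedge T_k}$ is a stopping time for $(\mathcal{F}_{\sigma_{j+1}})_j$, so $\sum_jD_j$ is a martingale and $\bbE^{\hat\bbQ}\big[(\sum_jD_j)^2\big]\le\sum_j\bbE^{\hat\bbQ}[B_j^2]$. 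Using $|B_j|\le(\tau_j-\sigma_j)\|\sigma^Y\|_\infty^2\sup_{[\sigma_j,\tau_j]}|X-X_{\sigma_j}|$, the pathwise mesh bound $\tau_j-\sigma_j\le\varepsilon^{4/3}/k$ on $[0,T_k]$, the conditional Doob inequality $\bbE^{\hat\bbQ}[\sup_{[\sigma_j,\tau_j]}|X-X_{\sigma_j}|^2\mid\mathcal{F}_{\sigma_j}]\le 4\|\sigma^X\|_\infty^2\bbE^{\hat\bbQ}[\tau_j-\sigma_j\mid\mathcal{F}_{\sigma_j}]$, and $\bbE^{\hat\bbQ}\big[\sum_j\bbE^{\hat\bbQ}[\tau_j-\sigma_j\mid\mathcal{F}_{\sigma_j}]\big]=\bbE^{\hat\bbQ}[t\wedge T_k]\le T_0$, one gets $\varepsilon^{-2}\bbE^{\hat\bbQ}\big[(\sum_jD_j)^2\big]\le C\,\varepsilon^{2/3}k^{-2}\to0$, hence $\varepsilon^{-1}\sum_jD_j\to0$ in $\hat\bbQ$-probability. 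Then I would treat the remainder: writing $\sum_jR_j$ as the compensator of $\int_0^{t\wedge T_k}(X_s-X^{n(k)}_{s-})(\kappa^n_s-\kappa_s)(\sigma^X_s)^2\,ds$ (with $\kappa^n_s=\kappa_{\sigma_j}$ on $[\sigma_j,\tau_j)$), I would use $|\kappa^n_s-\kappa_s|\le w_\kappa(\varepsilon^{4/3}/k)$ on $[0,T_k]$ — $w_\kappa$ the modulus of continuity of $\kappa$ on the compact localised interval, bounded and tending to $0$ — together with $\int_0^{t\wedge T_k}|X_s-X^{n(k)}_{s-}|\,ds\le T_0^{1/2}\big(\int_0^{t\wedge T_k}(X^{n(k)}_{s-}-X_s)^2\,ds\big)^{1/2}=O_{\hat\bbQ}(\varepsilon)$, the last estimate following from $\int_0^{t\wedge T_k}(X^{n(k)}_{s-}-X_s)^2\,ds\le C\crochet{Z^{n(k)}}_{t\wedge T_k}$ (bounding $\sigma^Y$ below on the localised interval) and the tightness of $\varepsilon^{-2}\crochet{Z^{n(k)}}_{t\wedge T_k}$ from \eqref{loc conv}. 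Splitting $\sum_jR_j$ into this realised integral (which is $O_{\hat\bbQ}(w_\kappa(\varepsilon^{4/3}/k)\,\varepsilon)$) plus a martingale-difference sum handled by the same $L^2$ estimate as above gives $\varepsilon^{-1}\sum_jR_j\to0$ in $\hat\bbQ$-probability. Since $\hat\bbQ\sim\bbP$, this is the asserted convergence in $\bbP$-probability.

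The hard part will be the bookkeeping that keeps every error term with a surplus power of $\varepsilon$ over the renormalisation: this is exactly where the exponent $4/3$ built into $T_k$ is used — each interval-sum gains a factor $\varepsilon^{4/3}/k$ (or its square root) on top of the ``free'' bound $\sum_j(\tau_j-\sigma_j)\le T_0$, which is precisely what is needed to beat $\varepsilon^{-1}$, respectively $\varepsilon^{-2}$. One must also be careful to pass between a pathwise sum $\sum_j(\cdot)$ and its predictable projection $\sum_j\bbE^{\hat\bbQ}[(\cdot)\mid\mathcal{F}_{\sigma_j}]$ through the martingale-difference trick (whose $L^2$ cost is controlled by the same estimate), to justify that the stochastic-integral term of It\^o's formula is a true martingale on each random interval $[\sigma_j,\tau_j]$ (here the boundedness supplied by $\hat\alpha^K$ is used), and to arrange the signs correctly so that $\crochet{Z^{n(k)},Y}$ is cancelled by the compensator of the conditional third moments rather than being added to it.
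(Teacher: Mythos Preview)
Your strategy is the paper's: convert the cube to a bracket integral via It\^o, then split into a martingale-difference sum and a $\kappa$-freezing remainder. The notable difference is in the variance bound for the martingale part. The paper invokes Lenglart's inequality and then a rather heavy chain (H\"older, It\^o, Burkholder--Davis--Gundy) to reduce $\sum_j\kappa_{\sigma_j}^2\,\bbE^{\hat\bbQ}\big[(\int(X_u-X_{\sigma_j})\,d\crochet{X}_u)^2\mid\mathcal{F}_{\sigma_j}\big]$ to $C\sum_j\bbE^{\hat\bbQ}[(\Delta\tau_j)^3]$ before using the mesh bound. Your route is more direct: the pathwise mesh bound on $[0,T_k]$ gives $|B_j|\le C\varepsilon^{4/3}k^{-1}\sup_{[\sigma_j,\tau_j]}|X-X_{\sigma_j}|$, and Doob closes the estimate. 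Both land on the same rate $\varepsilon^{2/3}/k^2$, so nothing is lost; your argument is shorter here. For the $\kappa$-freezing remainder the paper uses a single Cauchy--Schwarz bound against $\varepsilon^{-1}\crochet{Z^{n(k)}}^{1/2}\crochet{X}^{1/2}$ and the uniform continuity of $\kappa$, which is a bit cleaner than your ``realised integral plus martingale difference'' split but equivalent in spirit. One small caveat: your use of a \emph{lower} bound on $\sigma^Y$ is not provided by the localisation $\hat\alpha^K$ (which only bounds $\sigma^Y$ above and $\sigma^X$ below); it is still fine because $\sigma^Y$ is a.s.\ bounded away from zero on $[0,T_0]$, and you only need convergence in probability.

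There is, however, a genuine sign slip in your decomposition. With $B_j=\int_{\sigma_j}^{\tau_j}(X_s-X_{\sigma_j})(\sigma^Y_s)^2\,ds$ you correctly get $\crochet{Z^{n(k)},Y}_{t\wedge T_k}=-\sum_jB_j$, and the It\^o step gives $\tfrac13\kappa_{\sigma_j}\bbE^{\hat\bbQ}[\Delta_j^3\mid\mathcal{F}_{\sigma_j}]=\bbE^{\hat\bbQ}[B_j\mid\mathcal{F}_{\sigma_j}]+R_j$. Hence the quantity in the lemma equals
\[
\varepsilon^{-1}\sum_j\bbE^{\hat\bbQ}[B_j\mid\mathcal{F}_{\sigma_j}]+\varepsilon^{-1}\sum_jR_j \;-\;\varepsilon^{-1}\crochet{Z^{n(k)},Y}
\;=\;\varepsilon^{-1}\sum_j\big(\bbE^{\hat\bbQ}[B_j\mid\mathcal{F}_{\sigma_j}]+B_j\big)+\varepsilon^{-1}\sum_jR_j,
\]
not $\varepsilon^{-1}\sum_jD_j+\varepsilon^{-1}\sum_jR_j$. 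Your estimates actually prove
\[
\tfrac13\,\varepsilon^{-1}\sum_j\kappa_{\sigma_j}\bbE^{\hat\bbQ}[\Delta_j^3\mid\mathcal{F}_{\sigma_j}]\;+\;\varepsilon^{-1}\crochet{Z^{n(k)},Y}_{t\wedge T_k}\to 0,
\]
which is the version with the opposite sign. This is a typo in the paper's statement (the same computation with the correct sign appears later in the proof of Proposition~\ref{prop: intuition a s}, see \eqref{disc kappa3}); it is harmless downstream because the lemma is only used after squaring, but you should flag it rather than force the wrong cancellation.
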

\begin{proof}
By It$\hat{\text{o}}$'s formula,
\begin{equation*}
\frac{1}{3}
\bbE^{\hat{\bbQ}}\big[
(X_{\hat{\tau}^k_{j+1} \wedge t} - 
X_{\hat{\tau}^k_j \wedge t}
)^3 \big| \mathcal{F}_{\hat{\tau}^k_j \wedge t}
\big]
=
\mathbb{E}^{\hat{\bbQ}}\big[
\int_{\hat{\tau}^k_j \wedge t}^{\hat{\tau}^k_{j+1} \wedge t} (X_u - X_{\hat{\tau}^k_j})d \crochet{X}_u
 \big| \mathcal{F}_{\hat{\tau}^k_j \wedge t}
\big].
\end{equation*}
We now show that
\begin{equation}\label{lln1}
\begin{split}
& \varepsilon_{n(k)}^{-1}\sum_{j=0}^{N^{n(k)}_{t\wedge T_k}}
\kappa_{\hat{\tau}^k_j \wedge t}
\mathbb{E}^{\hat{\bbQ}}\big[
\int_{\hat{\tau}^k_j \wedge t}^{\hat{\tau}^k_{j+1} \wedge t} (X_u - X_{\hat{\tau}^k_j})d \crochet{X}_u
 \big| \mathcal{F}_{\hat{\tau}^k_j \wedge t}
\big]
\\ & -
 \varepsilon_{n(k)}^{-1}\sum_{j=0}^{N^{n(k)}_{t\wedge T_k}}
\kappa_{\hat{\tau}^k_j \wedge t}
\int_{\hat{\tau}^k_j \wedge t}^{\hat{\tau}^k_{j+1} \wedge t} (X_u - X_{\hat{\tau}^k_j})d \crochet{X}_u
\to 0
\end{split}
\end{equation}
and
\begin{equation}\label{disc kappa}
 \varepsilon_{n(k)}^{-1}\sum_{j=0}^{N^{n(k)}_{t\wedge T_k}}
\kappa_{\hat{\tau}^k_j \wedge t}
\int_{\hat{\tau}^k_j \wedge t}^{\hat{\tau}^k_{j+1} \wedge t} (X_u - X_{\hat{\tau}^k_j})d \crochet{X}_u
-  \varepsilon_{n(k)}^{-1}\crochet{Z^{n(k)},Y}_{t \wedge T_k} \to 0, 
\end{equation}
in probability. \\

\noindent
By Lenglart inequality for discrete martingales (see e.g., Lemma~A.2 of \cite{fukasawa2011discretization}), a sufficient condition for (\ref{lln1}) is the fact that
\begin{equation}\label{lln1qv}
 \varepsilon_{n(k)}^{-2}\sum_{j=0}^{N^{n(k)}_{t\wedge T_k}}
\kappa_{\hat{\tau}^k_j \wedge t}^2
\mathbb{E}^{\hat{\bbQ}}\big[ \big(
\int_{\hat{\tau}^k_j \wedge t}^{\hat{\tau}^k_{j+1} \wedge t} (X_u - X_{\hat{\tau}^k_j})d \crochet{X}_u
\big)^2 \big| \mathcal{F}_{\hat{\tau}^k_j \wedge t}
\big] \to 0,
\end{equation}
in probability. To get this convergence, first use successively H\"older inequality, It\^o's formula and Burkholder-Davis-Gundy inequality to obtain that
\begin{equation*}
\begin{split}
&\sum_{j=0}^{N^{n(k)}_{t \wedge T_k}}
\kappa_{\hat{\tau}^k_j \wedge t}^2
\bbE^{\hat{\bbQ}}\big[
\big(
\int_{\hat{\tau}^k_j \wedge t}^{\hat{\tau}^k_{j+1} \wedge t}
(X_u-X_{\hat{\tau}^k_j})d\langle X
 \rangle_u
\big)^2 \big| \mathcal{F}_{\hat{\tau}^k_j \wedge t}
\big] 
\\
& \leq C\sum_{j=0}^{N^{n(k)}_{t \wedge T_k}}
\kappa_{\hat{\tau}^k_j \wedge t}^2
\bbE^{\hat{\bbQ}}\Big[\big
(\langle X \rangle_{\hat{\tau}^k_{j+1} \wedge t} - \langle X
 \rangle_{\hat{\tau}^k_j \wedge t}\big)^{3/2}
\big(
\int_{\hat{\tau}^k_j \wedge t}^{\hat{\tau}^k_{j+1} \wedge t}
(X_u-X_{\hat{\tau}^k_j})^4d\langle X
 \rangle_u
\big)^{1/2} \big| \mathcal{F}_{\hat{\tau}^k_j \wedge t}
\Big] \\
& \leq C
\Big(\sum_{j=0}^{N^{n(k)}_{t \wedge T_k}}
\bbE^{\hat{\bbQ}}\big[
\big(\langle X \rangle_{\hat{\tau}^k_{j+1} \wedge t} - \langle X
 \rangle_{\hat{\tau}^k_j \wedge t}\big)^3
\big| \mathcal{F}_{\hat{\tau}^k_j \wedge t}
\big]\Big)^{1/2}
\Big(\sum_{j=0}^{N^{n(k)}_{t \wedge T_k}}
\bbE^{\hat{\bbQ}}\big[
\int_{\hat{\tau}^k_j \wedge t}^{\hat{\tau}^k_{j+1} \wedge t}
(X_u-X_{\hat{\tau}^k_j})^4d\langle X
 \rangle_u
 \big| \mathcal{F}_{\hat{\tau}^k_j \wedge t}
\big]\Big)^{1/2}
\\
& = C
\Big(\sum_{j=0}^{N^{n(k)}_{t \wedge T_k}}
\bbE^{\hat{\bbQ}}\big[
\big(\langle X \rangle_{\hat{\tau}^k_{j+1} \wedge t} - \langle X
 \rangle_{\hat{\tau}^k_j \wedge t}\big)^3
\big| \mathcal{F}_{\hat{\tau}^k_j \wedge t}
\big]\Big)^{1/2}
\Big(\sum_{j=0}^{N^{n(k)}_{t \wedge T_k}}
\bbE^{\hat{\bbQ}}\big[
(X_{\hat{\tau}^k_{j+1}\wedge t}-X_{\hat{\tau}^k_j \wedge t})^6
 | \mathcal{F}_{\hat{\tau}^k_j \wedge t}
\big]\Big)^{1/2} 
\\
& \leq
C\sum_{j=0}^{N^{n(k)}_{t \wedge T_k}}
\bbE^{\hat{\bbQ}}\big[
\big(\langle X \rangle_{\hat{\tau}^k_{j+1} \wedge t} - \langle X
 \rangle_{\hat{\tau}^k_j \wedge t}\big)^3
\big| \mathcal{F}_{\hat{\tau}^k_j \wedge t}
\big] \\
& \leq
C\sum_{j=0}^{N^{n(k)}_{t \wedge T_k}}
\bbE^{\hat{\bbQ}}\big[
|\hat{\tau}^k_{j+1} \wedge t - 
\hat{\tau}^k_j \wedge t|^3
\big| \mathcal{F}_{\hat{\tau}^k_j \wedge t}
\big]. 
\end{split}
\end{equation*}
Note also that
\begin{equation*}
 \left\{
j \leq N^{n(k)}_{t \wedge T_k} \right\} = \left\{
\tau^{n(k)}_j \leq t \wedge T_k
\right\} \in \mathcal{F}_{\hat{\tau}^k_j \wedge t}.
\end{equation*}
Then (\ref{lln1qv}) follows since
\begin{equation*}
 \varepsilon_{n(k)}^{-2}\bbE^{\hat{\bbQ}}\Big[
\sum_{j=0}^{\infty}
|\hat{\tau}^k_{j+1} \wedge t  - \hat{\tau}^k_j \wedge t|^3
\Big]
\leq
 \varepsilon_{n(k)}^{-2}
\frac{\varepsilon_{n(k)}^{8/3}}{k^2}
\bbE^{\hat{\bbQ}}\Big[
\sum_{j=0}^{\infty}
|\hat{\tau}^k_{j+1} \wedge t  - \hat{\tau}^k_j \wedge t|
\Big]
\leq \frac{\varepsilon_{n(k)}^{2/3}}{k^2} t \to 0.
\end{equation*}

\noindent
We now turn to \eqref{disc kappa}. Note that
\begin{equation*}
  \varepsilon_{n(k)}^{-1}\sum_{j=0}^{N^{n(k)}_{t\wedge T_k}}
\kappa_{\hat{\tau}^k_j \wedge t}
\int_{\hat{\tau}^k_j \wedge t}^{\hat{\tau}^k_{j+1} \wedge t} (X_u - X_{\hat{\tau}^k_j})d \crochet{X}_u
=
  \varepsilon_{n(k)}^{-1}\sum_{j=0}^\infty
\kappa_{\hat{\tau}^k_j \wedge t}
\int_{\hat{\tau}^k_j \wedge t}^{\hat{\tau}^k_{j+1} \wedge t} (X_u - X_{\hat{\tau}^k_j})d \crochet{X}_u
\end{equation*}
and
\begin{equation*}
 \varepsilon_{n(k)}^{-1}\crochet{Z^{n(k)},Y}_{t \wedge T_k} = 
   \varepsilon_{n(k)}^{-1}\sum_{j=0}^\infty
\int_{\hat{\tau}^k_j \wedge t}^{\hat{\tau}^k_{j+1} \wedge t} (X_u - X_{\hat{\tau}^k_j})\kappa_u d \crochet{X}_u.
\end{equation*}
Therefore, the absolute value of the left hand side of \eqref{disc
  kappa} is dominated by
\begin{align*}
&\varepsilon_{n(k)}^{-1}\sum_{j=0}^\infty
\int_{\hat{\tau}^k_j \wedge t}^{\hat{\tau}^k_{j+1} \wedge t} |X_u -
X_{\hat{\tau}^k_j}|\,|\kappa_u-\kappa_{\hat{\tau}^k_j \wedge t}| d
\crochet{X}_u\\
&\leq \Big(\varepsilon_{n(k)}^{-2}\sum_{j=0}^\infty
\int_{\hat{\tau}^k_j \wedge t}^{\hat{\tau}^k_{j+1} \wedge t} |X_u -
X_{\hat{\tau}^k_j}|^2\,|\kappa_u-\kappa_{\hat{\tau}^k_j \wedge t}|^2 d
\crochet{X}_u\Big)^{1/2}\Big(\sum_{j=0}^\infty
\int_{\hat{\tau}^k_j \wedge t}^{\hat{\tau}^k_{j+1} \wedge t} d
\crochet{X}_u\Big)^{1/2}\\
&\leq \sup_{u \geq 0}\frac{1}{\kappa_{u \wedge T_k}}
 \sup_{u \geq 0, j \geq 0}\big|
\kappa_{\hat{\tau}^k_{j+1} \wedge u} - \kappa_{ \hat{\tau}^k_j \wedge u}
\big|
 \varepsilon_{n(k)}^{-1}\crochet{Z^{n(k)}}^{1/2}_{t \wedge T_k}\crochet{X}^{1/2}_{t \wedge T_k},
\end{align*}
which converges to $0$ due to \eqref{loc conv} and the uniform continuity of $\kappa$.
\end{proof}

\begin{lem}\label{lem2}
We have
\begin{equation*}
\frac{1}{6} \varepsilon_{n(k)}^{-2}\sum_{j=0}^{N^{n(k)}_{t \wedge T_k}}
\kappa_{\hat{\tau}^k_j \wedge t}
\bbE^{\hat{\bbQ}}\big[
\big(X_{\hat{\tau}^k_{j+1} \wedge t} - 
X_{\hat{\tau}^k_j \wedge t}
\big)^4 \big| \mathcal{F}_{\hat{\tau}^k_j \wedge t}
\big]
 -
\varepsilon_{n(k)}^{-2}\crochet{Z^{n(k)}}_{t \wedge T_k} \to 0,
\end{equation*}
in probability as $k\to \infty$, for all $t \geq 0$.
\end{lem}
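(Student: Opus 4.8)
The plan is to reproduce, almost step for step, the proof of Lemma~\ref{lem1}, with the third conditional moment of the increments of $X$ replaced by the fourth one and $\crochet{Z^{n(k)},Y}$ replaced by $\crochet{Z^{n(k)}}$. One works throughout under $\hat{\bbQ}$, under which $X_{\cdot\wedge\hat{\alpha}^K}$ is a martingale with quadratic variation bounded by $K^2T_0$; hence, by Burkholder--Davis--Gundy (BDG), all moments of its increments are controlled by powers of $\crochet{X}$, and every stochastic integral occurring below is a true $\hat{\bbQ}$-martingale. Set $A^k_j:=\int_{\hat{\tau}^k_j\wedge t}^{\hat{\tau}^k_{j+1}\wedge t}(X_u-X_{\hat{\tau}^k_j})^2\,d\crochet{X}_u$. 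Since $X$ is a $\hat{\bbQ}$-martingale on the relevant interval, the finite-variation part of $(X_\cdot-X_{\hat{\tau}^k_j})^4$ equals $6\int(X_u-X_{\hat{\tau}^k_j})^2d\crochet{X}_u$, so It\^o's formula gives
\begin{equation*}
\tfrac{1}{6}\,\bbE^{\hat{\bbQ}}\big[(X_{\hat{\tau}^k_{j+1}\wedge t}-X_{\hat{\tau}^k_j\wedge t})^4 \,\big|\, \calF_{\hat{\tau}^k_j\wedge t}\big]=\bbE^{\hat{\bbQ}}\big[A^k_j \,\big|\, \calF_{\hat{\tau}^k_j\wedge t}\big],
\end{equation*}
which accounts for the constant $\tfrac16$ in the statement (matching the $\tfrac16$ of \eqref{asmp: limit 2}). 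It therefore remains to establish, in $\hat{\bbQ}$-probability, the analogues of \eqref{lln1} and \eqref{disc kappa}: $\varepsilon_{n(k)}^{-2}\sum_j\kappa_{\hat{\tau}^k_j\wedge t}\big(\bbE^{\hat{\bbQ}}[A^k_j\mid\calF_{\hat{\tau}^k_j\wedge t}]-A^k_j\big)\to 0$ and $\varepsilon_{n(k)}^{-2}\sum_j\kappa_{\hat{\tau}^k_j\wedge t}A^k_j-\varepsilon_{n(k)}^{-2}\crochet{Z^{n(k)}}_{t\wedge T_k}\to 0$.

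For the first convergence I would apply the Lenglart inequality for discrete martingales, as in \eqref{lln1}--\eqref{lln1qv}, to the martingale differences $M^k_j:=\kappa_{\hat{\tau}^k_j\wedge t}\big(A^k_j-\bbE^{\hat{\bbQ}}[A^k_j\mid\calF_{\hat{\tau}^k_j\wedge t}]\big)$; the sufficient condition is now $\varepsilon_{n(k)}^{-4}\sum_j\kappa_{\hat{\tau}^k_j\wedge t}^2\,\bbE^{\hat{\bbQ}}[(A^k_j)^2\mid\calF_{\hat{\tau}^k_j\wedge t}]\to 0$ in probability, which I would obtain exactly as \eqref{lln1qv} but with the It\^o/BDG chain pushed one moment further. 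Indeed, Cauchy--Schwarz inside the integral gives $(A^k_j)^2\le\big(\crochet{X}_{\hat{\tau}^k_{j+1}\wedge t}-\crochet{X}_{\hat{\tau}^k_j\wedge t}\big)\int_{\hat{\tau}^k_j\wedge t}^{\hat{\tau}^k_{j+1}\wedge t}(X_u-X_{\hat{\tau}^k_j})^4\,d\crochet{X}_u$; on $[0,T_k]\cap[0,\hat{\alpha}^K]$ one has the deterministic mesh bound $\crochet{X}_{\hat{\tau}^k_{j+1}\wedge t}-\crochet{X}_{\hat{\tau}^k_j\wedge t}\le K^2(\hat{\tau}^k_{j+1}\wedge t-\hat{\tau}^k_j\wedge t)\le K^2\varepsilon_{n(k)}^{4/3}/k$; and It\^o's formula followed by BDG yields $\bbE^{\hat{\bbQ}}[\int(X_u-X_{\hat{\tau}^k_j})^4\,d\crochet{X}_u\mid\calF_{\hat{\tau}^k_j\wedge t}]=\tfrac{1}{15}\bbE^{\hat{\bbQ}}[(X_{\hat{\tau}^k_{j+1}\wedge t}-X_{\hat{\tau}^k_j\wedge t})^6\mid\calF_{\hat{\tau}^k_j\wedge t}]\le C\,\bbE^{\hat{\bbQ}}[(\crochet{X}_{\hat{\tau}^k_{j+1}\wedge t}-\crochet{X}_{\hat{\tau}^k_j\wedge t})^3\mid\calF_{\hat{\tau}^k_j\wedge t}]$. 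Using the mesh bound twice more to bring the powers of the $\crochet{X}$-increments down to first powers, and $\kappa\le K^4$ on $[0,\hat{\alpha}^K]$, the weighted sum is, after taking $\hat{\bbQ}$-expectation, dominated by $C\,\varepsilon_{n(k)}^{-4}\big(\varepsilon_{n(k)}^{4/3}/k\big)^3\,\bbE^{\hat{\bbQ}}[\crochet{X}_{t\wedge T_k}]=C\,k^{-3}\,\bbE^{\hat{\bbQ}}[\crochet{X}_{t\wedge T_k}]\le C\,T_0\,k^{-3}\to 0$; in particular the convergence holds in $L^1(\hat{\bbQ})$.

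For the second convergence, note that $\crochet{Z^{n(k)}}_{t\wedge T_k}=\sum_j\int_{\hat{\tau}^k_j\wedge t}^{\hat{\tau}^k_{j+1}\wedge t}(X_u-X_{\hat{\tau}^k_j})^2\kappa_u\,d\crochet{X}_u$, so the difference equals $\varepsilon_{n(k)}^{-2}\sum_j\int_{\hat{\tau}^k_j\wedge t}^{\hat{\tau}^k_{j+1}\wedge t}(X_u-X_{\hat{\tau}^k_j})^2\big(\kappa_{\hat{\tau}^k_j\wedge t}-\kappa_u\big)\,d\crochet{X}_u$, whose absolute value is bounded by $\sup_{u,j}\big|\kappa_{\hat{\tau}^k_{j+1}\wedge u}-\kappa_{\hat{\tau}^k_j\wedge u}\big|\cdot\sup_u\kappa_{u\wedge T_k}^{-1}\cdot\varepsilon_{n(k)}^{-2}\crochet{Z^{n(k)}}_{t\wedge T_k}$. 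Here the first factor tends to $0$ by uniform continuity of $\kappa$ and the vanishing mesh on $[0,T_k]$, the second is an a.s.\ finite random variable (since $\kappa$ is continuous and positive on $[0,T_0]$), and $\varepsilon_{n(k)}^{-2}\crochet{Z^{n(k)}}_{t\wedge T_k}$ converges to a finite limit by \eqref{loc conv}; hence the difference tends to $0$. Combining the two convergences with the It\^o identity displayed above yields the claim.

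The only genuine obstacle is the predictable quadratic-variation estimate in the second paragraph: compared with Lemma~\ref{lem1} one must produce the factor $\varepsilon_{n(k)}^{-4}$ rather than $\varepsilon_{n(k)}^{-2}$, which forces one further application of It\^o's formula and BDG (up to the sixth moment of the increments) and a threefold, rather than twofold, use of the deterministic mesh bound supplied by $T_k$. Keeping track of the powers of $\varepsilon_{n(k)}$ and of $k^{-1}$ is where a little care is needed, but each individual inequality is the routine analogue of one already used in the proof of Lemma~\ref{lem1}.
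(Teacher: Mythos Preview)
Your proof is correct and follows essentially the same architecture as the paper's: the It\^o identity yielding the factor $\tfrac16$, the reduction via Lenglart's inequality to the analogue of \eqref{lln2qv}, and the treatment of the $\kappa$-discretization error \eqref{disc kappa2} are all identical in spirit. The one noticeable difference is in how you bound $\sum_j\kappa_j^2\,\bbE^{\hat{\bbQ}}[(A^k_j)^2\mid\calF_{\hat\tau^k_j\wedge t}]$: the paper uses H\"older with exponents $(3,3/2)$ inside the integral and then $(1/3,2/3)$ across the sum, passes through the eighth moment of the increments and $(\crochet{X}_{j+1}-\crochet{X}_j)^4$, and finally converts to $|\hat\tau^k_{j+1}-\hat\tau^k_j|^4$ before invoking the mesh bound three times; you instead apply Cauchy--Schwarz once inside the integral, stop at the sixth moment and $(\crochet{X}_{j+1}-\crochet{X}_j)^3$, and use the deterministic bound $\crochet{X}_{j+1}-\crochet{X}_j\le K^2\varepsilon_{n(k)}^{4/3}/k$ directly at each step. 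Both chains land on the same rate $k^{-3}$, and your version is marginally shorter. Your handling of \eqref{disc kappa2} is also a slight simplification of the paper's (no Cauchy--Schwarz split is needed since the integrand is already $(X_u-X_{\hat\tau^k_j})^2$), but the paper simply says this step ``is obtained in the same way as'' \eqref{disc kappa}, so the difference is cosmetic.
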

\begin{proof}
The proof is very similar to the previous one.
By It$\hat{\text{o}}$'s formula,
\begin{equation*}
\frac{1}{6}
\bbE^{\hat{\bbQ}}\big[
\big(X_{\hat{\tau}^k_{j+1} \wedge t} - 
X_{\hat{\tau}^k_j \wedge t}
\big)^4 \big| \mathcal{F}_{\hat{\tau}^k_j \wedge t}
\big]
=
\mathbb{E}^{\hat{\bbQ}}\big[
\int_{\hat{\tau}^k_j \wedge t}^{\hat{\tau}^k_{j+1} \wedge t} (X_u - X_{\hat{\tau}^k_j})^2 d \crochet{X}_u
 \big| \mathcal{F}_{\hat{\tau}^k_j \wedge t}
\big].
\end{equation*}
We now show that
\begin{equation}\label{lln2}
\begin{split}
& \varepsilon_{n(k)}^{-2}\sum_{j=0}^{N^{n(k)}_{t\wedge T_k}}
\kappa_{\hat{\tau}^k_j \wedge t}
\mathbb{E}^{\hat{\bbQ}}\big[
\int_{\hat{\tau}^k_j \wedge t}^{\hat{\tau}^k_{j+1} \wedge t} (X_u - X_{\hat{\tau}^k_j})^2d \crochet{X}_u
 \big| \mathcal{F}_{\hat{\tau}^k_j \wedge t}
\big]
\\ & -
 \varepsilon_{n(k)}^{-2}\sum_{j=0}^{N^{n(k)}_{t\wedge T_k}}
\kappa_{\hat{\tau}^k_j \wedge t}
\int_{\hat{\tau}^k_j \wedge t}^{\hat{\tau}^k_{j+1} \wedge t} (X_u - X_{\hat{\tau}^k_j})^2d \crochet{X}_u
\to 0
\end{split}
\end{equation}
and
\begin{equation}\label{disc kappa2}
 \varepsilon_{n(k)}^{-2}\sum_{j=0}^{N^{n(k)}_{t\wedge T_k}}
\kappa_{\hat{\tau}^k_j \wedge t}
\int_{\hat{\tau}^k_j \wedge t}^{\hat{\tau}^k_{j+1} \wedge t} (X_u - X_{\hat{\tau}^k_j})^2d \crochet{X}_u
-  \varepsilon_{n(k)}^{-2}\crochet{Z^{n(k)}}_{t \wedge T_k} \to 0, 
\end{equation}
in probability. \\

\noindent
By Lenglart inequality for discrete martingales, a sufficient condition for (\ref{lln2}) is
\begin{equation}\label{lln2qv}
 \varepsilon_{n(k)}^{-4}\sum_{j=0}^{N^{n(k)}_{t\wedge T_k}}
\kappa_{\hat{\tau}^k_j \wedge t}^2
\mathbb{E}^{\hat{\bbQ}}\big[ \Big(
\int_{\hat{\tau}^k_j \wedge t}^{\hat{\tau}^k_{j+1} \wedge t} (X_u - X_{\hat{\tau}^k_j})^2d \crochet{X}_u
\Big)^2 \big| \mathcal{F}_{\hat{\tau}^k_j \wedge t}
\big] \to 0,
\end{equation}
in probability. To get this convergence, first use successively H\"older inequality, It\^o's formula and Burkholder-Davis-Gundy inequality to obtain that
\begin{equation*}
\begin{split}
&\sum_{j=0}^{N^{n(k)}_{t \wedge T_k}}
\kappa_{\hat{\tau}^k_j \wedge t}^2
\bbE^{\hat{\bbQ}}\Big[
\big(
\int_{\hat{\tau}^k_j \wedge t}^{\hat{\tau}^k_{j+1} \wedge t}
(X_u-X_{\hat{\tau}^k_j})^2d\langle X
 \rangle_u
\big)^2 \big| \mathcal{F}_{\hat{\tau}^k_j \wedge t}
\Big] 
\\
& \leq
\sum_{j=0}^{N^{n(k)}_{t \wedge T_k}}
\kappa_{\hat{\tau}^k_j \wedge t}^2
\bbE^{\hat{\bbQ}}\Big[
\big(\langle X \rangle_{\hat{\tau}^k_{j+1} \wedge t} - \langle X
 \rangle_{\hat{\tau}^k_j \wedge t}\big)^{4/3}
\big(
\int_{\hat{\tau}^k_j \wedge t}^{\hat{\tau}^k_{j+1} \wedge t}
(X_u-X_{\hat{\tau}^k_j})^6d\langle X
 \rangle_u
\big)^{2/3} \big| \mathcal{F}_{\hat{\tau}^k_j \wedge t}
\Big] \\
& \leq C
\Big(\sum_{j=0}^{N^{n(k)}_{t \wedge T_k}}
\bbE^{\hat{\bbQ}}\Big[
\big(\langle X \rangle_{\hat{\tau}^k_{j+1} \wedge t} - \langle X
 \rangle_{\hat{\tau}^k_j \wedge t}\big)^4
\big| \mathcal{F}_{\hat{\tau}^k_j \wedge t}
\Big]\Big)^{1/3}
\Big(\sum_{j=0}^{N^{n(k)}_{t \wedge T_k}}
\bbE^{\hat{\bbQ}}\Big[
\int_{\hat{\tau}^k_j \wedge t}^{\hat{\tau}^k_{j+1} \wedge t}
(X_u-X_{\hat{\tau}^k_j})^6d\langle X
 \rangle_u
 \big| \mathcal{F}_{\hat{\tau}^k_j \wedge t}
\Big]\Big)^{2/3}
\\
& = C
\Big(\sum_{j=0}^{N^{n(k)}_{t \wedge T_k}}
\bbE^{\hat{\bbQ}}\Big[
\big(\langle X \rangle_{\hat{\tau}^k_{j+1} \wedge t} - \langle X
 \rangle_{\hat{\tau}^k_j \wedge t}\big)^4
\big| \mathcal{F}_{\hat{\tau}^k_j \wedge t}
\Big]\Big)^{1/3}
\Big(\sum_{j=0}^{N^{n(k)}_{t \wedge T_k}}
\bbE^{\hat{\bbQ}}\Big[
(X_{\hat{\tau}^k_{j+1}\wedge t}-X_{\hat{\tau}^k_j \wedge t})^8
 | \mathcal{F}_{\hat{\tau}^k_j \wedge t}
\Big]\Big)^{2/3}
\\
& \leq
C \sum_{j=0}^{N^{n(k)}_{t \wedge T_k}}
\bbE^{\hat{\bbQ}}\big[
\big(\langle X \rangle_{\hat{\tau}^k_{j+1} \wedge t} - \langle X
 \rangle_{\hat{\tau}^k_j \wedge t}\big)^4
\big| \mathcal{F}_{\hat{\tau}^k_j \wedge t}
\big] \\
& \leq 
C \sum_{j=0}^{N^{n(k)}_{t \wedge T_k}}
\bbE^{\hat{\bbQ}}\big[
|\hat{\tau}^k_{j+1} \wedge t - \hat{\tau}^k_j \wedge t|^4
\big| \mathcal{F}_{\hat{\tau}^k_j \wedge t}
\big].
\end{split}
\end{equation*}
Then, observe that
\begin{equation*}
\varepsilon_{n(k)}^{-4}\bbE^{\hat{\bbQ}} \Big[
\sum_{j=0}^\infty
|\hat{\tau}^k_{j+1} \wedge t - 
\hat{\tau}^k_j \wedge t|^4\Big]
\leq \frac{t}{k^3} \to 0,
\end{equation*}
which gives (\ref{lln2}). The proof for \eqref{disc kappa2} is obtained in the same way as that for \eqref{disc kappa}.
\end{proof}
\noindent We finally give the following almost straightforward result, which is easily deduced from simplified versions of the proofs of the previous lemma. 
\begin{lem}\label{lem_additional}
We have
\begin{equation*}
\sum_{j=0}^{N^{n(k)}_{t \wedge T_k}}
\kappa_{\hat{\tau}^k_j \wedge t}
\bbE^{\hat{\bbQ}}\big[
\big(X_{\hat{\tau}^k_{j+1} \wedge t} - 
X_{\hat{\tau}^k_j \wedge t}
\big)^2 \big| \mathcal{F}_{\hat{\tau}^k_j \wedge t}
\big]
 -
\crochet{Y}_{t \wedge T_k} \to 0,
\end{equation*}
in probability as $k\to \infty$ for all $t \geq 0$.
\end{lem}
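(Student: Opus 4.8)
The plan is to follow the proofs of Lemmas~\ref{lem1} and \ref{lem2}, specialized to the second moment, where everything simplifies because $X_{\cdot \wedge \hat{\alpha}^K}$ is a $\hat{\bbQ}$-martingale. First I would apply It\^o's formula to $(X_{\hat{\tau}^k_{j+1} \wedge t} - X_{\hat{\tau}^k_j \wedge t})^2$: the stochastic-integral term in the expansion is a $\hat{\bbQ}$-martingale and hence has vanishing conditional expectation, which gives
\begin{equation*}
\bbE^{\hat{\bbQ}}\big[\big(X_{\hat{\tau}^k_{j+1} \wedge t} - X_{\hat{\tau}^k_j \wedge t}\big)^2 \,\big|\, \mathcal{F}_{\hat{\tau}^k_j \wedge t}\big] = \bbE^{\hat{\bbQ}}\big[\crochet{X}_{\hat{\tau}^k_{j+1} \wedge t} - \crochet{X}_{\hat{\tau}^k_j \wedge t} \,\big|\, \mathcal{F}_{\hat{\tau}^k_j \wedge t}\big].
\end{equation*}
Thus it suffices to prove that $\sum_{j=0}^{N^{n(k)}_{t \wedge T_k}} \kappa_{\hat{\tau}^k_j \wedge t}\,\bbE^{\hat{\bbQ}}[\crochet{X}_{\hat{\tau}^k_{j+1} \wedge t} - \crochet{X}_{\hat{\tau}^k_j \wedge t} \,|\, \mathcal{F}_{\hat{\tau}^k_j \wedge t}]$ converges in probability to $\crochet{Y}_{t \wedge T_k}$.

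Second, I would remove the conditioning via the Lenglart inequality for discrete martingales (Lemma~A.2 of \cite{fukasawa2011discretization}), exactly as for \eqref{lln1} and \eqref{lln2}; the relevant sufficient condition is $\sum_j \kappa_{\hat{\tau}^k_j \wedge t}^2\, \bbE^{\hat{\bbQ}}[(\crochet{X}_{\hat{\tau}^k_{j+1} \wedge t} - \crochet{X}_{\hat{\tau}^k_j \wedge t})^2 \,|\, \mathcal{F}_{\hat{\tau}^k_j \wedge t}] \to 0$ in probability. Because of the definition \eqref{alphahat} of $\hat{\alpha}^K$ and the stopping at $T_k$, on $[0,T_k]$ the process $\kappa$ is bounded and $\crochet{X}$ is absolutely continuous with bounded density; moreover, by definition of $T_k$ the mesh satisfies $\hat{\tau}^k_{j+1} \wedge t - \hat{\tau}^k_j \wedge t \leq \varepsilon_{n(k)}^{4/3}/k$. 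Hence, taking $\hat{\bbQ}$-expectation as in the preceding two lemmas, the sum above is dominated by $C\,\varepsilon_{n(k)}^{4/3}/k \to 0$, so the conditional and unconditional sums have the same limit in probability.

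Third, it remains to identify the limit of $\sum_j \kappa_{\hat{\tau}^k_j \wedge t}(\crochet{X}_{\hat{\tau}^k_{j+1} \wedge t} - \crochet{X}_{\hat{\tau}^k_j \wedge t})$. Since $\kappa_u\, d\crochet{X}_u = (\sigma^Y_u)^2\, du = d\crochet{Y}_u$, we may write $\crochet{Y}_{t \wedge T_k} = \sum_j \int_{\hat{\tau}^k_j \wedge t}^{\hat{\tau}^k_{j+1} \wedge t} \kappa_u\, d\crochet{X}_u$, so the difference of the two quantities is bounded by $\sup_{u \geq 0,\, j \geq 0}|\kappa_{\hat{\tau}^k_{j+1} \wedge u} - \kappa_{\hat{\tau}^k_j \wedge u}|\, \crochet{X}_{t \wedge T_k}$, which tends to $0$ by the uniform continuity of $\kappa$ on $[0,\hat{\alpha}^K]$ and the fact that the mesh goes to $0$ — this is the same estimate already used at the end of the proof of Lemma~\ref{lem1} leading to \eqref{disc kappa}. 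There is no genuine obstacle here: the only points needing care are the routine localization bookkeeping (boundedness of $\kappa$ and of the density of $\crochet{X}$ up to $T_k$, and the deterministic mesh bound coming from the definition of $T_k$) and the correct application of the discrete Lenglart inequality with the filtration $(\mathcal{F}_{\hat{\tau}^k_j \wedge t})_{j \geq 0}$, precisely as in the two preceding lemmas.
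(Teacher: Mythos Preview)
Your proposal is correct and follows exactly the approach the paper indicates: it is the simplified version of the proofs of Lemmas~\ref{lem1} and~\ref{lem2}, with It\^o's formula reducing the second moment to the quadratic-variation increment, Lenglart's inequality removing the conditioning, and the uniform continuity of $\kappa$ on $[0,\hat{\alpha}^K]$ (together with the mesh bound from $T_k$) identifying the limit as $\crochet{Y}_{t\wedge T_k}$. The paper does not spell out any of this and simply asserts that the result ``is easily deduced from simplified versions of the proofs of the previous lemma[s]'', which is precisely what you have written out.
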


\noindent
We are now ready to complete the proof of Proposition~\ref{proposition: stable conv}.
From \eqref{loc conv} and Lemmas~\ref{lem1}, \ref{lem2} and \ref{lem_additional}, we have for all $0\leq v\leq t$ the following convergences in probability as $k\to \infty$:
\begin{equation*}
\begin{split}
& \sum_{j=N^{n(k)}_{v \wedge T_k}+1}^{N^{n(k)}_{t \wedge T_k}}
\kappa_{\hat{\tau}^k_j \wedge t}
\bbE^{\hat{\bbQ}}\big[
\big(X_{\hat{\tau}^k_{j+1} \wedge t} - 
X_{\hat{\tau}^k_j \wedge t}
\big)^2 \big| \mathcal{F}_{\hat{\tau}^k_j \wedge t}
\big] \to 
\int_{v \wedge \hat{\alpha}^K}^{t \wedge \hat{\alpha}^K}  (\sigma^Y_y)^2 du, \\
& \varepsilon_{n(k)}^{-1}\sum_{j=N^{n(k)}_{v \wedge T_k}+1}^{N^{n(k)}_{t \wedge T_k}}
\kappa_{\hat{\tau}^k_j \wedge t}
\bbE^{\hat{\bbQ}}\big[
\big(X_{\hat{\tau}^k_{j+1} \wedge t} - 
X_{\hat{\tau}^k_j \wedge t}
\big)^3 \big| \mathcal{F}_{\hat{\tau}^k_j \wedge t}
\big] \to 
\int_{v \wedge \hat{\alpha}^K}^{t \wedge \hat{\alpha}^K} s_u (\sigma^Y_y)^2 du, \\
& \varepsilon_{n(k)}^{-2}\sum_{j=N^{n(k)}_{v \wedge T_k}+1}^{N^{n(k)}_{t \wedge T_k}}
\kappa_{\hat{\tau}^k_j \wedge t}
\bbE^{\hat{\bbQ}}\big[
\big(X_{\hat{\tau}^k_{j+1} \wedge t} - 
X_{\hat{\tau}^k_j \wedge t}
\big)^4 \big| \mathcal{F}_{\hat{\tau}^k_j \wedge t}
\big] \to 
\int_{v \wedge \hat{\alpha}^K}^{t \wedge \hat{\alpha}^K} a_u^2 (\sigma^Y_y)^2 du. 
 \end{split}
\end{equation*}
Since
\begin{equation*}
\big(
\bbE^{\hat{\bbQ}}\big[
(X_{\hat{\tau}^k_{j+1}\wedge t}-X_{\hat{\tau}^k_j \wedge t})^3
 \big| \mathcal{F}_{\hat{\tau}^k_j \wedge t}
\big]\big)^2
\leq 
\bbE^{\hat{\bbQ}}\big[
(X_{\hat{\tau}^k_{j+1}\wedge t}-X_{\hat{\tau}^k_j \wedge t})^2
 \big| \mathcal{F}_{\hat{\tau}^k_j \wedge t}
\big]
\bbE^{\hat{\bbQ}}\big[
(X_{\hat{\tau}^k_{j+1}\wedge t}-X_{\hat{\tau}^k_j \wedge t})^4
 \big| \mathcal{F}_{\hat{\tau}^k_j \wedge t}
\big],
\end{equation*}
we have
\begin{equation*}
\begin{split}
&\Big(\varepsilon_{n(k)}^{-1}\sum_{j=N^{n(k)}_{v \wedge T_k}+1}^{N^{n(k)}_{t \wedge T_k}}
\kappa_{\hat{\tau}^k_j}
\bbE^{\hat{\bbQ}}\big[
(X_{\hat{\tau}^k_{j+1}\wedge t}-X_{\hat{\tau}^k_j \wedge t})^3
 \big| \mathcal{F}_{\hat{\tau}^k_j \wedge t}
\big]\Big)^2
\\ &\leq
\varepsilon_{n(k)}^{-2}\sum_{j=N^{n(k)}_{v \wedge T_k}+1}^{N^{n(k)}_{t \wedge T_k}}
\kappa_{\hat{\tau}^k_j}
\bbE^{\hat{\bbQ}}\left[
(X_{\hat{\tau}^k_{j+1}\wedge t}-X_{\hat{\tau}^k_j \wedge t})^4
 \big| \mathcal{F}_{\hat{\tau}^k_j \wedge t}
\right]
\sum_{j=N^{n(k)}_{v \wedge T_k}+1}^{N^{n(k)}_{t \wedge T_k}}
\kappa_{\hat{\tau}^k_j}
\bbE^{\hat{\bbQ}}\left[
(X_{\hat{\tau}^k_{j+1}\wedge t}-X_{\hat{\tau}^k_j \wedge t})^2
 \big| \mathcal{F}_{\hat{\tau}^k_j \wedge t}
\right].
\end{split}
\end{equation*}
This implies that for all $0\leq v\leq t$,
\begin{equation*}
\big(
\int_{v \wedge \hat{\alpha}^K}^{t \wedge \hat{\alpha}^K} s_u(\sigma^Y_u)^2du
\big)^2 \leq
\int_{v \wedge \hat{\alpha}^K}^{t \wedge \hat{\alpha}^K} a_u^2(\sigma^Y_u)^2du
\int_{v \wedge \hat{\alpha}^K}^{t \wedge \hat{\alpha}^K} (\sigma^Y_u)^2 du.
\end{equation*}
Thus we obtain \eqref{eq: local a s}.

\subsection{Proof of Proposition \ref{prop: intuition a s}}
In this proof, using a classical localization procedure together with Girsanov theorem, we can assume that $b^X=0$ and that $\sigma^X$ and $\sigma^Y$ are bounded on $[0,T]$. We start with two technical lemmas and their proof.\\

\begin{lem}\label{lemma: conv diff qv}
For any $p \in [0,2)$,
\begin{equation*}
 \varepsilon_n^{-p} \sup_{j \geq 0}(\crochet{X}_{\tau^n_{j+1}}  -\crochet{X}_{\tau^n_j}) \to 0,
\end{equation*}
in probability. 
\end{lem}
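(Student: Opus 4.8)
The plan is to turn the statement into a small--ball (Brownian tube) estimate via the Dambis--Dubins--Schwarz time change, and then close with a union bound over a deterministic grid. Recall that, under the standing reduction of this proof, $b^X\equiv0$, so $X$ is a continuous local martingale with $\langle X\rangle_t=\int_0^t(\sigma^X_u)^2\,du\le C:=\|\sigma^X\|_\infty^2\,T<\infty$, where $C$ is deterministic. I would introduce the associated Brownian motion $\check X_s:=X_{\langle X\rangle^{-1}(s)}-X_0$, extended to a Brownian motion on $[0,\infty)$ on an enlarged probability space (admissible since $\langle X\rangle_\infty=\langle X\rangle_T<\infty$), so that $X_t-X_0=\check X_{\langle X\rangle_t}$ on $[0,T]$. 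Writing $\check\tau^n_j:=\langle X\rangle_{\tau^n_j}$, the quantity to be controlled, $V_j:=\langle X\rangle_{\tau^n_{j+1}}-\langle X\rangle_{\tau^n_j}$, is precisely the \emph{length} $\check\tau^n_{j+1}-\check\tau^n_j$ of an interval of the Brownian clock, while the deviation of $\check X$ across that interval is small:
\[
\sup_{\check\tau^n_j\le s\le\check\tau^n_{j+1}}\big|\check X_s-\check X_{\check\tau^n_j}\big|
=\sup_{\tau^n_j\le t\le\tau^n_{j+1}}\big|X_t-X_{\tau^n_j}\big|
\le\sup_{t\in[0,T]}\big|X^n_t-X_t\big|=:\Xi_n,
\]
the last inequality using the continuity of $X$ and the fact that $X^n_t=X_{\tau^n_j}$ on $[\tau^n_j,\tau^n_{j+1})$. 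So, on the event that $\Xi_n$ is small, every Brownian interval $[\check\tau^n_j,\check\tau^n_{j+1}]$ is one along which $\check X$ remains in a narrow tube.

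\smallskip
\noindent Next I would fix $\delta>0$ and show $\mathbb P[\sup_j V_j\ge\delta\varepsilon_n^p]\to0$. Given $\eta>0$, hypothesis \eqref{sup diff} makes $\varepsilon_n^{-4}\Xi_n^4$ uniformly integrable, hence bounded in $L^1$, so one may pick $M$ (depending only on $\eta$) with $\sup_n\mathbb P[\Xi_n\ge M\varepsilon_n]\le\eta$. On $\{\Xi_n<M\varepsilon_n\}$, if $\sup_j V_j\ge\delta\varepsilon_n^p$ then, choosing $j^\ast$ with $V_{j^\ast}\ge\delta\varepsilon_n^p$ and putting $u=\check\tau^n_{j^\ast}$, $v=\check\tau^n_{j^\ast+1}$, one gets $0\le u<v\le\langle X\rangle_T\le C$ with $v-u\ge\ell:=\delta\varepsilon_n^p$ and $\sup_{u\le s\le v}|\check X_s-\check X_u|<M\varepsilon_n$. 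I would then cover the \emph{deterministic} interval $[0,C]$ by $K_n\le 2C/\ell+2$ cells of length $\ell/2$; any sub-interval of $[0,C]$ of length $\ge\ell$ contains one such cell entirely, and on that cell $\check X$ deviates from its left-endpoint value by less than $2M\varepsilon_n$. A union bound over the $K_n$ cells, combined with stationarity of increments and Brownian scaling, then gives
\[
\mathbb P\big[\textstyle\sup_j V_j\ge\delta\varepsilon_n^p,\ \Xi_n<M\varepsilon_n\big]
\;\le\;K_n\,\mathbb P\Big[\sup_{0\le s\le 1}|\beta_s|<M'\,\varepsilon_n^{(2-p)/2}\Big],
\]
where $\beta$ is a standard Brownian motion and $M'$ depends only on $\delta$ and $M$.

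\smallskip
\noindent To conclude I would invoke the classical small--ball bound $\mathbb P[\sup_{[0,1]}|\beta|\le y]\le C_2\,e^{-c_2/y^2}$, valid for all $y>0$ with universal $C_2\ge1$, $c_2>0$ (which also follows elementarily by partitioning $[0,1]$ into $\asymp y^{-2}$ sub-intervals and using independence of increments). Since $p<2$, the exponent $-c_2/((M')^2\varepsilon_n^{2-p})$ tends to $-\infty$, and this super-polynomially small probability dominates the merely polynomial growth $K_n\asymp\varepsilon_n^{-p}$; hence the right-hand side above tends to $0$, so $\limsup_n\mathbb P[\sup_j V_j\ge\delta\varepsilon_n^p]\le\eta$, and letting $\eta\downarrow0$ finishes the proof.

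\smallskip
\noindent The step I expect to be the crux is obtaining the right \emph{order of magnitude}. A direct second-moment (Burkholder--Davis--Gundy) estimate only yields $\sup_j V_j=O_{\mathbb P}(\varepsilon_n)$, which is useless once $p>1$. The true content of the lemma is that $\sup_j V_j$ is of order $\varepsilon_n^2$ up to a sub-polynomial (essentially logarithmic) factor, and capturing this is exactly what the Brownian tube estimate achieves, provided the companion union bound is organised so that its cardinality stays polynomial in $\varepsilon_n^{-1}$. The remaining ingredients---justifying the time change when $\langle X\rangle_\infty<\infty$ and handling the left limit at the endpoints $\tau^n_{j+1}$---are routine.
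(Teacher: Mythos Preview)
Your argument is correct: the Dambis--Dubins--Schwarz time change turns the statement into a Brownian tube problem, the deterministic grid trick converts the random interval into a deterministic one so that the union bound and scaling apply, and the small--ball estimate then kills the polynomial factor $K_n$. The only things to be careful about are that $\langle X\rangle$ may have flat spots (harmless, since $X$ is constant there) and that the $\check\tau^n_j$ need not be stopping times for the Brownian filtration (also harmless, since your inclusion into the union of deterministic-cell events is purely pathwise).

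That said, the paper's route is considerably shorter and avoids both the time change and the small--ball estimate. It localizes on $\{\varepsilon_n^{-1}\Xi_n\le K\}$, fixes an integer $m>2/(2-p)$, and uses the elementary bound
\[
\sup_{j}V_j\;\le\;\Big(\sum_{j}V_j^m\Big)^{1/m},
\]
after which Burkholder--Davis--Gundy gives $\bbE[V_j^m]\le C\,\bbE\big[\sup_t|X_{\tau^n_{j+1}\wedge t}-X_{\tau^n_j}|^{2m}\big]$; pulling out $2m-2$ factors bounded by $(K\varepsilon_n)^{2m-2}$ and summing the remaining square via Doob and $\bbE[\langle X\rangle_T]<\infty$ yields $\bbE\big[\sum_j V_j^m\big]\le C\varepsilon_n^{2m-2}$, hence $\varepsilon_n^{-p}\sup_j V_j\to 0$ in $L^m$. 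So your remark that ``a direct BDG estimate only yields $\sup_jV_j=O_{\bbP}(\varepsilon_n)$'' is true for second moments but misleading: passing to $m$-th moments with $m$ large is exactly what closes the gap, and does so with less machinery than your approach. Your method has the merit of making the Brownian scaling heuristic $\sup_jV_j\approx\varepsilon_n^2$ transparent, but the moment argument is the more economical one here.
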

\begin{proof}
Let $K > 0$ and
\begin{equation} \label{loc gamma}
 \gamma^n_K = \inf\{t > 0; \varepsilon_n^{-1}|X_t-X^n_t| \geq K \} \wedge T.
\end{equation}
Using the tightness of the family \eqref{sup diff}, we get
\begin{equation} \label{loc gamma unif}
 \lim_{K \to \infty} \sup_{n \in \mathbb{N}}\bbP[ \gamma^n_K < T] = 0.
\end{equation}
Therefore, it is enough to show that for any $K>0$,
\begin{equation*}
 \varepsilon_n^{-p} \sup_{j \geq 0}(\crochet{X}_{\tau^n_{j+1} \wedge \gamma^n_K}  -\crochet{X}_{\tau^n_j \wedge \gamma^n_K}) \to 0,
\end{equation*}
in probability. Take an integer $m > 2/(2-p)$. Since
\begin{equation*}
 \sup_{j \geq 0} 
(\crochet{X}_{\tau^n_{j+1} \wedge \gamma^n_K}  -\crochet{X}_{\tau^n_j \wedge \gamma^n_K})
  \leq \Big(
\sum_{j=0}^{\infty}
(\crochet{X}_{\tau^n_{j+1} \wedge \gamma^n_K}  -\crochet{X}_{\tau^n_j \wedge \gamma^n_K})^m 
\Big)^{1/m},
\end{equation*}
the statement of the lemma follows from the fact that
\begin{equation*}
\begin{split}
\bbE \Big[ \sum_{j=0}^{\infty}
(\crochet{X}_{\tau^n_{j+1} \wedge \gamma^n_K} - 
\crochet{X}_{\tau^n_j \wedge \gamma^n_K})^m \Big]
& \leq
C
\bbE \Big[ \sum_{j=0}^{\infty}
\big(\sup_{t \geq 0}|X_{\tau^n_{j+1} \wedge \gamma^n_K \wedge t} - 
X_{\tau^n_j \wedge \gamma^n_K \wedge t}|\big)^{2m} \Big]
 \\
& \leq
C \varepsilon_n^{2m-2}
\bbE \Big[ \sum_{j=0}^{\infty}
\big(\sup_{t \geq 0}|X_{\tau^n_{j+1} \wedge \gamma^n_K \wedge t} - 
X_{\tau^n_j \wedge \gamma^n_K \wedge t}|\big)^2 \Big]
 \\
& \leq C
\varepsilon_n^{2m-2}
\bbE \Big[ \sum_{j=0}^{\infty}
(\crochet{X}_{\tau^n_{j+1} \wedge \gamma^n_K} - 
\crochet{X}_{\tau^n_j \wedge \gamma^n_K}) \Big]
 \\
 & \leq C \varepsilon_n^{2m-2}.
\end{split}
\end{equation*}
Here we have used that $\bbE[\crochet{X}_T] < \infty$. The result follows using H\"older inequality.
\end{proof}

\begin{lem} \label{lemma: condition add}
For any $p \in [0,2)$ and $T_0 \in [0,T)$,
\begin{equation*}
 \varepsilon_n^{-p} \sup_{j \geq 0}(\tau^n_{j+1} \wedge T_0 -\tau^n_j \wedge T_0) \to 0,
\end{equation*}
in probability. In particular,
the convergence in probability \eqref{Condition add} holds for all $T_0 \in [0,T)$.
\end{lem}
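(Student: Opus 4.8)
The plan is to control the sampling mesh on $[0,T_0]$ by the mesh that the grid $(\tau^n_j)$ induces on $\crochet{X}$, and then to invoke Lemma~\ref{lemma: conv diff qv}. Since $\crochet{X}_t=\int_0^t(\sigma^X_u)^2\,du$, for each $j$ we have
$$\crochet{X}_{\tau^n_{j+1}\wedge T_0}-\crochet{X}_{\tau^n_j\wedge T_0}=\int_{\tau^n_j\wedge T_0}^{\tau^n_{j+1}\wedge T_0}(\sigma^X_u)^2\,du\;\geq\;\underline{\sigma}_{T_0}\,\paren{\tau^n_{j+1}\wedge T_0-\tau^n_j\wedge T_0},\qquad \underline{\sigma}_{T_0}:=\inf_{u\in[0,T_0]}(\sigma^X_u)^2 .$$
I would first record that $\underline{\sigma}_{T_0}>0$ almost surely. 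Indeed, by Assumption~\ref{assump_model} the process $\sigma^X$ is continuous and positive on $[0,T)$, and because $T_0<T$ the interval $[0,T_0]$ is a compact subset of $[0,T)$; hence the infimum of the continuous, strictly positive function $(\sigma^X)^2$ over $[0,T_0]$ is attained and positive. This is the only point where the hypothesis $T_0<T$ genuinely enters: $\sigma^X$ is allowed to vanish at maturity (as in the Black--Scholes delta hedge of Example~\ref{example : bs}), so one cannot hope to take $T_0=T$ here. I regard this small compactness argument as the only delicate ingredient; everything else is bookkeeping.

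Dividing the displayed inequality by $\underline{\sigma}_{T_0}$, taking the supremum over $j\geq 0$, and using that $\crochet{X}$ is nondecreasing, so that $\crochet{X}_{\tau^n_{j+1}\wedge T_0}-\crochet{X}_{\tau^n_j\wedge T_0}\leq\crochet{X}_{\tau^n_{j+1}}-\crochet{X}_{\tau^n_j}$ for every $j$ (a one-line check according to the position of $T_0$ relative to $\tau^n_j$ and $\tau^n_{j+1}$), I obtain
$$\varepsilon_n^{-p}\sup_{j\geq 0}\paren{\tau^n_{j+1}\wedge T_0-\tau^n_j\wedge T_0}\;\leq\;\frac{1}{\underline{\sigma}_{T_0}}\;\varepsilon_n^{-p}\sup_{j\geq 0}\paren{\crochet{X}_{\tau^n_{j+1}}-\crochet{X}_{\tau^n_j}}.$$

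By Lemma~\ref{lemma: conv diff qv}, the factor $\varepsilon_n^{-p}\sup_{j\geq 0}(\crochet{X}_{\tau^n_{j+1}}-\crochet{X}_{\tau^n_j})$ converges to $0$ in probability for every $p\in[0,2)$; since $1/\underline{\sigma}_{T_0}$ is an almost surely finite random variable not depending on $n$, the product still converges to $0$ in probability. This establishes the first claim. The ``in particular'' assertion then follows by specialising to $p=4/3\in[0,2)$, which is precisely the convergence \eqref{Condition add}, for every $T_0\in[0,T)$.
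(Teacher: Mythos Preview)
Your proof is correct and follows essentially the same route as the paper: bound the time mesh by the $\crochet{X}$-mesh using that $(\sigma^X)^2$ is bounded away from zero on $[0,T_0]$, then invoke Lemma~\ref{lemma: conv diff qv}. The only cosmetic difference is that the paper localizes via the stopping time $\hat\gamma_K=\inf\{t:\sigma^X_t\le 1/K\}\wedge T_0$ to get a deterministic lower bound $1/K$, whereas you work directly with the almost surely positive random variable $\underline{\sigma}_{T_0}$; both arguments are equivalent.
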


\begin{proof}
 Let $T_0 \in [0,T)$, $K > 0$ and
\begin{equation} 
 \hat{\gamma}_K = \inf\{t > 0;  \sigma^X_t \leq 1/K\} \wedge T_0.
\end{equation}
Using the continuity and the positivity of $\sigma^X$ on $[0,T)$, we get
\begin{equation*}
 \lim_{K \to \infty} \bbP[ \hat{\gamma}_K < T_0] = 0.
\end{equation*}
Therefore, it is enough to show that for any $K>0$,
\begin{equation*}
 \varepsilon_n^{-p} \sup_{j \geq 0} (\tau^n_{j+1} \wedge \hat{\gamma}_K - 
\tau^n_j \wedge \hat{\gamma}_K) \to 0,
\end{equation*}
in probability. 
This follows from Lemma~\ref{lemma: conv diff qv} since
\begin{equation*}
\sup_{j \geq 0} (\tau^n_{j+1} \wedge \hat{\gamma}_K - 
\tau^n_j \wedge \hat{\gamma}_K)
\leq C  \sup_{j \geq 0}(\crochet{X}_{\tau^n_{j+1}}  -\crochet{X}_{\tau^n_j}).
\end{equation*}
\end{proof}

\noindent 
We now give the end of the proof of Proposition~\ref{prop: intuition a s}.
Define $\gamma^n_K$ by \eqref{loc gamma}.
It suffices to show that for any $K>0$
\begin{equation*}
 \begin{split}
&  \sup_{t\geq 0}
\Big|
\varepsilon_n^{-2}\crochet{Z^n}_{t \wedge \gamma^n_K} - 
\frac{1}{6}\int_0^{t \wedge \gamma^n_K}a_u^2(\sigma^Y_u)^2du
\Big| \to 0, \\
& \sup_{t\geq 0}
\Big|
\varepsilon_n^{-1}\crochet{Z^n, Y}_{t \wedge \gamma^n_K}
- \frac{1}{3}\int_0^{{t \wedge \gamma^n_K}}s_u(\sigma^Y_u)^2du 
\Big| \to 0,
 \end{split}
\end{equation*}
in probability.
Since 
\begin{equation} \label{unif bound}
\varepsilon_n^{-1} \sup_{t \geq 0} |X_{t\wedge \gamma^n_K}-
X^n_{t\wedge \gamma^n_K}| \leq K,
\end{equation}
the families $\varepsilon_n^{-2}\crochet{Z^n}_{\cdot \wedge \gamma^n_K}$ and
$\varepsilon_n^{-1}\crochet{Z^n,Y}_{\cdot \wedge \gamma^n_K}$ are equicontinuous.
So we just need to prove that for any $t \in [0,T)$,
\begin{equation*}
 \begin{split}
& \varepsilon_n^{-2}\crochet{Z^n}_{t \wedge \gamma^n_K} - 
\frac{1}{6}\int_0^{t \wedge \gamma^n_K}a_u^2(\sigma^Y_u)^2du \to 0, \\
& 
\varepsilon_n^{-1}\crochet{Z^n, Y}_{t \wedge \gamma^n_K}-  \frac{1}{3}\int_0^{{t \wedge \gamma^n_K}}s_u(\sigma^Y_u)^2du \to 0,
 \end{split}
\end{equation*}
in probability. Let
\begin{equation*}
 \beta^n_M = \inf\{u > 0 ; (1/\sigma^X_u) \geq M  \} \wedge t \wedge \gamma^n_K
\end{equation*}
for $M > 0$. Since $t < T$, Lemma~\ref{lemma: conv diff qv} gives that
\begin{equation*}
 \lim_{M\to \infty} \sup_{n \in \mathbb{N}}\bbP[\beta^n_M < t \wedge \gamma^n_K] =0.
\end{equation*}
Therefore it is enough to show that for any $M>0$,
\begin{equation*}
 \begin{split}
& \varepsilon_n^{-2}\crochet{Z^n}_{\beta^n_M} - 
\frac{1}{6}\int_0^{\beta^n_M}a_u^2(\sigma^Y_u)^2du \to 0, \\
& 
\varepsilon_n^{-1}\crochet{Z^n, Y}_{\beta^n_M}-  \frac{1}{3}\int_0^{\beta^n_M}s_u(\sigma^Y_u)^2du \to 0,
 \end{split}
\end{equation*}
in probability. From the assumptions of Proposition \ref{prop: intuition a s}, we have
\begin{align*}
&
\varepsilon_n^{-1}\sum_{j=0}^{N^n_{\beta^n_M}}\kappa_{\tau^n_j}\mathbb{E}\big[
\Delta_{j, n}^3
 \big| \mathcal{F}_{\tau^n_j}
\big]
+  \int_0^{\beta^n_M}s_u(\sigma^Y_u)^2du \to 0, \\
&
\varepsilon_n^{-2}\sum_{j=0}^{N^n_{\beta^n_M}}\kappa_{\tau^n_j}
\mathbb{E}\big[
\Delta_{j, n}^4
 \big| \mathcal{F}_{\tau^n_j}
\big]
- \int_0^{\beta^n_M}a_u^2(\sigma^Y_u)^2du \to 0,
\end{align*}
in probability.  Moreover, by It$\hat{\text{o}}$'s formula,
\begin{equation*}
\begin{split}
&\frac{1}{3} \varepsilon_n^{-1}\sum_{j=0}^{N^n_{\beta^n_M}}\kappa_{\tau^n_j}\mathbb{E}\big[
\Delta_{j, n}^3
 \big| \mathcal{F}_{\tau^n_j}
\big]
=
\varepsilon_n^{-1}\sum_{j=0}^{N^n_{\beta^n_M}}\kappa_{\tau^n_j}\mathbb{E}\big[
\int_{\tau^n_j}^{\tau^n_{j+1}} (X_u - X^n_{\tau^n_j})d \crochet{X}_u
 \big| \mathcal{F}_{\tau^n_j}
\big], \\
&\frac{1}{6} \varepsilon_n^{-2}\sum_{j=0}^{N^n_{\beta^n_M}}\kappa_{\tau^n_j}\mathbb{E}\big[
\Delta_{j, n}^4
 \big| \mathcal{F}_{\tau^n_j}
\big]
=
\varepsilon_n^{-2}\sum_{j=0}^{N^n_{t\wedge \beta^K}}\kappa_{\tau^n_j}\mathbb{E}\big[
\int_{\tau^n_j}^{\tau^n_{j+1}} (X_u - X^n_{\tau^n_j})^2d \crochet{X} _u
 \big| \mathcal{F}_{\tau^n_j}
\big].
\end{split}
\end{equation*}
Now remark that the following convergences in probability hold:
\begin{equation}\label{lenglart}
 \begin{split}
& \sup_{t\geq 0}\Big| \varepsilon_n^{-1}\sum_{j=0}^{N^n_{\beta^n_M}}\kappa_{\tau^n_j}
\int_{\tau^n_j}^{\tau^n_{j+1}} (X_u - X^n_{\tau^n_j})d \crochet{X}_u
 - \varepsilon_n^{-1}\sum_{j=0}^{N^n_{\beta^n_M}}\kappa_{\tau^n_j}\mathbb{E}\big[
\int_{\tau^n_j}^{\tau^n_{j+1}} (X_u - X^n_{\tau^n_j})d \crochet{X}_u
 \big| \mathcal{F}_{\tau^n_j}
\big] \Big| \to 0,
\\
& \sup_{t\geq 0}\Big| \varepsilon_n^{-2}\sum_{j=0}^{N^n_{\beta^n_M}}\kappa_{\tau^n_j}
\int_{\tau^n_j}^{\tau^n_{j+1}} (X_u - X^n_{\tau^n_j})^2d \crochet{X}_u
 - \varepsilon_n^{-2}\sum_{j=0}^{N^n_{\beta^n_M}}\kappa_{\tau^n_j}\mathbb{E}\big[
\int_{\tau^n_j}^{\tau^n_{j+1}} (X_u - X^n_{\tau^n_j})^2d \crochet{X}_u
 \big| \mathcal{F}_{\tau^n_j}
\big] \Big|\to 0.
 \end{split}
\end{equation}
Indeed, as seen in the proofs of Lemmas~\ref{lem1} and \ref{lem2},
the convergences in probability in \eqref{lenglart} are deduced from the following ones:
\begin{equation}\label{lindeberg}
 \begin{split}
&  \varepsilon_n^{-2}\sum_{j=0}^{N^n_{\beta^n_M}}\kappa_{\tau^n_j}^2\mathbb{E}\big[
\big(
\int_{\tau^n_j}^{\tau^n_{j+1}} (X_u - X^n_{\tau^n_j})d \crochet{X}_u \big)^2
 \big| \mathcal{F}_{\tau^n_j}
\big]  \to 0,\\
&  \varepsilon_n^{-4}\sum_{j=0}^{N^n_{\beta^n_M}}\kappa_{\tau^n_j}^2\mathbb{E}\big[
\big(
\int_{\tau^n_j}^{\tau^n_{j+1}} (X_u - X^n_{\tau^n_j})^2d \crochet{X}_u \big)^2
 \big| \mathcal{F}_{\tau^n_j}
\big]  \to 0.
 \end{split}
\end{equation}
Since $Q_n=\varepsilon_n^{-4}\sup_{t \in [0,T]}|X^n_t-X_t|^4$ is uniformly integrable and
\begin{equation*}
 \sum_{j=0}^{\infty}(\crochet{X}_{\tau^n_{j+1}} - \crochet{X}_{\tau^n_j})^2
\end{equation*}
is bounded and converges to $0$ in probability by Lemma~\ref{lemma: conv diff qv}, we have
\begin{equation*}
 \bbE\Big[
\sum_{j=0}^{N^n_{\beta^n_M}}\kappa_{\tau^n_j}^2
\big(
\int_{\tau^n_j}^{\tau^n_{j+1}} \varepsilon_n^{-k}(X_u - X^n_{\tau^n_j})^kd \crochet{X}_u \big)^2
\Big]
\leq C \bbE \Big[
Q_n^{k/2} \sum_{j=0}^{\infty}(\crochet{X}_{\tau^n_{j+1}} - \crochet{X}_{\tau^n_j})^2
\Big] \to 0
\end{equation*}
for $k=1,2$, which gives \eqref{lindeberg}.\\

\noindent We also have

\begin{equation}\label{edge}
 \begin{split}
& \sup_{t \geq 0}\Big|  \varepsilon_n^{-1}\sum_{j=0}^{N^n_{\beta^n_M}}\kappa_{\tau^n_j}
\int_{\tau^n_j}^{\tau^n_{j+1}} (X_u - X^n_{\tau^n_j})d \crochet{X}_u
-\varepsilon_n^{-1}\sum_{j=0}^{\infty}\kappa_{\tau^n_j}
\int_{\tau^n_j \wedge \beta^n_M}^{\tau^n_{j+1} \wedge \beta^n_M} (X_u - X^n_{\tau^n_j})d \crochet{X}_u  \Big| \to 0,\\
&  \sup_{t \geq 0}\Big|  \varepsilon_n^{-2}\sum_{j=0}^{N^n_{\beta^n_M}}\kappa_{\tau^n_j}
\int_{\tau^n_j}^{\tau^n_{j+1}} (X_u - X^n_{\tau^n_j})^2d \crochet{X}_u
-   \varepsilon_n^{-2}\sum_{j=0}^{\infty}\kappa_{\tau^n_j}
\int_{\tau^n_j \wedge \beta^n_M}^{\tau^n_{j+1}\wedge \beta^n_M} (X_u - X^n_{\tau^n_j})^2d \crochet{X}_u \Big| \to 0,
 \end{split}
\end{equation}
in probability. These two convergences follow using that 
\begin{equation*}
 \sup_{j \geq 0, t \in [0,T]} 
\int_{\tau^n_j  \wedge t}^{\tau^n_{j+1} \wedge t} \varepsilon_n^{-i}|X_u-X^n_u|^i d \crochet{X}_u
\to 0
\end{equation*}
in probability for $i=1,2$, which is deduced from \eqref{loc gamma unif} and the fact that
\begin{equation*}
 \sup_{j \geq 0, t \geq 0} 
\int_{\tau^n_j \wedge \gamma^n_K \wedge t}^{\tau^n_{j+1} \wedge \gamma^n_K \wedge t} \varepsilon_n^{-i}|X_u-X^n_u|^i d \crochet{X}_u
\leq K^i \sup_{j\geq 0}(\crochet{X}_{\tau^n_{j+1}} - \crochet{X}_{\tau^n_j}) \to 0,
\end{equation*}
in probability, by Lemma~\ref{lemma: conv diff qv}.\\

\noindent Finally, remark that the uniform continuity of $\kappa$ and \eqref{unif bound} imply

\begin{equation}\label{disc kappa3}
 \begin{split}
& \sup_{t \geq 0}\Big|  \varepsilon_n^{-1}\sum_{j=0}^{\infty}\kappa_{\tau^n_j}
\int_{\tau^n_j \wedge \beta^n_M}^{\tau^n_{j+1} \wedge \beta^n_M} (X_u - X^n_{\tau^n_j})d \crochet{X}_u
+ \varepsilon_n^{-1}\crochet{Z^n,Y}_{\beta^n_M} \Big| \to 0,\\
&  \sup_{t \geq 0}\Big|  \varepsilon_n^{-2}\sum_{j=0}^{\infty}\kappa_{\tau^n_j}
\int_{\tau^n_j \wedge \beta^n_M}^{\tau^n_{j+1}\wedge \beta^n_M} (X_u - X^n_{\tau^n_j})^2d \crochet{X}_u
-  \varepsilon_n^{-2}\crochet{Z^n}_{\beta^n_M} \Big| \to 0,
 \end{split}
\end{equation}
in probability. Then Proposition~\ref{prop: intuition a s} is eventually obtained from \eqref{lenglart} together with \eqref{edge} and \eqref{disc kappa3}.

\subsection{Proof of Proposition \ref{prop : hitting strats}}

\subsubsection{Proof of the convergence in law to  \eqref{eqn: stable limit}}
We start with the stable convergence in law of the renormalized hedging error.
Such convergence being stable against localization procedures, we can assume without loss of generality that 
$|b^X|$,
$\sigma^X$, $|b^Y|$, $\sigma^Y$, $1/\sigma^Y$, $\ul$, $1/\ul$, $\dl$ and $1/\dl$ are bounded
by a constant $K >0$. 
Then in particular we have $\varepsilon_n^{-1}\sup_{t \in [0,T]}|X^n_t-X_t| \leq K$.\\

\noindent By Lemma~\ref{lemma: condition add}, we have
\eqref{Condition add} for all $T_0 \in [0,T)$.
Furthermore $\varepsilon_n^{-1}\crochet{Z^n,Y}$ and $\varepsilon_n^{-2}\crochet{Z^n}$ are equicontinuous.
Therefore the uniform convergences in probability \eqref{asmp: limit 2} and \eqref{asmp: limit 3} follow from 
the corresponding convergences in probability at each $t \in [0,T)$.\\

\noindent Fix $T_0 \in [0,T)$ and define $\hat{\alpha}^K$ by \eqref{alphahat}.
Then we have \eqref{alphahat conv} and so,
we can assume without loss of generality that
$1/\sigma^X \leq K$ in order to show the
convergences
\eqref{asmp: limit 2} and \eqref{asmp: limit 3} on $[0,T_0]$.
Also, thanks to the Girsanov-Maruyama transformation,
we can assume $b^X=0$.
Define for $\delta >0$ and $t \in [0,T_0]$
\begin{equation*}
 w_t(\delta) = \sup \{ |\ul_u-\ul_v| + |\dl_u-\dl_v| ; 0\leq u \leq t, \ 0\leq v \leq t,\ |u-v|\leq \delta \}.
\end{equation*}
Since $\ul$ and $\dl$ are continuous and bounded, we have
\begin{equation*}
 \bbE[w_{T_0}(\delta)] \to 0
\end{equation*}
as $\delta \to 0$. Let
\begin{equation*}
 T^n_K = \inf\{t > 0; w_t(\varepsilon_n) \geq K \bbE[w_{T_0}(\varepsilon_n)]\}\wedge T_0.
\end{equation*}
Note that
\begin{equation*}
 \sup_{n \in \mathbb{N}}\bbP[T^n_K < T_0]
\leq \sup_{n \in \mathbb{N}}\bbP[
 w_{T_0}(\varepsilon_n) \geq K \bbE[w_{T_0}(\varepsilon_n)]]
\leq \frac{1}{K} \to 0,
\end{equation*}
as $K \to \infty$.
On the set $\{T^n_K<T_0\}$, we can replace $\ul$ and $\dl$ by 
$\ul_{\cdot \wedge T^n_K}$ and
$\dl_{\cdot \wedge T^n_K}$ respectively. This means that we can assume without loss of generality that
$w_{T_0}(\varepsilon_n) \leq K \bbE[w_{T_0}(\varepsilon_n)]$.
Now in order to apply Proposition~\ref{prop: intuition a s},
it remains to show \eqref{asmp: suff}.

\subsubsection*{Part 1: Technical lemma}
We give here a first technical lemma. 
\begin{lem}\label{lem tech}
The sequence $\varepsilon_n^2N^n_{T_0}$ is tight. 
\end{lem}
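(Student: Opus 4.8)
The plan is to turn the lower bound on the barriers into a lower bound on the size of each completed rebalancing step, and then to control the number of such steps by the total quadratic variation of $X$ on $[0,T_0]$, which is bounded under the localization. For this lemma I keep only the first localization already in force (all of $|b^X|$, $\sigma^X$, $1/\ul$ and $1/\dl$ bounded by $K$) and argue directly under $\bbP$; I do \emph{not} pass through the Girsanov reduction $b^X=0$ here, because the conclusion is a tightness statement and tightness, unlike convergence in probability, need not survive an equivalent change of measure. Transferring the result from the localized problem back to the original one is then routine, using that the localizing stopping time tends to $T$ in probability.

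The key step is the observation that, since $X$, $\dl$ and $\ul$ are continuous, whenever $\tau^n_{j+1}<T$ the process $X$ actually touches a barrier at $\tau^n_{j+1}$, so that $X_{\tau^n_{j+1}}-X_{\tau^n_j}\in\{\varepsilon_n\ul_{\tau^n_{j+1}},\,-\varepsilon_n\dl_{\tau^n_{j+1}}\}$, whence $|X_{\tau^n_{j+1}}-X_{\tau^n_j}|\ge \varepsilon_n/K$ because $\dl_t,\ul_t\ge 1/K$. For $0\le j\le N^n_{T_0}-1$ one has $\tau^n_{j+1}\le\tau^n_{N^n_{T_0}}\le T_0<T$, so these steps really do end before $T$ (hence the truncation $\wedge T$ in the definition of $\tau^n_{j+1}$ is irrelevant for them and the bound $\varepsilon_n/K$ legitimately applies), and moreover $\tau^n_j\wedge T_0=\tau^n_j$ and $\tau^n_{j+1}\wedge T_0=\tau^n_{j+1}$. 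Therefore
\[
\frac{\varepsilon_n^2}{K^2}\,N^n_{T_0}\;\le\;\sum_{j=0}^{N^n_{T_0}-1}\big(X_{\tau^n_{j+1}}-X_{\tau^n_j}\big)^2\;\le\;\sum_{j=0}^{\infty}\big(X_{\tau^n_{j+1}\wedge T_0}-X_{\tau^n_j\wedge T_0}\big)^2 .
\]
Taking expectations, I split each increment into its drift part $\int_{\tau^n_j\wedge T_0}^{\tau^n_{j+1}\wedge T_0}b^X_u\,du$, bounded in absolute value by $K(\tau^n_{j+1}\wedge T_0-\tau^n_j\wedge T_0)$ so that its square sums to at most $K^2T_0^2$, and its martingale part, for which the conditional It\^o isometry followed by telescoping yields a sum of expectations equal to $\bbE\big[\int_0^{T_0}(\sigma^X_u)^2\,du\big]\le K^2T_0$. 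Hence $\bbE\big[\sum_{j\ge 0}(X_{\tau^n_{j+1}\wedge T_0}-X_{\tau^n_j\wedge T_0})^2\big]\le C$ for a constant $C=C(K,T_0)$ independent of $n$, so $\bbE[\varepsilon_n^2 N^n_{T_0}]\le K^2C$ for all $n$, and Markov's inequality gives $\sup_n\bbP[\varepsilon_n^2 N^n_{T_0}>M]\le K^2C/M\to 0$ as $M\to\infty$, which establishes tightness.

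I do not expect a genuine obstacle. The only points that need a little care are the bookkeeping at the right endpoint just mentioned, and the justification that the telescoping series of quadratic-variation increments sums exactly to $\int_0^{T_0}(\sigma^X_u)^2\,du$ (which uses $\tau^n_j\to T>T_0$ and the continuity of $\crochet{X}$); the one point requiring a bit of attention is to remember that only the first reduction may be invoked, so as not to jeopardise the transfer of tightness — a matter of care rather than a real difficulty.
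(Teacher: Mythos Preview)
Your proof is correct and follows the same core idea as the paper's: bound each completed increment from below by $\varepsilon_n/K$ using the barrier bounds, then control $\varepsilon_n^2 N^n_{T_0}$ by the sum of squared increments of $X$. The paper's argument is a touch shorter: having already reduced to $b^X=0$, it simply observes that $\sum_{j=0}^{N^n_{T_0}}(X_{\tau^n_{j+1}\wedge T_0}-X_{\tau^n_j\wedge T_0})^2\to\crochet{X}_{T_0}$ in probability (realized quadratic variation converges once the mesh tends to zero, which is Lemma~\ref{lemma: condition add}), and a sequence converging in probability is tight. Your route via an $L^1$ bound and Markov's inequality is a perfectly good alternative.

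One correction, however: your stated reason for avoiding the Girsanov reduction is mistaken. Tightness \emph{does} survive an equivalent change of measure. If $\bbP\ll\hat{\bbQ}$, then for every $\epsilon>0$ there exists $\delta>0$ such that $\hat{\bbQ}[A]<\delta$ implies $\bbP[A]<\epsilon$ (this is just absolute continuity of the integral of $d\bbP/d\hat{\bbQ}\in L^1(\hat{\bbQ})$), so $\sup_n\hat{\bbQ}[\varepsilon_n^2N^n_{T_0}>M]<\delta$ forces $\sup_n\bbP[\varepsilon_n^2N^n_{T_0}>M]<\epsilon$. Hence the paper's use of the Girsanov--Maruyama reduction before this lemma is legitimate, and your detour through the drift term, while harmless, is not needed.
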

\begin{proof}
Since
\begin{equation*}
 |X_{\tau^n_{j+1} \wedge T_0}-X_{\tau^n_j \wedge T_0}|^2 \geq \frac{\varepsilon_n^2}{K^2},
\end{equation*}
we have
\begin{equation*}
 \varepsilon_n^2 N^n_{T_0} \leq K^2 \sum_{j=0}^{N^n_{T_0}}
 (X_{\tau^n_{j+1} \wedge T_0}-X_{\tau^n_j \wedge T_0})^2 \to K^2 \crochet{X}_{T_0},
\end{equation*}
in probability by Lemma~\ref{lemma: condition add}.
\end{proof}

\subsubsection*{Part 2: Approximation lemma} 
We give here an important 
 result. Let
$\tilde{\tau}^n_{j+1}$ be the exit time of fixed barriers defined by 
\begin{equation}
\tilde{\tau}^n_{j+1} = \inf\big\{t>\tau^n_j: X_t \notin (X_{\tau^n_j}- \varepsilon_n \dl_{\tau^n_j},X_{\tau^n_j}+\varepsilon_n \ul_{\tau^n_j} )\big\} \wedge T_0.
\end{equation}
We have the following lemma.
\begin{lem}\label{key of lem}
We have
\begin{equation*}
\sum_{j=0}^{N^n_{T_0}}\bbE\big[(\tilde{\tau}^n_{j+1} -\tau^n_{j+1})| \calF_{\tau^n_j}\big]\to 0,
\end{equation*}
in probability.
\end{lem}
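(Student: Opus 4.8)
The plan is to compare $\tau^n_{j+1}$ with the frozen--barrier time $\tilde\tau^n_{j+1}$ by squeezing both between the exit times of two \emph{fixed} boxes obtained from the frozen barriers $\varepsilon_n\dl_{\tau^n_j},\varepsilon_n\ul_{\tau^n_j}$ upon rescaling their half--widths by $1\pm\beta_n'$, where $\beta_n'\to0$ controls the relative variation of the barriers over one rebalancing interval. I would keep the localization already in force: $b^X=0$; $\sigma^X,1/\sigma^X,\dl,1/\dl,\ul,1/\ul$ bounded by $K$ on $[0,T_0]$; and $w_{T_0}(\varepsilon_n)\le K\,\bbE[w_{T_0}(\varepsilon_n)]=:\beta_n\to0$. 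By Lemma~\ref{lemma: condition add} with $p=1$, after stopping at a stopping time of probability tending to one I may also assume $\sup_j(\tau^n_{j+1}\wedge T_0-\tau^n_j\wedge T_0)\le\varepsilon_n$; then, setting $\beta_n':=K\beta_n\to0$, one gets $|\dl_u-\dl_{\tau^n_j}|\vee|\ul_u-\ul_{\tau^n_j}|\le\beta_n\le\beta_n'(\dl_{\tau^n_j}\wedge\ul_{\tau^n_j})$ whenever $u\in[0,T_0]$ with $|u-\tau^n_j|\le\varepsilon_n$. Let $\underline\sigma^n_j$ and $\overline\sigma^n_j$ be the exit times, capped at $T_0$, of the fixed boxes obtained from $(X_{\tau^n_j}-\varepsilon_n\dl_{\tau^n_j},X_{\tau^n_j}+\varepsilon_n\ul_{\tau^n_j})$ by scaling the half--widths by $1-\beta_n'$ and $1+\beta_n'$ respectively. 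The box of $\tilde\tau^n_{j+1}$ is nested between these two, so $\underline\sigma^n_j\le\tilde\tau^n_{j+1}\le\overline\sigma^n_j$ immediately; and since the time--varying tube, on $[\tau^n_j,\tau^n_{j+1}\wedge T_0]$, has half--widths lying between those of the two fixed boxes, a short argument by contradiction (using that $X$ exits the enlarged box at a point on its boundary while it must still be inside the tube) gives $\underline\sigma^n_j\le\tau^n_{j+1}\wedge T_0\le\overline\sigma^n_j$ as well. Hence $|\tilde\tau^n_{j+1}-\tau^n_{j+1}\wedge T_0|\le\overline\sigma^n_j-\underline\sigma^n_j$, and it suffices to show $\sum_{j=0}^{N^n_{T_0}}\bbE[\overline\sigma^n_j-\underline\sigma^n_j\mid\calF_{\tau^n_j}]\to0$ in probability.

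The key step is the estimate of $\bbE[\overline\sigma^n_j-\underline\sigma^n_j\mid\calF_{\tau^n_j}]$. With $b^X=0$, $X$ is a continuous martingale with $\crochet{X}_{T_0}\le K^2T_0$, so optional stopping gives $\bbE[(X_\sigma-X_{\tau^n_j})^2\mid\calF_{\tau^n_j}]=\bbE[\crochet{X}_\sigma-\crochet{X}_{\tau^n_j}\mid\calF_{\tau^n_j}]$ for $\sigma\in\{\underline\sigma^n_j,\overline\sigma^n_j\}$, and from $\sigma^X\ge1/K$,
\[
\bbE\big[\overline\sigma^n_j-\underline\sigma^n_j\mid\calF_{\tau^n_j}\big]\le K^2\Big(\bbE\big[(X_{\overline\sigma^n_j}-X_{\tau^n_j})^2\mid\calF_{\tau^n_j}\big]-\bbE\big[(X_{\underline\sigma^n_j}-X_{\tau^n_j})^2\mid\calF_{\tau^n_j}\big]\Big).
\]
When the exit occurs before $T_0$, $X_{\overline\sigma^n_j}-X_{\tau^n_j}$ is a mean--zero variable with the two values $-(1+\beta_n')\varepsilon_n\dl_{\tau^n_j}$ and $(1+\beta_n')\varepsilon_n\ul_{\tau^n_j}$, so its conditional second moment equals $(1+\beta_n')^2\varepsilon_n^2\dl_{\tau^n_j}\ul_{\tau^n_j}$, and similarly with $1-\beta_n'$ for $\underline\sigma^n_j$; the difference is $4\beta_n'\varepsilon_n^2\dl_{\tau^n_j}\ul_{\tau^n_j}\le 4K^4\beta_n'\varepsilon_n^2$, so that, modulo the clamping correction discussed below, $\bbE[\overline\sigma^n_j-\underline\sigma^n_j\mid\calF_{\tau^n_j}]\le 4K^4\beta_n'\varepsilon_n^2$. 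Summing over $j\le N^n_{T_0}$ and using the tightness of $\varepsilon_n^2N^n_{T_0}$ (Lemma~\ref{lem tech}) together with $\beta_n'\to0$ makes this contribution $4K^4\beta_n'\varepsilon_n^2(N^n_{T_0}+1)$ vanish in probability.

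It remains to absorb the correction from boxes for which $\overline\sigma^n_j$ is clamped at $T_0$, which breaks the exact cancellation: for those one falls back on the crude bound $\bbE[\overline\sigma^n_j-\underline\sigma^n_j\mid\calF_{\tau^n_j}]\le 4K^4\varepsilon_n^2$, so their total contribution to the sum is at most $4K^4\varepsilon_n^2\#\{j\le N^n_{T_0}:\overline\sigma^n_j=T_0\}$. Since $\bbE[\overline\sigma^n_j-\tau^n_j\mid\calF_{\tau^n_j}]=O(\varepsilon_n^2)$, Markov's inequality gives $\bbP[\overline\sigma^n_j=T_0\mid\calF_{\tau^n_j}]\le C\varepsilon_n^2/(T_0-\tau^n_j)$, and splitting according to whether $\tau^n_j\le T_0-\varepsilon_n^{2-\alpha}$ or not, for a small fixed $\alpha>0$, one checks this term vanishes in probability as well, using $\bbE[N^n_{T_0}]=O(\varepsilon_n^{-2})$ (which follows from $(X_{\tau^n_{j+1}}-X_{\tau^n_j})^2\ge\varepsilon_n^2/K^2$ for $j<N^n_{T_0}$ and $\bbE[\crochet{X}_{T_0}]\le K^2T_0$) and the continuity of $\crochet X$ near $T_0$. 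The main obstacle is precisely that the naive estimate is worthless — each $|\tilde\tau^n_{j+1}-\tau^n_{j+1}|$ is of order $\varepsilon_n^2$ while there are of order $\varepsilon_n^{-2}$ summands — so one must extract the vanishing \emph{relative} gap $\beta_n'$ between the two barrier families and pair it with the tightness of $\varepsilon_n^2N^n_{T_0}$; what makes the cancellation $(1+\beta_n')^2-(1-\beta_n')^2=4\beta_n'$ hold on the nose is the clean two--value exit law of the martingale $X$. The boxes clamped at $T_0$ are only a minor technical nuisance, dispatched as above.
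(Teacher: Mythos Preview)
Your proof is correct and follows essentially the same route as the paper: sandwich both $\tau^n_{j+1}$ and $\tilde\tau^n_{j+1}$ between exit times $\underline\sigma^n_j\le\overline\sigma^n_j$ of fixed boxes obtained from the frozen one by shrinking/enlarging, then show $\bbE[\overline\sigma^n_j-\underline\sigma^n_j\mid\calF_{\tau^n_j}]=O(\beta_n'\varepsilon_n^2)$ and conclude via the tightness of $\varepsilon_n^2N^n_{T_0}$.

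There are two minor differences worth noting. First, the paper does not localize via Lemma~\ref{lemma: condition add}; instead it splits each term according to whether $\tilde\tau^n_{j+1}\vee\tau^n_{j+1}\ge\tau^n_j+\varepsilon_n$ (long intervals, killed by Brownian-exit-time moment bounds) or not (short intervals, where your sandwich applies). Your localization is a clean substitute for the ``long interval'' case. Second, for the gap estimate the paper uses the strong Markov property at the smaller exit time $\check J$ and the formula $\bbE[\text{exit from }(-a,b)\text{ starting at }x]=(a+x)(b-x)$, whereas you use the identity $\bbE[(X_\sigma-X_{\tau^n_j})^2\mid\calF_{\tau^n_j}]=ab$; these give the same order $\varepsilon_n^2\beta_n'$.

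One correction: your ``clamping'' paragraph is unnecessary and slightly misleading. Since $\underline\sigma^n_j\le\overline\sigma^n_j$ are nested Brownian exit times capped at the same value $c:=\crochet{X}_{T_0}-\crochet{X}_{\tau^n_j}$ in the DDS clock, and $a\wedge c-b\wedge c\le a-b$ for $a\ge b$, one gets directly
\[
\bbE\big[\crochet{X}_{\overline\sigma^n_j}-\crochet{X}_{\underline\sigma^n_j}\,\big|\,\calF_{\tau^n_j}\big]\ \le\ \bbE[\overline\rho-\underline\rho\mid\calF_{\tau^n_j}]\ =\ 4\beta_n'\varepsilon_n^2\,\dl_{\tau^n_j}\ul_{\tau^n_j},
\]
so the cap only \emph{helps} and there is nothing to correct. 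In particular, the split on the random event $\{\overline\sigma^n_j=T_0\}$, which is not $\calF_{\tau^n_j}$-measurable, can be dropped.
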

\begin{proof}
 Since the sequence $\varepsilon_n^2 N^n_{T_0}$ is tight, it is enough to show that 
\begin{equation*}
\frac{1}{\varepsilon_n^2}\sup_{j\leq N^n_{T_0}}\bbE\big[{\tilde{\tau}^n_{j+1} -\tau^n_{j+1}} \big| \calF_{\tau^n_j}\big]\to 0.
\end{equation*}
We write 
$$\frac{1}{\varepsilon_n^2}\sup_{j\leq N^n_{T_0}}\bbE\big[{\tilde{\tau}^n_{j+1} -\tau^n_{j+1}} \big| \calF_{\tau^n_j}\big]=R_1+R_2,$$
with
\begin{align*}
R_1&=\frac{1}{\varepsilon_n^2}\sup_{j\leq N^n_{T_0}}\bbE\big[({\tilde{\tau}^n_{j+1} -\tau^n_{j+1}})\mathbbm{1}_{\{\tilde{\tau}^n_{j+1}\vee \tau^n_{j+1}\geq\tau^n_j + \varepsilon_n\}}  \big| \calF_{\tau^n_j}\big],\\
R_2&=\frac{1}{\varepsilon_n^2}\sup_{j\leq N^n_{T_0}}\bbE\big[({\tilde{\tau}^n_{j+1} -\tau^n_{j+1}})\mathbbm{1}_{\{\tilde{\tau}^n_{j+1}\vee \tau^n_{j+1}< \tau^n_j + \varepsilon_n\}}  \big| \calF_{\tau^n_j}\big].
\end{align*}
We first treat $R_1$. We have 
$$R_1\leq \frac{T}{\varepsilon_n^2}\sup_{j\leq N^n_{T_0}} \mathbb{P}\big[\tilde \tau^n_{j+1}\vee\tau^n_{j+1}\geq \tau^n_j+ \varepsilon_n\big| \calF_{\tau^n_j}\big].$$
Since $\ul$, $\dl$ and $\sigma^X$ are bounded from below by $1/K$, using the Dambis, Dubins-Schwartz theorem we get that there exists
some $C>0$ such that
\begin{equation*}
\mathbb{P}\big[\tilde\tau^n_{j+1}\vee \tau^n_{j+1}\geq \tau^n_j+\varepsilon_n\big| \calF_{\tau^n_j}\big]\leq 
\mathbb{P}[\rho^n\geq C\varepsilon_n], 
\end{equation*}
with $\rho^n$ the first exit time of $[-\varepsilon_n/K,\varepsilon_n/K]$ by a Brownian motion starting from zero. Using the well-known bound $\bbE[(\rho^n)^k]\leq C\varepsilon_n^{2k}$ for $k\in\mathbb{N}$, Markov's inequality gives the convergence to zero of $R_1$.\\

\noindent We now turn to $R_2$. Recall that $w_{T_0}(\varepsilon_n) \leq K \bbE[w_{T_0}(\varepsilon_n)]=\delta_n \to 0$. Then, we have
$$({\tilde{\tau}^n_{j+1} -\tau^n_{j+1}} ) \mathbbm{1}_ {\{\tilde{\tau}^n_{j+1}\vee \tau^n_{j+1}< \tau^n_j + \varepsilon_n\}}\leq \hat{J}^n_{j+1}-\check{J}^n_{j+1},$$ with
\begin{align*}
\hat{J}^n_{j+1}&=\inf\Big\{t\geq \tau^n_j ; X_{\tau^n_j+t}- X_{\tau^n_j}\notin\big(-\varepsilon_n(\dl_{\tau^n_j}+\delta_n),\varepsilon_n(\ul_{\tau^n_j}+\delta_n)\big)\Big\},\\
\check{J}^n_{j+1}&=\inf\Big\{t\geq \tau^n_j ;X_{\tau^n_j+t}- X_{\tau^n_j}\notin\big(-\varepsilon_n(\dl_{\tau^n_j}-\delta_n),\varepsilon_n(\ul_{\tau^n_j}-\delta_n)\big)\Big\}.
\end{align*}
Using again the Dambis, Dubins-Schwarz theorem and the various boundedness assumptions, we get
$$\bbE\Big[\bbE\big[\hat{J}^n_{j+1} - \check{J}^n_{j+1}\big|\calF_{\check{J}^n_{j+1}}\big]\big|\calF_{\tau^n_j}\Big]\leq C\varepsilon_n^2 \delta_n.$$
Consequently, 
$$\bbE[R_2]\leq C\delta_n,$$
which gives the result.
\end{proof}
\subsubsection*{Part 3: Proof of \eqref{asmp: suff}}
Here we prove \eqref{asmp: suff}, which completes the proof of the convergence in law of $\varepsilon_n^{-1}Z^n_T$ with the help of Proposition~\ref{proposition: stable conv} and Proposition~\ref{prop: intuition a s}.
As already seen, by It$\hat{\text{o}}$'s formula, we have 
\begin{equation*}
\begin{split}
 &  \bbE[\Delta_{j,n}^4|\calF_{\tau^n_j}] = 
6 \bbE\big[
\int_{\tau^n_j}^{\tau^n_j+1} (X_t-X_{\tau^n_j})^2d\crochet{X}_t
\big| \calF_{\tau^n_j}
\big] = A_j, \\
& \bbE[\Delta_{j,n}^3|\calF_{\tau^n_j}] = 
3 \bbE\big[
\int_{\tau^n_j}^{\tau^n_j+1} (X_t-X_{\tau^n_j})d\crochet{X}_t \big| \calF_{\tau^n_j}
\big] = B_j.
\end{split}
\end{equation*}
Therefore, we obtain
\begin{equation*}
\begin{split}
&\varepsilon_n^{-2}\sum_{j=0}^{N^n_t}\kappa_{\tau^n_j}A_j
= \varepsilon_n^{-2}\sum_{j=0}^{N^n_t}\kappa_{\tau^n_j}
\bbE\big[(X_{\tilde{\tau}^n_{j+1}} - X_{\tau^n_j})^4| \calF_{\tau^n_j}\big]
+ R_t
 \\ & 
\varepsilon_n^{-1}\sum_{j=0}^{N^n_t}\kappa_{\tau^n_j}B_j
= \varepsilon_n^{-1}\sum_{j=0}^{N^n_t}\kappa_{\tau^n_j}
\bbE\big[(X_{\tilde{\tau}^n_{j+1}} - X_{\tau^n_j})^3| \calF_{\tau^n_j}\big]
+ R^\prime_t
\end{split}
\end{equation*}
where
\begin{equation*}
\begin{split}
& R_t  =
6 \varepsilon_n^{-2}\sum_{j=0}^{N^n_t}\kappa_{\tau^n_j} 
 \bbE\Big[\int_{\tilde{\tau}^n_{j+1}}^{\tau^n_{j+1}}(X_u-X^n_u)^2(\sigma^X_u)^2du \big| \calF_{\tau^n_j}\Big], \\
& R^\prime_t =3 
\varepsilon_n^{-1}\sum_{j=0}^{N^n_t}\kappa_{\tau^n_j} \bbE\Big[\int_{\tilde{\tau}^n_{j+1}}^{\tau^n_{j+1}}(X_u-X^n_u)(\sigma^X_u)^2du \big| \calF_{\tau^n_j}\Big].
\end{split}
\end{equation*}
Since $\varepsilon_n^{-1}\sup_t|X_t-X^n_t| \leq K$ and $\sigma^X \leq K$, $R$ and $R^\prime$ converge to $0$ uniformly in probability on $[0,T_0]$
by Lemma~\ref{key of lem}.
Using that for $b_1>0$ and $b_2>0$, the probability that a Brownian motion starting from zero hits level $b_1$ before level $-b_2$ is equal to $b_2/(b_2+b_1)$, we get
\begin{equation*}
\varepsilon_n^{-2} \frac{\bbE\big[(X_{\tilde{\tau}^n_{j+1}} - X_{\tau^n_j})^4\big| \calF_{\tau^n_j}\big]}{\bbE\big[(X_{\tilde{\tau}^n_{j+1}} - X_{\tau^n_j})^2\big| \calF_{\tau^n_j}\big]}= a^2_{\tau^n_j},
\ \ 
 \varepsilon_n^{-1} \frac{\bbE\big[(X_{\tilde{\tau}^n_{j+1}} - X_{\tau^n_j})^3\big| \calF_{\tau^n_j}\big]}{\bbE\big[(X_{\tilde{\tau}^n_{j+1}} - X_{\tau^n_j})^2\big| \calF_{\tau^n_j}\big]}= -s_{\tau^n_j},
\end{equation*}
where
\begin{equation*}
a^2=\ul^2 + \dl^2- \ul\dl, \ \ 
s = \dl- \ul. 
\end{equation*}
Then, to complete the proof, it suffices to show that the convergences
\begin{equation*}
 \begin{split}
& \sum_{j=0}^{N^n_\cdot}\kappa_{\tau^n_j}a^2_{\tau^n_j}\bbE\big[(X_{\tilde{\tau}^n_{j+1}} - X_{\tau^n_j})^2| \calF_{\tau^n_j}\big]
\to \int_0^\cdot a_u^2 (\sigma^Y_u)^2du, \\
& \sum_{j=0}^{N^n_\cdot}\kappa_{\tau^n_j}s_{\tau^n_j}\bbE\big[(X_{\tilde{\tau}^n_{j+1}} - X_{\tau^n_j})^2| \calF_{\tau^n_j}\big]
\to \int_0^\cdot s_u (\sigma^Y_u)^2du
 \end{split}
\end{equation*}
hold uniformy in probability on $[0,T_0]$. This follows from Lemma~A.4 in \cite{fukasawa2011discretization} 
together with Lemma~\ref{lemma: conv diff qv}. 

\subsubsection{Proof of \eqref{moment2}} 
Here we prove a moment convergence result. Thus the localization procedure does not apply here. 
We set
\begin{align*}
A_n &= \varepsilon_n^{-1}\int_0^T(X^n_t - X_t)b^Y_t dt,\\
B_n &= \varepsilon_n^{-1}\int_0^T(X^n_t - X_t)\sigma^Y_t dW^Y_t.
\end{align*}
We have 
$$(\varepsilon_n^{-1}{\eps}^n_T)^2 = (A_n+ B_n)^2 \leq 2( A_n^2 + B_n^2). $$
Thus it is enough to prove the uniform integrability of $(A_n^2)$ and $ (B_n^2)$ to obtain the result. For $(A_n^2)$, we have
$$\underset{n}{\text{sup}}(A_n)^2 \leq \big(\int_0^T (\ul_t\vee\dl_t)\abs{b^Y_t}dt\big)^2 \leq \int_0^T(\ul_t\vee\dl_t)^2(\rho_t)^2 (\sigma^Y_t)^2dt.$$
The right hand side of the last inequality being integrable, this gives the result for $(A_n)^2$. We now turn to $(B_n^2)$. The sequence $(B_n^2)$ is non negative integrable and converges in law towards an integrable limit. Thus the uniform integrability is equivalent to the convergence in expectation, see for example \cite{billingsley2009convergence}. Since
\begin{equation*}
\varepsilon_n^{-2}\crochet{Z^n}_T\to \frac{1}{6}\int_0^Ta_t^2(\sigma^Y_t)^2dt
\end{equation*}
and
$$\varepsilon_n^{-2}\crochet{Z^n}_T\leq \int_0^T(\ul_t\vee \dl_t)^2(\sigma^Y_t)^2dt,$$
we readily obtain
\begin{equation*}
\bbE[B_n^2]= \bbE[\varepsilon_n^{-2}\crochet{Z^n}_T]\to \frac{1}{6}\bbE[\int_0^T a_t^2(\sigma^Y_t)^2dt],
\end{equation*} 
which concludes the proof.
\subsection{Proof of Theorem \ref{theo : main result}}
We start with the first part of Theorem \ref{theo : main result}. Let $(m,v)$ be a non dominated couple. Suppose it is a dominated couple with respect to $\mathcal{Z}_T$. This means there exists a process $s_t^*$ such
that the associated expectation, say $m'=\bbE[Z_{T,s^*}]$, is larger than $m$ and the expected error, say $v'=\bbE[(Z_{T,s^*})^2]$, is strictly smaller than $v$. From Lemma \ref{lem : hitting strat univ}, for any $\eta$ we can find an admissible strategy with limiting variable $Z^*_{s^*+\eta,s^*}$. Clearly, we can find $\eta$ small enough, such that 
$\bbE|Z^*_{s^*+\eta,s^*}]=m'$ and  
$$v'\leq \bbE[(Z^*_{s^*+\eta,s^*})^2]<v.$$
Consequently $(m,v)$ is a dominated couple, which is absurd. Conversely, any point which is non dominated with respect to $\mathcal{Z}_T$ is non dominated since $a_t^2\geq s_t^2$.\\

\noindent For the second part, it remains to show that the proposed discretization rules indeed lead to nearly efficient couples. The fact that they are admissible is clear from Proposition \ref{prop : hitting strats}. Recall now that for the suggested rule
$$a_t^2=(s_t^*)^2+\frac{6\delta}{(\sigma^Y_t)^2}.$$
This equality
gives that the limiting variable $Z^*_{a,s}$ associated to this discretization rule satisfies
$$\bbE[Z^*_{a,s}]= \frac{1}{3}\bbE\big[\int_0^Ts_t^*dY_t\big]$$
and
\begin{align*}
\bbE[(Z^*_{a,s})^2]&=\frac{1}{9}\bbE\big[\big(\int_0^Ts_t^*dY_t\big)^2\big]+\frac{1}{6}\bbE \big[\int_0^T\big(a_t^2-\frac{2}{3}(s_t^*)^2\big)(\sigma^Y_t)^2dt\big]\\
&=\frac{1}{9}\bbE\big[\big(\int_0^Ts_t^*dY_t\big)^2\big]+\frac{1}{18}\bbE \big[\int_0^T\big((s_t^*)^2(\sigma^Y_t)^2\big)dt\big]+\delta T.
\end{align*}
The couple
$$\Big(\frac{1}{3}\bbE\big[\int_0^Ts_t^*dY_t\big],\frac{1}{9}\bbE\big[\big(\int_0^Ts_t^*dY_t\big)^2\big]+\frac{1}{18}\bbE \big[\int_0^T\big(\frac{1}{2}(s_t^*)^2(\sigma^Y_t)^2\big)dt\big]\Big)$$
being non dominated, we obtain the result.

%

\section{Linear-quadratic optimal control}\label{append: LQ}
We give here a summary of useful formulas from \cite{zhou2000continuous}. Consider a controlled system governed by the following linear SDE:
\begin{equation}\label{LQ: dynm}
\begin{cases}
dX_t = (A_tX_t + B_tu_t + f_t)dt + \sum_{j=1}^mD^j_tu_tdW^j_t,\\
X_0 = x \in \bbR^n,
\end{cases}
\end{equation}
where $x$ is the initial state and $W =(W^1, \cdots, W^m)$ is a $m$-dimensional Brownian motion on a given filtered probability space $(\Omega, \calF, \bbP, \paren{\calF_t}_{t\geq 0})$ and $u\in L^2_{\calF}([0, T], \bbR^m)$ is a control. For each control $u$, the associated cost is 
\begin{equation}\label{LQ: cost}
J(u) = \E{\int_0^T\frac{1}{2}\paren{X_t'Q_tX_t + u'_tR_tu_t}dt + \frac{1}{2}X_T'HX_T}.
\end{equation}
We suppose that all the parameters are deterministic and continuous on $[0, T]$ and $H$ belongs to $S^n_+$ the set of $n\times n$ symmetric positive matrices. We introduce the following matrix Riccati equation
 \begin{equation}\label{eqn: riccati}
 \begin{cases}
\dot{P}_t = -P_tA_t -A_t'P_t -Q_t +P_tB_tK_t^{-1}B_t'P_t,\\
P_T = H,\\
K_t = R_t + \sum_{j=1}^{m}D^{j'}_tP_tD^j_t > 0, \quad \forall t\in [0, T],
 \end{cases}
 \end{equation}
along with an equation
\begin{equation}\label{eqn: g}
\begin{cases}
\dot{g}_t = -A_t'g_t + P_tB_tK_t^{-1}B_t'g_t-P_tf_t,\\
g_T = 0.
\end{cases}
\end{equation} 
Then following result is given in \cite{zhou2000continuous}.
\begin{theo}\label{theoxyz}
If \eqref{eqn: riccati} and \eqref{eqn: g} admit solutions $P\in C([0,
T], S^n_+)$ and $g\in C([0, T], \bbR^n)$ respectively, then the
stochastic linear-quadratic control problem \eqref{LQ: dynm}-\eqref{LQ: cost} has an optimal feedback control
\begin{equation*}
u^*(t, x)=-K_t^{-1}B'_t(P_tX_t+g_t). 
\end{equation*}
Moreover, the optimal cost value is 
\begin{equation*}
J^* = \frac{1}{2}\int_{0}^{T}\paren{2f_t'g_t - g_tB_tK^{-1}_tB_t'g_t}dt + \frac{1}{2}x'P_0x + xg_0.
\end{equation*}
\end{theo}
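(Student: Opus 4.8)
\noindent The plan is to establish the theorem by the classical verification argument based on completion of squares, which yields both the optimal feedback and the optimal cost simultaneously. Fix an admissible control $u \in L^2_{\calF}([0,T],\bbR^m)$ and let $X$ solve \eqref{LQ: dynm}, with drift $\mu_t = A_tX_t + B_tu_t + f_t$ and diffusion $\sum_{j=1}^m D^j_tu_t\,dW^j_t$. The key object is the auxiliary process
\begin{equation*}
V_t = \tfrac12 X_t'P_tX_t + g_t'X_t,
\end{equation*}
where $P$ and $g$ solve \eqref{eqn: riccati} and \eqref{eqn: g}. First I would apply It\^o's formula to $V_t$: its drift is the sum of $\tfrac12 X_t'\dot P_tX_t$, $(X_t'P_t + g_t')\mu_t$, $(\dot g_t)'X_t$ and the quadratic-variation contribution $\tfrac12 u_t'\big(\sum_{j=1}^m D^{j\prime}_tP_tD^j_t\big)u_t$, while its martingale part is $\int_0^\cdot \sum_{j=1}^m (X_t'P_t + g_t')D^j_tu_t\,dW^j_t$.

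\noindent The heart of the argument is the algebraic identity obtained by adding the running-cost integrand $\tfrac12(X_t'Q_tX_t + u_t'R_tu_t)$ to this drift. Using the Riccati equation \eqref{eqn: riccati} to replace $\dot P_t + P_tA_t + A_t'P_t + Q_t$ by $P_tB_tK_t^{-1}B_t'P_t$, using \eqref{eqn: g} to cancel the $X_t$-linear terms coming from $P_tf_t$ and $A_t'g_t$, collecting the $u_t$-quadratic terms into $\tfrac12 u_t'K_tu_t$, and completing the square in $u_t$ (legitimate because $K_t \succ 0$), one finds
\begin{equation*}
\tfrac12(X_t'Q_tX_t + u_t'R_tu_t) + (\text{drift of }V)_t = \tfrac12(u_t - u^*_t)'K_t(u_t - u^*_t) + \tfrac12\big(2f_t'g_t - g_t'B_tK_t^{-1}B_t'g_t\big),
\end{equation*}
with $u^*_t = -K_t^{-1}B_t'(P_tX_t + g_t)$. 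I would then integrate from $0$ to $T$, take expectations so that the martingale term drops out, and use $P_T = H$, $g_T = 0$, which give $\E{V_T} = \tfrac12\E{X_T'HX_T}$ and $V_0 = \tfrac12 x'P_0x + g_0'x$; this produces
\begin{equation*}
J(u) = \tfrac12 x'P_0x + g_0'x + \tfrac12\int_0^T\big(2f_t'g_t - g_t'B_tK_t^{-1}B_t'g_t\big)dt + \tfrac12\,\E{\int_0^T (u_t - u^*_t)'K_t(u_t - u^*_t)\,dt}.
\end{equation*}
Since $K_t \succ 0$, the last term is nonnegative and vanishes precisely when $u_t = u^*_t$ almost everywhere, which identifies $J^*$ and shows $u^*$ is optimal.

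\noindent Two points must be checked to make this rigorous. First, the stochastic integral above is a priori only a local martingale; since all parameters are continuous on the compact interval $[0,T]$ and hence bounded, $P \in C([0,T],S^n_+)$, $g \in C([0,T],\bbR^n)$, and $u, X \in L^2$, its integrand belongs to $L^2(dt\otimes d\bbP)$, so it is a true martingale with zero expectation. Second, one must verify that the feedback is admissible: substituting $u^*_t = -K_t^{-1}B_t'(P_tX_t + g_t)$ into \eqref{LQ: dynm} gives a linear SDE in $X$ whose coefficients are bounded on $[0,T]$ (using continuity of $P$, $g$ and $K^{-1}$), hence it has a unique strong solution with $\E{\sup_{t \le T}|X_t|^2} < \infty$; therefore $u^* \in L^2_{\calF}([0,T],\bbR^m)$ and the infimum is attained.

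\noindent The main obstacle I expect is the bookkeeping in the It\^o expansion: keeping careful track of matrix transposes (both $X_t'P_tB_tu_t$ and its transpose occur) and checking that \eqref{eqn: riccati} and \eqref{eqn: g} are exactly the identities that annihilate all $X_t$-dependence and produce the claimed perfect square. The remaining, genuinely analytic, issue is the justification of the martingale property and of the admissibility of $u^*$, which is where the continuity --- hence boundedness on $[0,T]$ --- of all the data and of $P$ and $g$ is used.
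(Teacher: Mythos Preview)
Your argument is the standard completion-of-squares verification, and it is correct; the identity you write after adding the running cost is precisely what \eqref{eqn: riccati} and \eqref{eqn: g} are designed to produce, and your checks on the martingale property and on admissibility of the feedback $u^*$ are the right ones. Note, however, that the paper does not give its own proof of this theorem: Appendix~\ref{append: LQ} is explicitly a summary of results from \cite{zhou2000continuous}, and Theorem~\ref{theoxyz} is stated without proof as a citation. So there is nothing to compare against beyond the observation that your approach is exactly the classical one used in that reference.
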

\end{appendices}

\subsection*{Acknowledgements}

We thank Philippe Amzelek and Joe Bonnaud from BNP-Paribas for inspiring discussions.

\bibliographystyle{abbrv}
\bibliography{biblio_cfrt}
\end{document}